\newcommand{\X}{\mathcal{X}}
\newcommand{\F}{\Delta (G)} 
\newcommand{\z}{y}
\newcommand{\y}{z}
\newtheorem{theorem}{\sc Theorem}[section]
\newtheorem{lemma}[theorem]{\sc Lemma}
\newtheorem{proposition}[theorem]{\sc Proposition}
\newtheorem{corollary}[theorem]{\sc Corollary}
\newcommand{\Y}{\mathcal{Y}}
\begin{document}

\title[Restricted centralizers]{Profinite groups with restricted centralizers of commutators}

\author[E. Detomi, M. Morigi, P. Shumyatsky]
{\textbf{Eloisa Detomi}
\\
Dipartimento di Matematica, Universit\`a di Padova\\
 Via Trieste 63, 35121 Padova, Italy\\
(detomi@math.unipd.it)\\
\textbf{Marta Morigi}\\
Dipartimento di Matematica, Universit\`a di Bologna\\
Piazza di Porta San Donato 5, 40126 Bologna, Italy
(marta.morigi@unibo.it)\\
\textbf{Pavel Shumyatsky}\\
Department of Mathematics, University of Brasilia\\
Brasilia-DF, 70910-900 Brazil
(pavel@unb.br)\\
}

 \begin{abstract} 
A group $G$  has restricted centralizers if for each $g$ in $G$ the centralizer $C_G(g)$ either is finite or has finite index in $G.$
 A theorem of Shalev states that a profinite group with restricted centralizers is abelian-by-finite. In the present article we handle profinite groups with restricted centralizers of word-values. We show that if $w$ is a multilinear commutator word and $G$ a profinite group with restricted centralizers of $w$-values, then the verbal subgroup $w(G)$ is abelian-by-finite.
\medskip

{\noindent {\it Keywords:} group words, profinite groups, centralizers, FC-groups
\medskip

\noindent {\it 2010 Mathematics subject classification:} Primary 20F24,\\ Secondary 20E18, 20F12}

 \end{abstract}

\maketitle
{\centering
{\it
 Dedicated to Aner Shalev on the occasion  of his $60$th birthday.\par}}

\section{Introduction} 
A group $G$ is said to have restricted 
centralizers if for each $g$ in $G$ the centralizer $C_G(g)$  either is finite or has finite index in $G$. 
This notion was introduced by Shalev in \cite{shalev} where he showed that a profinite group with restricted centralizers is finite-by-abelian-by-finite. Note that a finite-by-abelian profinite group is necessarily abelian-by-finite so Shalev's theorem essentially states that a profinite group with   restricted centralizers is 
abelian-by-finite.

In the present article we handle profinite groups with  restricted centralizers of word-values.
 Given a word $w$ and a  group $G$, we denote by $G_w$ the set of all values of $w$ in $G$ and by $w(G)$ the subgroup generated by $G_w$. In the case where $G$ is a profinite group $w(G)$ denotes the subgroup topologically generated by $G_w$. 
 
 Recall that 
 multilinear commutator words are words which are obtained by nesting commutators, but using always different variables.
 Such words are also known under the name of outer commutator words and are precisely the words that can be written in the form of multilinear Lie monomials.
 
 The main purpose of this paper is to prove the following theorem. 

\begin{theorem}\label{main}
Let $w$ be a multilinear commutator word and $G$ a profinite group 
 in which all centralizers of $w$-values 
 are either finite or open. Then $w(G)$ is abelian-by-finite. 
\end{theorem}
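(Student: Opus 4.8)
My plan is to reduce the statement to Shalev's theorem, by treating separately the $w$-values with finite centralizer. Write $K=w(G)$; then $K$ is a profinite group, topologically generated by the compact, conjugation-invariant set $G_w$ of $w$-values, and by hypothesis each $v\in G_w$ has centralizer $C_K(v)=C_G(v)\cap K$ which is finite or open in $K$. It is enough to prove that $K$ is abelian-by-finite. The hypothesis yields a dichotomy: \emph{either} every $w$-value has open centralizer in $G$ (so $G_w$ lies in the $FC$-centre $FC(G)$), \emph{or} some $w$-value $d$ has finite centralizer in $G$.

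In the first case the plan is to show that $w(G)$ is finite-by-abelian, which by the remark in the Introduction makes it abelian-by-finite. This is a ``verbal $FC$'' phenomenon: for a multilinear commutator word $w$, a profinite group all of whose $w$-values have finitely many conjugates has $(w(G))'$ finite. The proof I have in mind uses the compactness of $G_w$ to pass from ``each $w$-value has finitely many conjugates'' to ``the $w$-values have boundedly many conjugates'', and then applies the verbal form (for multilinear commutator words) of B.H. Neumann's theorem on groups with bounded conjugacy classes.

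In the second case --- the substantial one --- fix a $w$-value $d$ with $C_G(d)$ finite. Since $Z(K)\le C_K(d)\le C_G(d)$, the centre of $K=w(G)$ is finite. The plan is now to upgrade the centralizer restriction from the generating set $G_w$ to the whole of $K$: to prove that \emph{every} element of $K$ has finite or open centralizer. Once this is known, Shalev's theorem applies to $K$ and yields the conclusion. It may help to organise this by factoring out the closed normal subgroup generated by the $w$-values with finite centralizer, but in any case the crux is the upgrading just described.

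The main obstacle is precisely this upgrading. A product --- let alone a limit of products --- of $w$-values need not be a $w$-value, so the centralizer condition does not transfer formally; one must show that an element of $K$ whose centralizer is neither finite nor open would force a $w$-value with the same defect. To do so I would exploit the conjugation-invariance and generating property of the set of $w$-values with finite centralizer, the finiteness of $Z(K)$, the multilinearity of $w$, and features of multilinear commutator words such as the inclusion $\delta_c(H)\le w(H)$ (valid for every group $H$, with $c$ depending only on $w$), together with Shalev's theorem itself. I expect essentially all of the real difficulty to reside in this step.
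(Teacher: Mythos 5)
There are genuine gaps in both branches of your dichotomy. In the first branch (every $w$-value is an FC-element), the step ``use compactness of $G_w$ to pass from finitely many conjugates to boundedly many conjugates'' does not work. The Baire category argument applied to the closed sets $\{(g_1,\dots,g_n): |w(g_1,\dots,g_n)^G|\le j\}$ only produces one coset $a_1T\times\cdots\times a_nT$ of an open subgroup on which the conjugacy classes are bounded; this bounds the classes of the $w$-values of $T$ (Lemma \ref{comb1}), but a general $w$-value $w(g_1,\dots,g_n)$ has arguments ranging over all of $G$, and when you write $g_i=c_it_i$ with $c_i$ in a transversal of $T$, the representatives $c_i$ need not be FC-elements, so no bound propagates. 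The true statement you want --- that $w(G)'$ is finite when all $w$-values are FC --- is Corollary \ref{profinite-FC} of the paper, and proving it is the entire content of Section \ref{sec:FC}: a non-quantitative argument via the $w^*$-residual and an induction over subsets $I\subseteq\{1,\dots,n\}$ (Propositions \ref{X} and \ref{inductive-step}), not a reduction to the BFC theorem of \cite{DMS-BFC}.

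In the second branch you have correctly identified where the difficulty lies, but you have not resolved it, and the ``upgrading'' you propose (every element of $K=w(G)$ has finite or open centralizer) is not what the paper proves and is not a soft consequence of the hypothesis: an abelian-by-finite profinite group need not have restricted centralizers (e.g.\ $\mathbb{Z}_p^{\mathbb{N}}$ extended by an involution inverting half the coordinates), so there is no formal reason the property should pass from the generating set $G_w$ to $K$. The paper instead proceeds as follows: (a) if some $w$-value has infinite order, its centralizer is open and every $w$-value inside an open normal subgroup of that centralizer is automatically FC, so the Section \ref{sec:FC} machinery together with conciseness finishes (Corollary \ref{infinite}); (b) otherwise all $w$-values are torsion and one proves $w(G)$ is locally finite, which requires Zelmanov's nilpotency theorem for the associated Lie algebra (Section \ref{sec:pro-p}) and the Khukhro--Shumyatsky results (Section \ref{sec:locfin}); (c) Wilson's structure theorem for compact torsion groups then gives a characteristic series whose non-pro-$p$ factors are Cartesian products of finite simple groups, and it is only to these factors --- where, by Ore's conjecture, \emph{every} element is a $w$-value --- that Shalev's theorem is applied, forcing them to be finite; (d) the proof concludes with a Hirsch--Plotkin series, Lemmas \ref{114} and \ref{KK}, and another application of Theorem \ref{genN}. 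Your proposal contains none of steps (b)--(d), which is where essentially all of the work (including the use of the classification via Wilson's theorem and Ore's conjecture) resides.
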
 
 
From the above theorem we can deduce the following results.

\begin{corollary}\label{openT} 
Under the hypothesis of Theorem \ref{main}, the group $G$ has an open subgroup $T$ such that $w(T)$ is abelian. 
 In particular $G$ is soluble-by-finite. 
\end{corollary}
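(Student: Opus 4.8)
The plan is to deduce the corollary from Theorem \ref{main} by a short topological argument, together with the classical fact that a group satisfying a multilinear commutator law is soluble.

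First I would invoke Theorem \ref{main}: the closed subgroup $w(G)$ of the profinite group $G$ has an abelian subgroup of finite index. Replacing it by its topological closure does no harm --- since the commutator map is continuous, the closure of an abelian subgroup is again abelian --- so we may fix a closed abelian subgroup $A\le w(G)$ of finite index in $w(G)$. A closed subgroup of finite index in a profinite group is open, hence $A$ is open in $w(G)$ for the topology induced from $G$, and therefore $A$ contains $N\cap w(G)$ for some open subgroup $N$ of $G$ (here I use that the open subgroups of $G$ form a base of neighbourhoods of $1$). Put $T:=N$.

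Next I would verify that $T$ works. If $t_1,\dots,t_n\in T$ then $w(t_1,\dots,t_n)$ is a product of the $t_i^{\pm1}$ and so lies in $T$; it also lies in $G_w\subseteq w(G)$. Hence every $w$-value of $T$ lies in $T\cap w(G)\le A$, and since $A$ is closed the closed subgroup $w(T)$ that these values topologically generate satisfies $w(T)\le A$. Thus $w(T)$ is abelian, which is the first assertion (replacing $T$ by its core in $G$, one may in addition take the open subgroup to be normal). For the last claim, $T/w(T)$ satisfies the law $w\equiv1$; recalling the well-known fact that for a multilinear commutator word $w$ of weight $n$ every group $H$ with $w(H)=1$ is soluble --- indeed $H^{(n-1)}\le w(H)$ always --- we obtain $T^{(n-1)}\le w(T)$, and since $w(T)$ is abelian $T$ is soluble. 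As $T$ has finite index in $G$, it follows that $G$ is soluble-by-finite.

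I do not expect a genuine obstacle here, since the substance of the matter is already contained in Theorem \ref{main}. The only points needing care are the upgrade of an arbitrary finite-index abelian subgroup of $w(G)$ to an \emph{open} one (which is why the closure is taken at the outset) and the descent from $w(G)$ to an open subgroup of $G$ (handled by the neighbourhood-base property); the solubility input at the end is entirely standard.
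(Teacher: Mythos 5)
Your proof is correct and follows essentially the same route as the paper's: both deduce from Theorem \ref{main} an open abelian subgroup of $w(G)$, find an open subgroup $T$ of $G$ with $T\cap w(G)$ contained in it (so that $w(T)$ is abelian), and obtain solubility from the containment of a term of the derived series in $w(T)$ via Lemma \ref{lem:delta_k}. The only cosmetic difference is in how $T$ is produced --- the paper takes the abelian subgroup characteristic and chooses an open normal $T$ avoiding the finite quotient $w(G)/N$, whereas you use that open subgroups of $G$ form a neighbourhood base of the identity; both work.
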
 

\begin{corollary}\label{profinite-finite} 
 Let $w$ be a multilinear 
commutator word and $G$  a profinite group in which every nontrivial $w$-value has finite centralizer.  Then either $w(G)=1$ or $G$ is finite. 
\end{corollary}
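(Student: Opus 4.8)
The plan is to reduce everything to the claim that, under the hypotheses, $w(G)$ must be finite; the corollary then follows at once. Indeed, if $w(G)$ is finite then $G/C_G(w(G))$ embeds in the finite group $\mathrm{Aut}(w(G))$, so $C_G(w(G))$ is open in $G$; choosing any nontrivial $w$-value $g\in w(G)$ (one exists unless $w(G)=1$) we have $C_G(w(G))\le C_G(g)$, and the latter is finite by hypothesis, hence $G$ itself is finite. So assume $w(G)\ne1$ and, for contradiction, that $G$ is infinite. By Theorem \ref{main} the group $w(G)$ is abelian-by-finite, and by the reduction just made we may assume $w(G)$ is infinite.

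An infinite abelian-by-finite profinite group has an infinite abelian open subgroup; taking the closure of the core of one such subgroup, and then intersecting the finitely many open normal abelian subgroups of $w(G)$ of that bounded index, we obtain an infinite abelian subgroup $A$ that is characteristic in $w(G)$, hence (since $G$ acts on $w(G)$ by conjugation) normal in $G$, with $A\le w(G)$. Now for every $a\in A$ we have $C_G(a)\supseteq A$, so $C_G(a)$ is infinite; by the hypothesis $a$ is therefore not a nontrivial $w$-value, i.e. $A\cap G_w=\{1\}$. Since $w$ is a multilinear commutator word and $A$ is normal in $G$, any $w$-value with at least one argument in $A$ already lies in $A$, hence is trivial; thus $w(g_1,\dots,g_k)=1$ whenever some $g_i\in A$.

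The heart of the argument is to promote this vanishing to the assertion $A\le C_G(w(G))$. When $w=\gamma_c$ is a lower central word this is immediate: the vanishing gives $[A,{}_{c-1}G]=1$, i.e. $A\le Z_{c-1}(G)$, and since $[Z_{c-1}(G),\gamma_c(G)]=1$ while $w(G)\le\gamma_c(G)$, we conclude $A\le C_G(w(G))$. For a general multilinear commutator $w$ one argues via the standard expansion of outer commutator words: for $a\in A$ and a $w$-value $v=w(g_1,\dots,g_k)$ we have $v^a=w(g_1[g_1,a],\dots,g_k[g_k,a])$ with each $[g_i,a]\in[G,A]\le A$, and expanding the right-hand side as a product of conjugates of $w$-values $w(c_1,\dots,c_k)$ with $c_i\in\{g_i,[g_i,a]\}$, every factor other than $v$ itself has an argument in $A$ and so is trivial; hence $v^a=v^{\rho}$ with $\rho\in[G,A]$, i.e. conjugation by $a$ fixes $v$ modulo $[G,A]$. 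Since $[G,A]\le A$ and $A\cap G_w=\{1\}$, the subgroup $[G,A]$ inherits all the properties of $A$, so this step iterates along $A\ge[A,G]\ge[A,G,G]\ge\cdots$: if some term of this series is finite one deduces, tracking the resulting chain of indices, that $C_G(g)$ is infinite for any nontrivial $w$-value $g$, and if no term is finite a compactness argument on the intersection of the series yields the same conclusion. In either case $A\le C_G(w(G))$ (or else we already have our contradiction).

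Finally, once $A\le C_G(w(G))$ is known, pick any nontrivial $w$-value $g\in w(G)$: then $A\le C_G(w(G))\le C_G(g)$, so $C_G(g)$ is infinite, contradicting the hypothesis. Hence $w(G)$ is finite and $G$ is finite, as required; in particular the "either/or'' dichotomy holds. I expect the main obstacle to be precisely the promotion step for general multilinear commutator words: the subgroup $[G,A]$ need not be finite, so one must run the descent along $[A,{}_nG]$ carefully and handle the limiting case by compactness, using both the normality of $A$ and the outer-commutator structure of $w$ in an essential way. The rest of the argument is routine given Theorem \ref{main}.
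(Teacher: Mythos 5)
Your overall strategy coincides with the paper's: use Theorem \ref{main} to produce an open characteristic abelian subgroup $A$ of $w(G)$, observe that $A\cap G_w=1$ (otherwise a nontrivial $w$-value would have the infinite group $A$ inside its centralizer), show that $A$ centralizes $w(G)$, hence lies in the finite centralizer of some nontrivial $w$-value, so that $w(G)$ and then $G$ are finite. The genuine gap is in what you call the promotion step. Your expansion only yields $v^a=v^{\rho}$ with $\rho\in[G,A]$, and the ensuing descent along $A\ge[A,G]\ge[A,G,G]\ge\cdots$ is not a proof: these terms are abstract subgroups of a profinite group and need not be closed, so the advertised ``compactness argument on the intersection of the series'' has nothing to act on, and neither branch of your case distinction (``some term finite'' versus ``no term finite'') is actually carried out. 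Moreover, the claim that $w(g_1[g_1,a],\dots,g_k[g_k,a])$ expands as a product of conjugates of the values $w(c_1,\dots,c_k)$ with $c_i\in\{g_i,[g_i,a]\}$ is not a clean identity as stated; the correct expansion also conjugates the arguments (compare Lemma \ref{2.1-conjugates}).

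The missing ingredient is exactly Lemma \ref{ts} (equivalently, Lemma \ref{M2} applied with $M=A$ and all $A_i=G$): since every $w$-value with at least one argument in the normal subgroup $A$ lies in $A\cap G_w=1$, one gets the \emph{exact} equality $w(g_1a_1,\dots,g_na_n)=w(g_1,\dots,g_n)$ for all $a_i\in A$; that is, $A$ lies in the marginal subgroup $w^*(G)$, which centralizes $w(G)$. In particular $v^a=w(g_1[g_1,a],\dots,g_n[g_n,a])=v$ on the nose, no conjugator $\rho$ appears, and the whole descent is unnecessary. (A smaller point: constructing $A$ by ``intersecting the finitely many open normal abelian subgroups of that bounded index'' is unjustified when $w(G)$ is not topologically finitely generated, since there may be infinitely many such subgroups; instead invoke the fact, recorded before the proof of Corollary \ref{openT}, that an abelian-by-finite group has a characteristic abelian subgroup of finite index, and pass to its closure.) With the marginal-subgroup argument substituted for your promotion step, your proof is the paper's.
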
 

The proof of Theorem \ref{main} is fairly complicated. We will now briefly describe some of the tools employed in the proof.  

 Recall that a group $G$ is an FC-group if the centralizer $C_G(g)$ has finite index in $G$ for each $g\in G$. Equivalently, $G$ is 
 an FC-group if each conjugacy class $g^G$ is finite. A group $G$ is a BFC-group if all  conjugacy classes in $G$ are finite and have bounded size. 
 A famous theorem of  B. H. Neumann says that the commutator subgroup of a BFC-group is finite  \cite{bhn}. 
 Shalev used this to show that a profinite FC-group has finite commutator subgroup  \cite{shalev}. 
 
 In Section \ref{sec:FC} we generalize Shalev's result by showing that if $w$ is a multilinear commutator word and $G$ is a profinite group in which all $w$-values are FC-elements, then $w(G)$ has finite commutator subgroup. 
 In fact, we establish a much stronger result involving the marginal subgroup introduced by P. Hall (see Section \ref{sec:FC} for details). 
 The results of Section  \ref{sec:FC} enable us to reduce Theorem \ref{main} to the case where all $w$-values have finite order. 
 
 A famous result by Zelmanov says that periodic profinite groups are locally finite \cite{z:periodic}. 
 Recall that a group is said to locally have some property if all its finitely generated subgroups have that property. 
 There is a conjecture stating  that for any word $w$ and any profinite group $G$ in which all $w$-values have finite order, 
 the verbal subgroup $w(G)$ is locally finite. The conjecture is known to be correct in a number of particular cases (see \cite{Sh01,KS, DMS-2015}). 
  In Section \ref{sec:pro-p} we obtain another result in this direction. Namely,   let $p$ be a prime, $w$ a multilinear commutator word and $G$  a profinite group in which all $w$-values have finite $p$-power order. We prove that the abstract subgroup generated by all $w$-values is locally finite.   
 
 The proof of the above result relies on the techniques created by Zelmanov in his solution of the  Restricted Burnside Problem  \cite{Z}. 
 While the result falls short of proving that $w(G)$ is  locally finite, it will be shown to be sufficient for the purposes  of the present paper. 
 Indeed, in Section \ref{sec:locfin} we prove that if  a profinite group $G$ satisfies the hypotheses of Theorem \ref{main} 
and  has all $w$-values of finite order, then $w(G)$ is locally finite. This is achieved by combining results of previous sections with the ones obtained in \cite{KS} and \cite{DMS-2015}. 
 
 In Section \ref{sec:final} we finalize the proof of Theorem \ref{main}. At this stage without loss of generality we can assume that $w(G)$ is locally finite and at least one $w$-value has finite centralizer. 
 With these assumptions, a whole range of tools (in particular, those using the classification of finite simple groups) become available. 
  We appeal to Wilson's theorem  on the structure of compact torsion groups which implies that in our situation $w(G)$    
   has a finite series of closed characteristic subgroups in which each factor 
 either is a pro-$p$ group for some prime $p$ or is isomorphic (as a topological
group) to a Cartesian product of 
 finite simple groups. 
 
 Recall that the famous Ore's conjecture, stating that every element of a nonabelian finite simple group is a commutator, was proved in \cite{lost}. It follows that for each  multilinear commutator word $w$ every element of a nonabelian  finite simple group is a $w$-value. If a group  $K$ is isomorphic to a Cartesian product of 
  nonabelian finite simple groups and has restricted centralizers of $w$-values, then actually all centralizers of elements in $K$ are either finite or of finite index and so, by Shalev's theorem \cite{shalev}, $K$ is finite. 
  We use this observation to conclude that under our assumptions the verbal subgroup is (locally soluble)-by-finite.   
  Finally, an application of the results on FC-groups obtained in Section \ref{sec:FC} completes the proof of Theorem \ref{main}. 
  
  The next section contains a collection of mostly well-known auxiliary lemmas which are used throughout the paper. In Section \ref{sec:comb} we describe combinatorial techniques developed in  \cite{GuMa, DMS-2015,DMS-revised}   for handling   multilinear commutator words. We also prove some new lemmas which are necessary for the purposes of the present article. 
  Throughout the paper, unless explicitly stated otherwise, subgroups of profinite groups are assumed closed.

\section{Auxiliary lemmas}

Multilinear commutator words are words which are obtained by nesting commutators, but using always different variables. More formally, the word $w(x) = x$ in one variable is a multilinear commutator; if $u$ and $v$ are  multilinear commutators involving different variables then the word $w=[u,v]$ is a multilinear commutator, and all multilinear commutators are obtained in this way.
 
An important family of  multilinear commutator words is formed by so-called derived words $\delta_k$,  
  on $2^k$ variables,   defined recursively by
$$\delta_0=x_1,\qquad \delta_k=[\delta_{k-1}(x_1,\ldots,x_{2^{k-1}}),\delta_{k-1}(x_{2^{k-1}+1},\ldots,x_{2^k})].$$
 Of course  $\delta_k(G)=G^{(k)}$ is the $k$-th  term of the derived series of $G$. 

We recall the following well-known result (see for example \cite[Lemma 4.1]{Sh00}). 
\begin{lemma}\label{lem:delta_k} Let $G$ be a group and let $w$ be a multilinear commutator word on $n$ variables. Then each $\delta_n$-value is a $w$-value.
\end{lemma}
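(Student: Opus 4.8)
The plan is to argue by induction on the number $n$ of variables of $w$, exploiting the recursive definition of multilinear commutator words. Before doing so I would isolate one auxiliary monotonicity fact: for all integers $m\ge k\ge 0$, every $\delta_m$-value in $G$ is a $\delta_k$-value in $G$. By chaining it is enough to prove this for $m=k+1$, i.e. that every $\delta_{k+1}$-value is a $\delta_k$-value, and this I would do by induction on $k$. For $k=0$ there is nothing to show, since $\delta_0=x_1$ and every element of $G$ is a $\delta_0$-value. For $k\ge 1$, a $\delta_{k+1}$-value has the form $[a,b]$ with $a$ and $b$ being $\delta_k$-values; by the inductive hypothesis $a$ and $b$ are $\delta_{k-1}$-values, say $a=\delta_{k-1}(p_1,\ldots,p_{2^{k-1}})$ and $b=\delta_{k-1}(q_1,\ldots,q_{2^{k-1}})$, and then the definition $\delta_k=[\delta_{k-1}(x_1,\ldots,x_{2^{k-1}}),\delta_{k-1}(x_{2^{k-1}+1},\ldots,x_{2^k})]$ shows that $[a,b]=\delta_k(p_1,\ldots,p_{2^{k-1}},q_1,\ldots,q_{2^{k-1}})$ is a $\delta_k$-value.

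For the statement itself I would then induct on $n$. If $n=1$, then $w$ is, up to renaming of the variable, the word $x_1$; every element of $G$ is a $w$-value, so in particular every $\delta_1$-value is a $w$-value. If $n\ge 2$, the recursive definition gives $w=[u,v]$, where $u$ is a multilinear commutator word on $k$ of the variables and $v$ is a multilinear commutator word on the remaining $n-k$ variables, with $1\le k\le n-1$. By the inductive hypothesis every $\delta_k$-value is a $u$-value and every $\delta_{n-k}$-value is a $v$-value. Now take a $\delta_n$-value $g$ and write it as $g=[a,b]$ with $a,b$ being $\delta_{n-1}$-values. Since $n-1\ge k$ and $n-1\ge n-k$, the monotonicity fact tells us that $a$ is a $\delta_k$-value, hence a $u$-value, and that $b$ is a $\delta_{n-k}$-value, hence a $v$-value; therefore $g=[a,b]$ is a value of $w=[u,v]$, which completes the induction.

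I do not expect a genuine obstacle here: the whole argument is an elementary double induction. The only points that require care are the bookkeeping of variable counts and the correct use of the structural description of multilinear commutator words — in particular, that in $w=[u,v]$ the variable set of $w$ is the disjoint union of those of $u$ and $v$, so that evaluations of $u$ and of $v$ may be chosen independently — together with the analogous observation, used in the auxiliary fact, that substitutions into the two disjoint blocks of variables of $\delta_k=[\delta_{k-1},\delta_{k-1}]$ can be made independently.
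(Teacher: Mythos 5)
Your proof is correct. The paper does not prove this lemma itself but simply cites Lemma 4.1 of \cite{Sh00}, where essentially the same argument is given: your auxiliary monotonicity fact (every $\delta_m$-value is a $\delta_k$-value for $m\ge k$) combined with induction on the recursive decomposition $w=[u,v]$ is the standard proof, and your bookkeeping of the variable counts ($n-1\ge k$ and $n-1\ge n-k$ because $1\le k\le n-1$) is accurate.
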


The following is Lemma 4.2 in \cite{Sh00}

\begin{lemma}\label{lem:4.2}
Let $w$ be a multilinear commutator word and $G$ a soluble group in which all $w$-values have
finite order. Then the verbal subgroup $w(G)$ is locally finite.
\end{lemma}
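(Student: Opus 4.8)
The plan is to reduce the assertion to showing that $w(G)$ is \emph{periodic}; since a periodic soluble group is locally finite, this suffices. Write $n$ for the number of variables of $w$.

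First I would record an elementary fact about the derived words: by induction on $j$ (trivially for $j=0$), the commutator of two $\delta_j$-values is again a $\delta_j$-value; consequently every $\delta_m$-value with $m\ge n$ is a $\delta_n$-value, hence — by Lemma~\ref{lem:delta_k} — a $w$-value. Using this, one verifies the theorem for the words $\delta_k$ by induction on the derived length $d$ of $G$: if $d\le k$ then $\delta_k(G)=1$, while if $d>k$ one sets $N=G^{(d-1)}\le G^{(k)}=\delta_k(G)$, notes that $N=\delta_{d-1}(G)$ is an abelian group generated by $\delta_{d-1}$-values — which are $\delta_k$-values and so of finite order — hence periodic, and finishes since $\delta_k(G)/N=\delta_k(G/N)$ is periodic by induction and periodic groups are extension-closed. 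Applying this with $k=n$ to $G$ (all of whose $\delta_n$-values are $w$-values, hence of finite order) shows that $\delta_n(G)$ is periodic. Because $w(G/\delta_n(G))=w(G)/\delta_n(G)$, we may therefore assume from now on that $G$ has derived length at most $n$.

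Next, the heart of the matter is to show that $w(G)$ is periodic for such a $G$ by descending the derived series of the soluble group $w(G)$: since $w(G)/w(G)'$ is abelian and generated by the finite-order images of the $w$-values, it is periodic, so it is enough to prove that $w(G)'$ is periodic — and, more generally, that each abelian section $w(G)^{(i)}/w(G)^{(i+1)}$ is periodic. The base case is the metabelian one, which is a direct computation: if $w(G)$ is metabelian then $w(G)'\le Z(w(G))$, and for $w$-values $u,v$ with $u$ of order $m$, centrality of $[u,v]=u^{-1}u^{v}$ gives $[u,v]^{m}=u^{-m}(u^{v})^{m}=1$; as $w(G)'$ is generated, as an abelian group, by such commutators, it is periodic, whence $w(G)$ is periodic.

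The difficulty I expect to be the main obstacle is the propagation of this through the higher terms of the derived series. One has $w(G)^{(i)}=w_i(G)$ for the multilinear commutator words $w_0=w$, $w_{i+1}=[w_i(x^{(1)}),w_i(x^{(2)})]$ (on disjoint tuples of variables), so the section $w(G)^{(i)}/w(G)^{(i+1)}$ is generated by the images of the $w_i$-values, and the task is to see that these iterated commutators of $w$-values have finite order. The metabelian estimate above delivers this one layer at a time, but only modulo the next derived subgroup, so one must organise the argument so that finiteness of orders is fed down the whole series without loss; combining the metabelian bound with the combinatorial structure of multilinear commutator words supplied by Lemma~\ref{lem:delta_k} is, I think, what makes this go through, and it is the step on which I would spend the most care.
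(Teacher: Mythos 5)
The paper does not actually prove this lemma --- it is quoted verbatim from \cite{Sh00} --- so your proposal has to stand on its own, and it does not: the reduction in your first two paragraphs is sound, but the ``heart of the matter'' contains a false step and an acknowledged hole. The false step is the base case: metabelian means $w(G)''=1$, not $w(G)'\le Z(w(G))$ (the latter is nilpotency of class two; the infinite dihedral group is metabelian with trivial centre). Without centrality of $[u,v]$ the computation $[u,v]^{m}=u^{-m}(u^{v})^{m}$ collapses, and the underlying claim is genuinely false: in $D_\infty=\langle a,b\mid a^{2}=b^{2}=1\rangle$, a metabelian group generated by a conjugation-closed set of involutions, the commutator $[a,b]=(ab)^{2}$ has infinite order. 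So finiteness of the orders of generators does not pass to their commutators by any such local computation in the metabelian setting.

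The acknowledged hole is the real crux. Your plan needs the $w_i$-values (iterated commutators of $w$-values) to have finite order, while the hypothesis only concerns $w$-values. For the words $\delta_k$ this works because the set of $\delta_k$-values is commutator-closed --- which is exactly why your first reduction succeeds --- but for a general multilinear commutator word it fails: for $w=[x_1,\delta_2(x_2,\dots,x_5)]$ the commutator $[u,v]$ of two $w$-values need not be a $w$-value, since writing it as one would force $v$ to be a $\delta_2$-value, i.e.\ would force its arbitrary first entry to be a commutator. The standard way around this (and, in substance, the argument of \cite{Sh00}) is to induct on the derived length of $G$ rather than of $w(G)$: with $A=G^{(d-1)}$ abelian and normal, every $w$-value having an entry in $A$ lies in $A$, so the normal subgroup $T\le A$ generated by all such values is abelian and generated by torsion elements, hence periodic; by Lemma~\ref{ts} the image of $A$ in $G/T$ is marginal, so $w(G)\cap A$ becomes central in $w(G)$ modulo $T$, and one finishes using the induction hypothesis for $G/A$ together with a Schur-type argument. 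That marginality input is what replaces your missing centrality, and it is absent from your proposal.
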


If $x$ is an element of a group $G$, we write $x^G$ for the conjugacy class of $x$ in $G$.  More generally,  if $S$ is a subset 
of $G$,
we write $S^G$ for the set of conjugates of elements of $S$. On the other hand, if $K$ is a subgroup of $G$, 
then $K^G$ denotes the normal closure of $K$ in $G$, that is, the subgroup generated by all conjugates of $K$ in $G$, 
with the usual convention
that if $G$ is a topological group then $K^G$ is a closed subgroup.

Recall that if $G$ is a group, $a\in G$ and $H$ is a subgroup of $G$, then $[H,a]$ denotes the subgroup of $G$ generated by all commutators of the form $[h,a]$, where $h\in H$. It is well-known that $[H, a]$ is normalized by $a$ and $H$.

We will denote by $\F $  the set of FC-elements of $G$, i.e. 
$$\F =\{ x\in G \mid |x^G| < \infty\}.$$ 
Obviously  $\F$ is a normal  subgroup of $G$. Note that if $G$ is a profinite group, $\F$ needs not  be closed. 

\begin{lemma} \label{2.3b} 
Let  $G$ be a group. For every $x\in \F$ 
 the subgroup  $[\F,x]^G$ is finite. 
\end{lemma}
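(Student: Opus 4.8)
The plan is to show that $N:=[\F,x]^G$ is a finitely generated periodic FC-group and then to finish by two applications of Schur's theorem; throughout, $\F=\Delta(G)$.

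First I would compute the normal closure explicitly:
$$N=\langle [\delta,x]^g : \delta\in\F,\ g\in G\rangle=\langle [\delta^g,x^g] : \delta\in\F,\ g\in G\rangle=\langle [\delta,y] : \delta\in\F,\ y\in x^G\rangle,$$
using that $\F\trianglelefteq G$ (so $\delta^g$ ranges over $\F$) and that $x^g$ ranges over $x^G$. Since $x$ is an FC-element, $x^G$ is a finite set, say $x^G=\{x_1,\dots,x_m\}$, and each $x_i$ again lies in $\F$ and is an FC-element. As $y\in x^G$ lies in $M:=\langle x^G\rangle$ and $M\trianglelefteq G$, every generator $[\delta,y]$ of $N$ lies in $M$; hence $N\le M\le\F$, and in particular $N$ is an FC-group. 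Moreover $M$ is finitely generated (by the finite set $x^G$) and is an FC-group, so $Z(M)=\bigcap_i C_M(x_i)$ has finite index in $M$; by Schur's theorem $M'$ is finite, so $M$ is (finite)-by-(finitely generated abelian), hence Noetherian, and therefore its subgroup $N$ is finitely generated. (Alternatively, one checks directly that each $[\F,x_i]$ is generated by the finitely many commutators $[\delta,x_i]$ with $\delta$ running over a transversal of the finite-index subgroup $C_\F(x_i)$ in $\F$, via $[ab,x_i]=[a,x_i]^b[b,x_i]$.)

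It remains to see that $N$ is periodic. For this it suffices to show that each generator $[\delta,y]$ (with $\delta\in\F$, $y\in x^G$) has finite order: then the finitely generated abelian group $N/N'$ — with $N'$ finite, by Schur's theorem applied to the finitely generated FC-group $N$ — is generated by torsion elements, hence finite, and so $N$ is finite. Now $\langle\delta,y\rangle$ is generated by two FC-elements of $G$, so it is a finitely generated FC-group; its centre has finite index, and Schur's theorem gives that $\langle\delta,y\rangle'$ is finite. Since $[\delta,y]$ lies in this finite derived subgroup, it has finite order, completing the argument.

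The argument is routine once it is set up; the one point requiring care is the opening identification of $[\F,x]^G$ with $\langle[\delta,y]:\delta\in\F,\ y\in x^G\rangle$, together with the observation that the groups to which Schur's theorem is applied — namely $M$ and each $\langle\delta,y\rangle$ — really are \emph{finitely generated} FC-groups, so that "centre of finite index'' holds and Schur's theorem is available. No genuine obstacle is expected.
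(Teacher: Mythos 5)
Your proof is correct, but it takes a genuinely different route from the paper's. The paper gets the finiteness of $[\F,x]$ almost for free from the classical theorem that the derived subgroup of an FC-group is locally finite (cited from Robinson, Section 14.5): $[\F,x]$ is generated by finitely many commutators $[y,x]$ (via the same transversal argument you mention in parentheses), these lie in the locally finite group $\F'$, so $[\F,x]$ is finite; the normal closure is then handled by observing that $[\F,x]$ has only finitely many conjugates (its generators are FC-elements) and that these conjugates, each being of the form $[\F,x^g]$ and hence normalized by $\F$, normalize one another, so their product is a finite subgroup. You instead avoid the locally-finite-derived-subgroup theorem entirely and work from Schur's theorem alone: you identify $[\F,x]^G$ with $\langle [\delta,y]:\delta\in\F,\ y\in x^G\rangle$, trap it inside the centre-by-finite group $M=\langle x^G\rangle$ to get finite generation via a Noetherian argument, show each generator is torsion by applying Schur to the two-generator FC-group $\langle\delta,y\rangle$, and conclude with one more application of Schur to $N$ itself. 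All the steps check out (in particular $[\delta,y]=(y^{\delta})^{-1}y\in M$, and a finitely generated group generated by a set of torsion elements is generated by finitely many of them, so $N/N'$ really is finite). The trade-off is that the paper's proof is shorter given the quoted background on FC-groups, while yours is self-contained modulo Schur's theorem and in effect re-derives exactly the fragment of FC-theory that is needed; your treatment of the normal closure is also more direct, since you never need the mutual-normalization observation.
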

\begin{proof} Let $\Delta=\Delta(G)$. Note that $\Delta'$ is locally finite (see \cite[Section 14.5]{rob}).  The subgroup $[\Delta,x]$ is generated by finitely many commutators $[y,x]$ where $y \in \Delta$. Hence $[\Delta,x]$ is finite. 
Further, each commutator $[y,x]$ is an FC-element and so $C_G([\Delta,x])$ has finite index in $G$. Consequently, $[\Delta,x]^G$ is a product of finitely many conjugates of $[\Delta,x]$. The conjugates of $[\Delta,x]$ normalize each other so  $[\Delta,x]^G$ is finite. 
\end{proof}

\begin{lemma}\label{114} 
Let $G$ be a locally nilpotent group containing an element with finite centralizer. Suppose that $G$ is residually finite. Then $G$ is finite.
\end{lemma}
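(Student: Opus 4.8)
The plan is to reduce, in stages, to a classical fixed-point argument for finite $p$-groups. We may assume $a\neq 1$, since otherwise $G=C_G(a)$ is already finite; set $F=C_G(a)$. First I would check that $G$ is a torsion group: for any $g\in G$ the subgroup $H=\langle a,g\rangle$ is finitely generated and nilpotent, and its centre lies in $C_H(a)\le F$, hence is finite; since a finitely generated nilpotent group with finite centre is finite (equivalently, an infinite finitely generated nilpotent group has infinite centre; see, e.g., \cite{rob}), $H$ is finite, so $g$ has finite order. Thus $G$ is a locally nilpotent torsion group, hence locally finite, and it is the (internal) direct product $G=\mathrm{Dr}_p\,G_p$ of its primary components $G_p$ — recall that in a locally nilpotent group the elements of $p$-power order form a normal subgroup and components of distinct primes commute elementwise.

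Next I would use this decomposition to pass to a $p$-group. Let $\pi$ be the finite set of primes dividing $|a|$ and write $a=\prod_{p\in\pi}a_p$, where $1\neq a_p\in G_p$ is the $p$-part of $a$. Because components of distinct primes commute, $C_G(a)=\mathrm{Dr}_p\,C_{G_p}(a_p)$, with $C_{G_p}(a_p)=G_p$ for $p\notin\pi$. Finiteness of $F$ then forces all but finitely many of these factors to be trivial: $G_p=1$ for all but finitely many $p$, each $C_{G_p}(a_p)$ with $p\in\pi$ is finite, and the remaining finitely many nontrivial $G_p$ with $p\notin\pi$ are finite. Hence $G$ is a direct product of finitely many $G_p$'s, and it suffices to treat the factors with $p\in\pi$. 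In other words, it is enough to prove: \emph{a residually finite, locally finite $p$-group $G$ admitting an element $a$ of $p$-power order with $C_G(a)$ finite is itself finite}; assume this from now on.

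Now residual finiteness enters. As $F=C_G(a)$ is finite, intersecting over $f\in F\setminus\{1\}$ a normal subgroup of finite index avoiding $f$ yields a normal subgroup $N$ of finite index with $N\cap F=1$, i.e. $C_N(a)=1$; note that $a$ normalizes $N$ and acts on $N$ as an automorphism of $p$-power order. I claim $N=1$. Every element of the locally finite $p$-group $N$ lies in a finite $a$-invariant subgroup $P\le N$, namely the subgroup generated by the $\langle a\rangle$-orbit of that element. On $P$, the automorphism $\alpha$ induced by $a$ has $p$-power order and its fixed-point subgroup lies in $C_N(a)=1$; but a nontrivial finite $p$-group has no fixed-point-free automorphism of $p$-power order — on the nonzero $\mathbb{F}_p$-space $\Omega_1(Z(P))$ the operator $\alpha-1$ is nilpotent, since $(\alpha-1)^{p^k}=\alpha^{p^k}-1=0$, hence has nonzero kernel. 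So $P=1$; therefore $N=1$, and since $[G:N]<\infty$ we conclude that $G$ is finite, and hence, undoing the reduction, so is the original group.

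The main obstacle is this last step, and it has two facets. First, one must turn the hypothesis ``$C_G(a)$ finite'' into a genuinely fixed-point-free action of $a$; residual finiteness produces such an action only on a finite-index subgroup $N$, which is generally infinite, so the fixed-point fact has to be applied locally, on finite $a$-invariant subgroups of $N$. Second, that fact — triviality of a finite $p$-group carrying a fixed-point-free automorphism of $p$-power order — requires $a$ and $N$ to involve the \emph{same} prime, which is exactly what the reduction to primary components guarantees; for a fixed-point-free automorphism of coprime order one could not conclude triviality at all.
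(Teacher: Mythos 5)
Your proof is correct, but it takes a much longer route than the paper's. The paper's argument is three lines: by residual finiteness choose a normal subgroup $N$ of finite index with $N\cap C_G(x)=1$; if $1\neq y\in N$, then $N\cap\langle x,y\rangle$ is a nontrivial normal subgroup of the nilpotent group $\langle x,y\rangle$, hence meets $Z(\langle x,y\rangle)$ nontrivially, and $Z(\langle x,y\rangle)\le C_G(x)$ gives the contradiction $N\cap C_G(x)\neq 1$. Your first step (the centre of $\langle a,g\rangle$ lies in $C_G(a)$, so $\langle a,g\rangle$ is finite) is really the same observation in a different guise, but instead of combining it directly with the choice of $N$, you use it only to establish torsion, then pass through local finiteness, the primary decomposition of a locally nilpotent torsion group, and finally the fixed-point theorem for $p$-power-order automorphisms of finite $p$-groups acting on the $\mathbb{F}_p$-space $\Omega_1(Z(P))$. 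All of these steps check out (in particular the reduction $C_G(a)=\mathrm{Dr}_p\,C_{G_p}(a_p)$ and the local application of the fixed-point argument to the finite $a$-invariant subgroups $\langle x^{a^i}\rangle$ are handled correctly), so the proof is valid; what the detour buys is an explicit explanation of why the prime of $a$ and of the ambient group must match for a fixed-point argument, but the paper's use of ``a nontrivial normal subgroup of a nilpotent group meets the centre'' bypasses primes, torsion and primary decomposition entirely.
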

\begin{proof} Choose $x\in G$ such that $C_G(x)$ is finite. Let $N$ be a normal subgroup of finite index such that $N\cap C_G(x)=1$. Assume that $N\neq1$ and let $1\neq y\in N$. The subgroup $\langle x,y\rangle$ is nilpotent and so the center of $\langle x,y\rangle$ has nontrivial intersection with $N$. This is a contradiction since $N\cap C_G(x)=1$.
\end{proof}
 
Lemma 1.6.1 in \cite{Kh} states that if $G$ is a finite group, $N$ is a normal subgroup of $G$ and $x$ an element of $G$, 
then $|C_{G/N}(xN)| \le |C_G(x)|$.   We will need a version of this lemma for locally finite groups.
\begin{lemma}\label{KK} 
 Let $G$ be a locally finite group and $x$ an element of $G$ such that $C_G(x)$ is finite of order $m$. 
 If $N$ is a normal subgroup of $G$, then 
 $|C_{G/N}(xN)| \le m$.  
\end{lemma}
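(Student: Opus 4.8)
The plan is to reduce to the finite-group case, which is exactly Lemma 1.6.1 of \cite{Kh}, by a routine local argument. Since $C_{G/N}(xN)$ may a priori be infinite, I would show instead that every finite subset of it has at most $m$ elements. So fix finitely many cosets $g_1N,\dots,g_kN$ lying in $C_{G/N}(xN)$ and chosen pairwise distinct; it suffices to prove $k\le m$.

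Next I would pass to the subgroup $H=\langle x,g_1,\dots,g_k\rangle$, which is finite because $G$ is locally finite. Observe that $HN/N\cong H/(H\cap N)$, and that under this isomorphism the coset $g_iN$ corresponds to $g_i(H\cap N)$; in particular the $g_i(H\cap N)$ are pairwise distinct. Since $g_iN$ centralises $xN$ we have $[x,g_i]\in N$, and since $x,g_i\in H$ we also have $[x,g_i]\in H$; hence $[x,g_i]\in H\cap N$, which means that $g_i(H\cap N)$ centralises $x(H\cap N)$ in $H/(H\cap N)$. Thus the $k$ distinct cosets $g_iN$ arise from $k$ distinct elements of $C_{H/(H\cap N)}\bigl(x(H\cap N)\bigr)$.

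Finally, applying Lemma 1.6.1 of \cite{Kh} to the finite group $H$, its normal subgroup $H\cap N$ and the element $x$, I would obtain
$$|C_{H/(H\cap N)}(x(H\cap N))|\le |C_H(x)|\le |C_G(x)|=m,$$
where the middle inequality holds because $C_H(x)=H\cap C_G(x)\subseteq C_G(x)$. Hence $k\le m$. As this bound is uniform over all finite subsets of $C_{G/N}(xN)$, it follows that $|C_{G/N}(xN)|\le m$, as claimed.

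There is no serious obstacle in this argument; the only point that needs a little care is that $C_{G/N}(xN)$ need not be finite a priori, which is why one phrases the proof in terms of arbitrary finite subsets rather than applying the finite-group lemma directly to the whole centraliser.
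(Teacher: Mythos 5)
Your proof is correct and follows essentially the same route as the paper: reduce to the finite group generated by $x$ and finitely many coset representatives (using local finiteness), check that distinctness and the centralizing condition descend to that finite quotient, and then invoke Lemma 1.6.1 of \cite{Kh}. The paper phrases it as a contradiction with $m+1$ elements while you argue directly with $k$ elements, but this is only a cosmetic difference.
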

\begin{proof}
Arguing by contradiction, assume that $C_{G/N}(xN)$ contains $m+1$ pairwise distinct elements $b_1N, \dots , b_{m+1}N$. 
Let   $K=\langle x, b_1, \dots , b_{m+1} \rangle$ and $N_0=N \cap K$. 
Note that $K$ is a finite group 
 and $C_{K/N_0} (x N_0)$ contains the  $m+1$ distinct elements $b_1N_0, \dots , b_{m+1}N_0$. This contradicts Lemma 1.6.1 in \cite{Kh}. 
\end{proof}

\begin{lemma}\label{sol1} Let $d,r,s$ be positive  integers.
 Let $G$ be a soluble group of derived length $d$ generated by a set $X$ such that every element in $X$ has finite order dividing $r$ and has at most
 $s$ conjugates in $G$. Then $G$ has finite exponent bounded by a function of $d,r,s$.
\end{lemma}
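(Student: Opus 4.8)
The plan is to induct on the derived length $d$. When $d=1$ the group $G$ is abelian and generated by elements of order dividing $r$, so $\exp(G)$ divides $r$. For $d\geq 2$ the strategy is to produce a bound for $\exp(G')$ from the inductive hypothesis and then combine it with the trivial bound $\exp(G/G')\mid r$, which holds because $G/G'$ is abelian and generated by the images of the elements of $X$; since then $g^{r}\in G'$ for every $g\in G$, this gives $\exp(G)\mid r\cdot\exp(G')$.

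To control $G'$ I would first bound the exponent of the normal closure $K_x=\langle x^{G}\rangle$ of a single generator $x\in X$ in terms of $r$ and $s$ alone. Since $x$ has at most $s$ conjugates, $K_x$ is generated by at most $s$ conjugates of $x$, each of order dividing $r$ and each having at most $s$ conjugates in $G$, hence at most $s$ in $K_x$; so the intersection of their centralizers in $K_x$ has index at most $s^{s}$ and is central, whence $K_x$ is a BFC-group whose conjugacy classes have size bounded in terms of $s$. By B.\,H.~Neumann's theorem \cite{bhn}, $|K_x'|$ is bounded in terms of $s$; as $K_x/K_x'$ is abelian and generated by elements of order dividing $r$, this yields $\exp(K_x)\leq b$ for some $b=b(r,s)$. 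Now, because $G=\langle X\rangle$, the subgroup $G'$ is generated by the $G$-conjugates of the commutators $[u,v]$ with $u,v\in X$, and each such conjugate $[u^{a},v^{a}]=(u^{a})^{-1}(u^{a})^{v^{a}}$ is a product of two elements of $K_u$, hence lies in $K_u$ and so has order dividing $b$; moreover $C_G([u^{a},v^{a}])\supseteq C_G(u)^{a}\cap C_G(v)^{a}$ has index at most $s^{2}$, so $[u^{a},v^{a}]$ has at most $s^{2}$ conjugates in $G$, and a fortiori in $G'$. Thus $G'$ is soluble of derived length at most $d-1$ and is generated by elements of order dividing $b(r,s)$ having at most $s^{2}$ conjugates, so the inductive hypothesis bounds $\exp(G')$ — and therefore $\exp(G)$ — by a function of $d,r,s$.

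The delicate point, and the place where the hypothesis is really used, is the bound for $\exp(K_x)$: the group $G$ itself need not be finitely generated, so one cannot simply argue that $Z(G)$ has finite index, but the assumption that each generator has at most $s$ conjugates forces each $K_x$ to be finitely generated and hence center-by-finite with an index — and ultimately an exponent — controlled purely in terms of $r$ and $s$. Once this is in place, the passage from $G$ to $G'$ only inflates the parameters $r$ and $s$ by amounts depending on $r$ and $s$, so the induction on $d$ closes.
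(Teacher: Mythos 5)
Your proof is correct and follows essentially the same route as the paper: induction on the derived length, with the inductive step passing to $G'$, which is generated by conjugates of commutators $[u,v]$ of generators, each having at most $s^2$ conjugates and bounded order via Schur's theorem. The only cosmetic difference is that you bound the order of $[u,v]$ by bounding the exponent of the normal closure $K_u=\langle u^G\rangle$, whereas the paper applies Schur's theorem directly to the two-generator subgroup $\langle u,v\rangle$ (whose centre has index at most $s^2$); both work.
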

\begin{proof}
 The proof is by induction on the derived length of $G$. If $G$ is abelian then $G$ has exponent dividing  $r$. 
 Note that $G'$ 
  is generated by all conjugates of the set $\{[y,z]|y,z\in X\}$. As $y,z\in X$ have at most $s$ conjugates in $G$ it follows that $[y,z]$
 has at most $s^2$
conjugates. Note that the center of $\langle y,z\rangle$ coincides with 
 $C_{\langle y,z\rangle}(y)\cap C_{\langle y,z\rangle}(z)$
so it has index at most $s^2$, thus the order of  the derived subgroup of 
$\langle y,z\rangle$ is bounded by  a function of $s$ by Schur's theorem \cite[10.1.4]{rob}. By induction, the exponent of $G'$ is finite and bounded by a function of $d,r,s$.
As $G/G'$ has exponent at most $r$, the result follows.
\end{proof}

Throughout the paper, we will use without explicit references the following result.

\begin{lemma}\label{abelian-by-finite} Let $G$ be a finite-by-abelian profinite group. Then $G$ is central-by-finite.
 \end{lemma}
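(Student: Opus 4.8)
The plan is to prove the equivalent statement that $Z(G)$ is open in $G$ (it is closed, being an intersection of centralizers of elements). I begin with two elementary observations. Since $G$ is finite-by-abelian, the abstract commutator subgroup $G'=[G,G]$ is contained in a finite normal subgroup, hence $G'$ is itself finite (and closed, being a finite subgroup of a Hausdorff group). Consequently, for every $g\in G$ the conjugacy class $g^{G}$ lies in the finite set $gG'$, so $C_{G}(g)$ has finite index in $G$; being closed, it is open. In particular $C:=C_{G}(G')=\bigcap_{x\in G'}C_{G}(x)$ is an open (indeed characteristic) subgroup of $G$.

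Next I would analyse $C$. As $C'\le G'$ and $C'\le C=C_{G}(G')$, the subgroup $A:=C'$ is finite and central in $C$, so $C$ is nilpotent of class at most $2$. Put $Q:=C/Z(C)$; this is a profinite abelian group, and the commutator map induces a continuous, bi-additive, alternating pairing $\beta\colon Q\times Q\to A$ which, by the very definition of $Z(C)$, is non-degenerate (if $\beta(q,-)\equiv 0$ then $q=0$). The heart of the argument is the claim that $Q$ is finite. Since $A$ has finite exponent $m$ and $\beta$ is bi-additive and non-degenerate, $mQ=0$, so $Q$ is a profinite abelian group of exponent dividing $m$; by the structure theorem for such groups (Pontryagin duality plus Pr\"ufer's theorem) we have $Q\cong\prod_{i\in I}\mathbb{Z}/n_{i}$ with each $n_{i}\mid m$. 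Suppose $I$ is infinite. Then, on one hand, $|Q|=\prod_{i}n_{i}=2^{|I|}$. On the other hand, every continuous homomorphism $Q\to A$ factors through a finite subproduct $\prod_{i\in F}\mathbb{Z}/n_{i}$ with $F\subseteq I$ finite, so the set of such homomorphisms has cardinality at most $|I|<2^{|I|}$. But $q\mapsto\beta(q,-)$ is an injection of $Q$ into the set of continuous homomorphisms $Q\to A$ (injectivity is exactly non-degeneracy), which is a contradiction. Hence $Q$ is finite, i.e.\ $Z(C)$ is open in $C$, and therefore open in $G$.

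Finally I would assemble the pieces. Since $C$ is open, write $G=Cg_{1}\cup\cdots\cup Cg_{k}$ with $k=|G:C|$, so that $G=\langle C,g_{1},\dots,g_{k}\rangle$. An element of $C$ lies in $Z(G)$ precisely when it centralizes $C$ and each $g_{i}$; since $Z(C)\le C$, this gives
$$Z(G)=Z(C)\cap\bigcap_{i=1}^{k}C_{G}(g_{i}).$$
Each of the finitely many subgroups on the right-hand side is open in $G$ (by the first observation and the claim above), so $Z(G)$ is open, as required.

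The main obstacle is the claim that a non-degenerate continuous alternating pairing from a profinite abelian group to a finite group forces the source to be finite; this is where compactness is genuinely used, since the analogous assertion fails for abstract groups (infinite extraspecial groups are finite-by-abelian but not central-by-finite), and the cardinality count via Pontryagin duality is the crux of the proof. The remaining steps are routine.
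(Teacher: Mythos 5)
Your proof is correct, but it takes a much longer route than the paper's. The paper's argument is two lines: pick a finite normal subgroup $T$ with $G/T$ abelian; since the open normal subgroups of a profinite group intersect trivially, there is an open normal subgroup $N$ with $N\cap T=1$; then $[N,G]\le N\cap G'\le N\cap T=1$, so $N$ is central and $G$ is central-by-finite. You instead reduce to $C=C_G(G')$, observe it is nilpotent of class two, and force $C/Z(C)$ to be finite by viewing the commutator map as a continuous non-degenerate alternating pairing $\beta\colon Q\times Q\to C'$ and comparing $|Q|=2^{|I|}$ (for $Q\cong\prod_{i\in I}\mathbb{Z}/n_i$, via Pontryagin duality and Pr\"ufer's theorem) with the at most $|I|$ continuous homomorphisms $Q\to C'$. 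Every step checks out, including the final assembly $Z(G)=Z(C)\cap\bigcap_i C_G(g_i)$, and your remark that compactness is essential (infinite extraspecial groups) is apt. What your approach costs is heavy machinery for an elementary fact; what it buys is a very explicit localization of where the topology enters (the count of continuous characters) plus the side information that $C_G(G')$ has open center. Be aware, though, that the separation property used by the paper --- any finite normal subgroup of a profinite group meets some open normal subgroup trivially --- is standard and makes the lemma immediate.
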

\begin{proof}
 Let $T$ be a finite normal subgroup of $G$ such that $G/T$ is abelian and let $N$ be an open  normal subgroup 
 of $G$ such that $N\cap T=1$. Then  $N\cap G'=1$ and so $N$ is central in $G$.
\end{proof}


\section{Combinatorics of commutators}\label{sec:comb}
We will need some machinery concerning combinatorics of commutators, so we now recall some notation from the paper \cite{DMS-revised}.
 
 Throughout this section, $w=w(x_1,\dots,x_n)$ will be a fixed multilinear commutator word.
 If $A_1,\dots,A_n$ are subsets of a group $G$, we write
$$\X_w(A_1,\dots,A_n)$$ to denote the set of all $w$-values $w(a_1,\dots,a_n)$ with $a_i\in A_i$. Moreover, we write 
$w(A_1, \dots , A_n)$ for the subgroup $\langle\X_w(A_1,\dots,A_n)\rangle$. Note that if every $A_i$ is a 
normal subgroup of $G$, then $w(A_1, \dots , A_n)$ is normal in $G$.

Let $I$ be a subset of $\{1,\dots,n\}$. 
  Suppose that we have a family $A_{i_1}, \dots , A_{i_s}$ of subsets of $G$ with indices running  over $I$ and another family 
  $B_{l_1}, \dots , B_{l_t}$ of subsets with indices  running  over $\{1, \dots ,n \} \setminus I.$ 
 We write 
 $$w_I(A_i ; B_l)$$ 
 for $w(X_1, \dots , X_n)$, where $X_k=A_k$ if $k \in I$, and $X_k=B_k$  otherwise. 
 On the other hand, whenever $a_i\in A_i$ for $i\in I$ and $b_l\in B_l$ for $l\in \{1,\dots,n\}\setminus I$, the symbol 
 $w_I(a_i;b_l)$ stands for the element $w(x_1, \dots , x_n)$, where $x_k=a_k$ if $k \in I$, and $x_k=b_k$ otherwise.

 The following lemmas are Lemma 2.4, Lemma 2.5 and Lemma 4.1 in \cite{DMS-revised}.
 \begin{lemma}\label{2.1-conjugates} 
Let $w=w(x_1, \dots, x_n)$ be a multilinear commutator word. 
Assume that  $H$ is a normal subgroup of a group $G$. 
Let $ g_1, \dots , g_n \in G$, $h \in H$  and fix $s \in \{1, \dots, n\}$. 
Then there exist 
 $y_j \in g_j^H$,  for  $j=1,\dots, n$, such that 
 \begin{eqnarray*}
 w_{\{s\}}(g_sh; g_l)=w(y_1, \dots,y_n) w_{\{s\}}(h; g_l). 
\end{eqnarray*}
\end{lemma}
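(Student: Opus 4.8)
The plan is to argue by induction on the structure of the multilinear commutator word $w$, using the recursive definition: $w(x)=x$, and if $w=[u,v]$ with $u,v$ multilinear commutators on disjoint sets of variables, then the variable $x_s$ occurs in exactly one of $u,v$, say in $u$. The base case $w=x$ is trivial: here $n=1$, $s=1$, and the identity reads $g_1h=y_1\cdot h$ with $y_1=g_1\in g_1^H$.

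For the inductive step, suppose $w=[u,v]$ and (after relabelling the variables) $x_s$ appears in $u$. Write a generic $w$-value with $x_s$ replaced by $g_sh$ as a commutator $[\alpha,\beta]$, where $\beta$ is the $v$-value built from the relevant $g_l$'s (not involving $x_s$), and $\alpha$ is the $u$-value in which $x_s$ has been replaced by $g_sh$. By the inductive hypothesis applied to $u$, there are $H$-conjugates $y_j$ of the corresponding $g_j$'s such that $\alpha = u(\ldots) \cdot \alpha_0$, where $\alpha_0 = u_{\{s\}}(h;g_l)$ is the $u$-value with $x_s=h$ and the other entries the original $g_l$'s, and $u(\ldots)$ is a genuine $u$-value on the conjugates $y_j$. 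The key point is that since $H\trianglelefteq G$, the element $\alpha_0\in H$ (it is a $u$-value with one entry in $H$ and $H$ normal), so the factorization $\alpha=u(\ldots)\alpha_0$ has its second factor in $H$. Then expand $[\alpha,\beta]=[u(\ldots)\alpha_0,\beta]$ using the commutator identity $[ab,c]=[a,c]^b[b,c]$, obtaining $[\alpha,\beta]=[u(\ldots),\beta]^{\alpha_0}[\alpha_0,\beta]$. The second factor is exactly $w_{\{s\}}(h;g_l)$. The first factor is a conjugate by $\alpha_0\in H$ of $[u(\ldots),\beta]$, and conjugation by $\alpha_0$ can be absorbed into the entries: conjugating a $w$-value by an element of $H$ replaces each entry $y_j$ (resp. each $g_l$) by an $H$-conjugate, which is again an $H$-conjugate of the original $g_j$. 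So the first factor is a $w$-value $w(y_1',\dots,y_n')$ with each $y_j'\in g_j^H$, as required.

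The main obstacle — and the reason for being careful about which factor lands in $H$ — is ensuring that the "correction" term and all conjugating elements stay inside $H$, so that at the end every entry is still an $H$-conjugate (rather than merely a $G$-conjugate) of the original $g_j$. This is what makes the normality of $H$ essential and is used crucially at two spots: to see $\alpha_0\in H$, and to see that conjugation by $\alpha_0$ maps each $g_j^H$ into itself. A secondary, purely bookkeeping, nuisance is that when $x_s$ lies in $v$ rather than $u$ one should either run the symmetric argument or, more cleanly, use the identity $[a,bc]=[a,c][a,b]^c$ together with the fact that $\beta_0=v_{\{s\}}(h;g_l)\in H$; either way the structure of the argument is identical. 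Once the inductive step is set up this way, the rest is the routine commutator-calculus verification sketched above.
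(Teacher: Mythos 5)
Your argument is correct, and it is essentially the intended one: the paper gives no proof of this lemma (it is quoted as Lemma 2.4 of \cite{DMS-revised}), and the proof there is the same induction on the structure of $w=[u,v]$, using $[ab,c]=[a,c]^b[b,c]$ and the normality of $H$ to keep the correction term $\alpha_0$ and all conjugating elements inside $H$, so that every entry stays in its class $g_j^H$. The only point to tidy is the case where $x_s$ occurs in $v$: the identity $[a,bc]=[a,c][a,b]^c$ yields the two factors in the order $w_{\{s\}}(h;g_l)\cdot w(y_1',\dots,y_n')$, i.e.\ reversed from the statement, so one must still swap them by conjugating the $w$-value by $w_{\{s\}}(h;g_l)^{-1}$ --- harmless, since $w_{\{s\}}(h;g_l)\in H$ (the same observation you use for $\alpha_0$) and conjugation by an element of $H$ preserves each class $g_j^H$.
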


 \begin{lemma}\label{2.2-bis}
Let $G$ be a group and  let $w=w(x_1, \dots, x_n)$ be a multilinear commutator word. 
 Assume that $M, A_1,\dots,A_n$ are  normal subgroups of $G$ such that for some elements  
$a_i\in A_i$,  the  equality  
$$w(a_1(A_1\cap M), \dots , a_n(A_n\cap M))=1$$ holds. 
Then for any subset $I$ of $\{1,\dots,n\}$ we have 
$$w_I(A_i\cap M ; a_l (A_l\cap M))=1.$$
\end{lemma}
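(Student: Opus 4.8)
The plan is to prove the statement by induction on $|I|$. The base case $I=\emptyset$ is exactly the hypothesis $w(a_1(A_1\cap M),\dots,a_n(A_n\cap M))=1$. Since $w_I(A_i\cap M;a_l(A_l\cap M))$ is by definition generated by the corresponding $w$-values, it suffices to prove that every such value is trivial; so fix $I$ with $|I|\ge1$, choose $s\in I$, put $I'=I\setminus\{s\}$, and assume the statement for $I'$. Concretely, one must show that $w(\dots)=1$ whenever the $s$-th argument is an arbitrary element $m_s\in A_s\cap M$, each argument $k\in I'$ is an arbitrary $m_k\in A_k\cap M$, and each argument $l\notin I$ has the form $a_lm_l$ with $m_l\in A_l\cap M$.

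The main tool is Lemma \ref{2.1-conjugates}, applied with $H=M$ and with $s$ as the distinguished index. Taking $g_s=a_s$, $g_k=m_k$ for $k\in I'$, $g_l=a_lm_l$ for $l\notin I$, and $h=m_s\in M$, the lemma produces elements $y_j\in g_j^M$ $(j=1,\dots,n)$ such that
$$w_{\{s\}}(a_sm_s;g_j)=w(y_1,\dots,y_n)\,w_{\{s\}}(m_s;g_j),$$
where on both sides $g_j$ $(j\neq s)$ denotes the argument chosen above. The left-hand side is a $w$-value whose $s$-th argument lies in $a_s(A_s\cap M)$, whose arguments indexed by $I'$ lie in the corresponding $A_k\cap M$, and whose arguments indexed outside $I$ lie in $a_l(A_l\cap M)$; since $\{1,\dots,n\}\setminus I'=\{s\}\cup(\{1,\dots,n\}\setminus I)$, it is one of the values generating $w_{I'}(A_i\cap M;a_l(A_l\cap M))$, hence it equals $1$ by the inductive hypothesis. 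Consequently $w_{\{s\}}(m_s;g_j)=w(y_1,\dots,y_n)^{-1}$, and the proof is reduced to showing that $w(y_1,\dots,y_n)=1$.

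For this I would use the elementary observation, valid because $M$ and every $A_j$ are normal in $G$, that if $A$ and $M$ are normal subgroups of a group $G$, $g\in A$ and $x\in M$, then $g^x=g[g,x]\in g(A\cap M)$, since $[g,x]$ lies in $A$ (as $[g,x]=g^{-1}g^x$ with $g^x\in A$) and in $M$ (as $[g,x]=(x^{-1})^{g}x$ with $(x^{-1})^{g}\in M$). Applying this to each $y_j$ gives $y_s\in a_s^M\subseteq a_s(A_s\cap M)$, $y_k\in m_k^M\subseteq A_k\cap M$ for $k\in I'$, and $y_l\in(a_lm_l)^M\subseteq a_l(A_l\cap M)$ for $l\notin I$. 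Hence $w(y_1,\dots,y_n)$ is again one of the values generating $w_{I'}(A_i\cap M;a_l(A_l\cap M))$, so it is trivial by the inductive hypothesis, and we conclude $w_{\{s\}}(m_s;g_j)=1$, which is what was required. I do not anticipate a genuine obstacle here: the argument is essentially bookkeeping with the index sets, and the only step that needs care is checking that the conjugates $y_j$ delivered by Lemma \ref{2.1-conjugates} stay inside the prescribed cosets $a_j(A_j\cap M)$ — this is exactly what the normality observation above guarantees, and it is what allows the inductive hypothesis for $I'$ to be fed back in.
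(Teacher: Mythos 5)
Your argument is correct. Note that the paper itself gives no proof of this lemma---it is imported verbatim as Lemma 2.5 of \cite{DMS-revised}---so there is nothing internal to compare against; your induction on $|I|$, peeling off one index $s\in I$ via Lemma \ref{2.1-conjugates} and checking that the conjugates $y_j\in g_j^M$ stay in the cosets $g_j(A_j\cap M)$ by normality of $M$ and the $A_j$, is a clean, self-contained derivation and all the coset bookkeeping checks out.
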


 \begin{lemma}\label{M2} 
 Let $G$ be a group and  let $w=w(x_1, \dots, x_n)$ be a multilinear commutator word. 
Let $A_1,\dots,A_n$ and $M$ be normal subgroups of   $G$. 
Let $I$ be a subset of $\{1, \dots ,n \}$. Assume that 
\[ w_J (A_i; A_l\cap M)=1\]
for every proper subset $J$ of $I$.  
Suppose we are given elements  $g_i \in A_i$ for $i \in I$ and  elements $h_k \in A_k\cap M$ for $k \in \{1, \dots, n\}$. 
 Then we have 
\[w_I(g_ih_i; h_l)=w_I(g_i;h_l).\] 
\end{lemma}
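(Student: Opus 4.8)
The plan is to peel off a purely \emph{structural} statement — one that needs no assumption on the $w_J$-values — and only at the very end invoke the hypothesis. Set $R=\prod_{\emptyset\neq T\subseteq I}w_{I\setminus T}(A_i;A_l\cap M)$; since each factor is the verbal subgroup of a tuple of normal subgroups of $G$, it is normal, and so is $R$. I would first prove, for \emph{arbitrary} normal subgroups $A_1,\dots,A_n,M$ and all $g_i\in A_i$ ($i\in I$), $h_k\in A_k\cap M$ ($k\in\{1,\dots,n\}$), that
\[ w_I(g_ih_i;h_l)\equiv w_I(g_i;h_l)\pmod R. \qquad (\ast) \]
Granting $(\ast)$, the lemma is immediate: every index set $I\setminus T$ appearing in $R$ (with $\emptyset\neq T\subseteq I$) is a \emph{proper} subset of $I$, so the hypothesis gives $w_{I\setminus T}(A_i;A_l\cap M)=1$ for each such $T$; hence $R=1$ and $(\ast)$ becomes the asserted equality.

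I would prove $(\ast)$ by induction on the number $n$ of variables of $w$ (equivalently, on the size of the commutator tree of $w$). If $w=x_1$ then either $I=\emptyset$, where both sides of $(\ast)$ are $h_1$ and $R$ is the empty product $1$, or $I=\{1\}$, where $R=w(A_1\cap M)=A_1\cap M$ and $g_1h_1\equiv g_1$ because $h_1\in A_1\cap M$. For the inductive step write $w=[u,v]$ with $u,v$ multilinear commutator words in disjoint variable sets $X_u,X_v$, put $I_u=I\cap X_u$, $I_v=I\cap X_v$, and set $P=u_{I_u}(g_ih_i;h_l)$, $P_0=u_{I_u}(g_i;h_l)$ and likewise $Q,Q_0$ for $v$, so that $w_I(g_ih_i;h_l)=[P,Q]$ and $w_I(g_i;h_l)=[P_0,Q_0]$. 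By the inductive hypothesis, $P=P_0r_u$ and $Q=Q_0r_v$ with $r_u\in R_{I_u}(u):=\prod_{J\subsetneq I_u}u_J(A_i;A_l\cap M)$ and $r_v\in R_{I_v}(v):=\prod_{J\subsetneq I_v}v_J(A_i;A_l\cap M)$. Expanding by the standard identities, $[P_0r_u,Q_0r_v]=[P_0,r_v]^{r_u}[r_u,r_v][P_0,Q_0]^{r_ur_v}[r_u,Q_0]^{r_v}$, so it suffices to show that $[P_0,r_v]$, $[r_u,Q_0]$, $[r_u,r_v]$ and $[[P_0,Q_0],r_ur_v]$ all lie in $R$ (conjugation preserves $R$, as $R$ is normal).

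The key point is that a commutator of a $u$-value lying in $u_{I_u}(A_i;A_l\cap M)$ with an element of $R_{I_v}(v)$ — and, symmetrically, a commutator of an element of $R_{I_u}(u)$ with a $v$-value in $v_{I_v}(A_i;A_l\cap M)$, and a commutator of an element of $R_{I_u}(u)$ with one of $R_{I_v}(v)$ — is built (via the commutator identities and normality, passing from generators to subgroups) out of $w$-values $w_{J_u\cup J_v}(A_k;A_l\cap M)$ in which $J_u\subseteq I_u$, $J_v\subseteq I_v$ and at least one inclusion is strict; then $J_u\cup J_v\subsetneq I$, so each such verbal subgroup is one of the factors of $R$. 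This handles $[P_0,r_v]$, $[r_u,Q_0]$, $[r_u,r_v]$ directly. For the remaining term one notes in addition that $[P_0,Q_0]=(Q_0^{-1})^{P_0}Q_0\in v_{I_v}(A_i;A_l\cap M)$ and, symmetrically, $[P_0,Q_0]=P_0^{-1}P_0^{Q_0}\in u_{I_u}(A_i;A_l\cap M)$, whence $[[P_0,Q_0],r_u]\in[v_{I_v}(A_i;A_l\cap M),R_{I_u}(u)]\subseteq R$ and $[[P_0,Q_0],r_v]\in[u_{I_u}(A_i;A_l\cap M),R_{I_v}(v)]\subseteq R$ by the same mechanism, so $[[P_0,Q_0],r_ur_v]=[[P_0,Q_0],r_v]\,[[P_0,Q_0],r_u]^{r_v}\in R$. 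Combining these gives $[P,Q]\equiv[P_0,Q_0]\pmod R$, completing the induction.

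The main obstacle is entirely the bookkeeping in the inductive step: one must verify that the family of ``impure'' positions produced — those at which an entry from some $A_k\cap M$ appears instead of from $A_k$ — is always a proper subset of $I$, which is exactly why a strict inclusion is forced (because $r_u$, resp.\ $r_v$, carries at least one $A\cap M$-entry at an $I_u$-, resp.\ $I_v$-position). It is precisely this that makes the hypothesis ``$w_J(A_i;A_l\cap M)=1$ for all proper $J\subsetneq I$'' — and nothing stronger, in particular no assumption on $w_I(A_i;A_l\cap M)$ itself — exactly what is needed to force $R=1$.
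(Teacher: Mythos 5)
Your proof is correct. Note first that the paper does not actually prove this lemma: it is quoted verbatim as Lemma 4.1 of \cite{DMS-revised}, so there is no in-paper argument to compare against. Your argument is the natural one and, as far as I can tell, complete: the reduction to the hypothesis-free congruence modulo $R=\prod_{J\subsetneq I}w_J(A_i;A_l\cap M)$ is a clean way to package the induction, the base case is handled correctly (including the degenerate case $I=\emptyset$), and the inductive step via $w=[u,v]$ with the expansion $[P_0r_u,Q_0r_v]=[P_0,r_v]^{r_u}[r_u,r_v][P_0,Q_0]^{r_ur_v}[r_u,Q_0]^{r_v}$ is verified correctly. The two points that genuinely require care are both addressed: (i) the passage from generator-level commutator relations (a $u$-value against a $v$-value is a $w$-value whose index set is a \emph{proper} subset of $I$ because at least one of the two factors carries a proper index set) to subgroup-level containments, which follows since centralizers modulo the normal subgroup $R$ are subgroups and the generating sets are normal subsets; and (ii) the conjugation term $[P_0,Q_0]^{r_ur_v}$, which cannot simply be absorbed modulo $R$ and which you correctly neutralize by observing $[P_0,Q_0]\in u_{I_u}(A_i;A_l\cap M)\cap v_{I_v}(A_i;A_l\cap M)$. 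The bookkeeping that every ``error'' term lands in a factor $w_{J}(A_i;A_l\cap M)$ with $J\subsetneq I$ is exactly the content of the lemma, and you have isolated it correctly.
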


\begin{lemma}\label{comb1}
 Let $w=w(x_1, \dots, x_n)$ be a multilinear commutator word. 
Assume that  $T$ is a normal subgroup of a group $G$ and 
 $a_1, \dots , a_n$ are elements of $G$ such that every element in $\X_w(a_1T,\dots,a_nT)$ has at most $m$ conjugates in $G$.
Then every element in $T_w$ has at most $m^{2^n}$ conjugates in $G$.
\end{lemma}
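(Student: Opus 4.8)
The plan is to establish, by induction on $k\in\{0,1,\dots,n\}$, the following more flexible statement: \emph{if $t_1,\dots,t_k\in T$ and $b_i\in a_iT$ for $k<i\le n$, then $w(t_1,\dots,t_k,b_{k+1},\dots,b_n)$ has at most $m^{2^k}$ conjugates in $G$.} For $k=0$ this is exactly the hypothesis, since a tuple $(b_1,\dots,b_n)$ with $b_i\in a_iT$ gives an element of $\X_w(a_1T,\dots,a_nT)$; and for $k=n$ it says that every element of $\X_w(T,\dots,T)=T_w$ has at most $m^{2^n}$ conjugates in $G$, which is the desired conclusion (elements of $T_w$ being exactly the $w(t_1,\dots,t_n)$ with $t_i\in T$, and inverses of such having the same number of conjugates).

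For the inductive step I would fix $t_1,\dots,t_k\in T$ and $b_{k+1},\dots,b_n$ as above and apply Lemma \ref{2.1-conjugates} with $H=T$ (normal in $G$), with $s=k$, $g_k=a_k$, $h=t_k$, and with the remaining arguments $g_j=t_j$ for $j<k$ and $g_j=b_j$ for $j>k$. This produces elements $y_j\in g_j^{T}$ such that
$$w(t_1,\dots,t_{k-1},a_kt_k,b_{k+1},\dots,b_n)=w(y_1,\dots,y_n)\cdot w(t_1,\dots,t_k,b_{k+1},\dots,b_n),$$
and hence
$$w(t_1,\dots,t_k,b_{k+1},\dots,b_n)=w(y_1,\dots,y_n)^{-1}\cdot w(t_1,\dots,t_{k-1},a_kt_k,b_{k+1},\dots,b_n).$$
Since $T\trianglelefteq G$, for any $g\in G$ we have $g^{T}\subseteq gT$; consequently $y_j\in t_j^{T}\subseteq T$ for $j<k$, $y_k\in a_k^{T}\subseteq a_kT$, and $y_j\in b_j^{T}\subseteq b_jT=a_jT$ for $j>k$, while also $a_kt_k\in a_kT$. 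Thus both $w(y_1,\dots,y_n)$ and $w(t_1,\dots,t_{k-1},a_kt_k,b_{k+1},\dots,b_n)$ are $w$-values of the shape covered by the inductive hypothesis at level $k-1$ (their first $k-1$ arguments lie in $T$ and their last $n-k+1$ arguments lie in the cosets $a_kT,\dots,a_nT$, with $a_kt_k$ playing the role of $b_k$). Hence each of them has at most $m^{2^{k-1}}$ conjugates in $G$, and so does $w(y_1,\dots,y_n)^{-1}$, because $C_G(x)=C_G(x^{-1})$ for every $x$.

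To conclude I would use that $C_G(x)\cap C_G(y)\le C_G(xy)$, so the number of conjugates of a product is at most the product of the numbers of conjugates of the factors; applied to the displayed factorization this gives that $w(t_1,\dots,t_k,b_{k+1},\dots,b_n)$ has at most $\bigl(m^{2^{k-1}}\bigr)^2=m^{2^k}$ conjugates in $G$, completing the induction and hence the proof. The only point that needs care is the bookkeeping in the inductive step — checking that the two factors produced by Lemma \ref{2.1-conjugates} genuinely fall under the level-$(k-1)$ hypothesis — and this reduces entirely to the elementary observation that $T$-conjugacy does not move cosets of $T$; no further machinery is required.
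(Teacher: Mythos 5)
Your proof is correct and follows essentially the same route as the paper's: both peel off the coset representatives one variable at a time via Lemma \ref{2.1-conjugates}, bound the resulting product of two $w$-values by the square of the bound for each factor, and induct, squaring the conjugacy-class bound at each of the $n$ steps to reach $m^{2^n}$. Your bookkeeping (tracking which arguments lie in $T$ and which in the cosets $a_iT$) is in fact slightly cleaner than the paper's, but the argument is the same.
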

\begin{proof}
 We will first prove the following statement:
 \smallskip
 
 {\noindent ($*$)
 Assume that for some $ g_1, \dots , g_n \in G$ 
    every element in the set $\X_w(g_1T,\dots,g_nT)$ has at most $t$ conjugates in $G$,
  and  let $s \in \{1, \dots, n\}$. Then every element of the form
 $w_{\{s\}}(h_s; g_lh_l)$, where $h_1,\dots,h_n\in T$, has at most $t^2$ conjugates.}
\vskip8pt
 
 Choose an element $z=w_{\{s\}}(h_s; g_lh_l)$ as above. By Lemma \ref{2.1-conjugates} 
  \begin{eqnarray*}
 w_{\{s\}}(g_sh_s; g_lh_l)=w(y_1, \dots,y_n) w_{\{s\}}(h_s; g_lh_l),
\end{eqnarray*}
where $y_j \in (g_jh_j)^T\subseteq g_jT$,  for  $j=1,\dots, n$.

As both $w_{\{s\}}(g_sh_s; g_lh_l)$ and $w(y_1, \dots,y_n)$ lie in 
$\X_w(g_1T,\dots,g_nT)$, they have at most $t$ 
conjugates in $G$.
Thus 
$$z=w(y_1, \dots,y_n)^{-1}w_{\{s\}}(g_sh_s; g_lh_l)$$
 has at most $t^2$ conjugates in $G$. This proves ($*$).

We will now prove that  every element in 
 $$\X_w(T,\dots,T,a_{i}T,\dots,a_nT)$$ 
  has at most $m^{2^i}$ conjugates, by induction on $i$. The lemma will follow by taking $i=n$.
 
 If $i=1$ the statement is true by the hypotheses. So assume that $i\ge 2$ and every element in 
 $\X_w(T,\dots,T,a_{i-1}T,\dots,a_nT)$ has at most $m^{2^{i-1}}$ conjugates. By applying ($*$) with
 $g_1=\dots=g_{i-1}=1$, $t=2^{i-1}$ and $s=i$ we get the result.
\end{proof}
\begin{lemma}\label{comb2}
 Let $w=w(x_1, \dots, x_n)$ be a multilinear commutator word. 
Assume that  $H$ is a normal subgroup of a group $G$. Then there exixst a positive integer $t_n$ depending only on $n$ 
such that for every 
$ g_1, \dots , g_n \in G$, $ h_1, \dots , h_n \in G$ the $w$-value $w(g_1h_1,\dots,g_nh_n)$ can be written in the form:
$w(g_1h_1,\dots,g_nh_n)=ah,$
where $a$ is a product of at most $t_n$ conjugates of elements in  $\{g_1^{\pm 1}, \dots , g_n^{\pm 1}\}$ and $h\in H_w$. 
\end{lemma}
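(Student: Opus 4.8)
The plan is to argue by induction on $n$, the number of variables of $w$. For $n=1$ the word is $w(x_1)=x_1$, so $w(g_1h_1)=g_1\cdot h_1$ and one takes $a=g_1$, $h=h_1$; thus $t_1=1$. For the inductive step write $w=[u,v]$, where $u=u(x_1,\dots,x_k)$ and $v=v(x_{k+1},\dots,x_n)$ are multilinear commutator words of weight $k$ and $n-k$. By the inductive hypothesis applied to the two halves, write $u(g_1h_1,\dots,g_kh_k)=a_uh_u$ and $v(g_{k+1}h_{k+1},\dots,g_nh_n)=a_vh_v$, where $a_u$ (respectively $a_v$) is a product of at most $t_k$ (respectively $t_{n-k}$) conjugates of elements of $\{g_1^{\pm1},\dots,g_k^{\pm1}\}$ (respectively $\{g_{k+1}^{\pm1},\dots,g_n^{\pm1}\}$), while $h_u\in H_u$ and $h_v\in H_v$. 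Then $w(g_1h_1,\dots,g_nh_n)=[a_uh_u,a_vh_v]$.

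Next I would expand this commutator by the identities $[xy,z]=[x,z]^y[y,z]$ and $[x,yz]=[x,z][x,y]^z$, which give $[a_uh_u,a_vh_v]=[a_u,h_v]^{h_u}[a_u,a_v]^{h_vh_u}[h_u,h_v][h_u,a_v]^{h_v}$. Here the first two factors form a ``$g$-part'' and the last two an ``$H$-part''. For the $g$-part: iterating the same identities and using $[p,q]=p^{-1}p^q$ shows that $[a_u,h_v]$ and $[a_u,a_v]$, and hence their conjugates by $h_u$ and by $h_vh_u$, are products of boundedly many conjugates of the $g_i^{\pm1}$, because conjugating a conjugate of some $g_i$ by an arbitrary element again yields a conjugate of $g_i$; so $a:=[a_u,h_v]^{h_u}[a_u,a_v]^{h_vh_u}$ is such a product, of length at most $2t_k+2t_kt_{n-k}$. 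For the $H$-part: since $H$ is normal, $[h_u,h_v]$ and $[h_u,a_v]^{h_v}$ both lie in $H$, and $[h_u,h_v]$, being a commutator of a $u$-value and a $v$-value of $H$, is a $w$-value of $H$ (and remains one under conjugation by $G$, as $H$ is normal). Thus $w(g_1h_1,\dots,g_nh_n)=ah$ with $a$ a product of at most $t_n\le 2t_{n-1}^2+2t_{n-1}$ conjugates of the $g_i^{\pm1}$, and $h=[h_u,h_v][h_u,a_v]^{h_v}\in H$.

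The step I expect to be the main obstacle is upgrading ``$h\in H$'' to ``$h\in H_w$''. The crude expansion leaves the ``mixed'' term $[h_u,a_v]^{h_v}$, whose two legs lie on opposite sides of $H$, and a priori it belongs only to $[H,G]$ rather than to the set of $w$-values of $H$; one has to sort precisely which pieces get absorbed into $a$ and which are retained as $w$-values of $H$. The clean device for this is Lemma~\ref{2.1-conjugates} rather than raw collection: applied successively for $s=1,\dots,n$ to $w(g_1h_1,\dots,g_nh_n)$, it splits off at each stage a factor $w(y_1,\dots,y_n)$ in which every $y_j$ is a conjugate of $g_j$ up to an $H$-correction, while replacing the remaining $w$-value by one with one further argument pushed into $H$, so that after $n$ steps what is left is $w(h_1,\dots,h_n)\in H_w$. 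The delicacy is that the $H$-corrections inside the $y_j$ force those factors to be peeled again, and one must check that everything outside $H$ can still be collected into a product of conjugates of the $g_i^{\pm1}$ whose number is bounded purely in terms of $n$. This uniform bookkeeping, not any single identity, is where the real work lies.
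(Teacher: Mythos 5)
Your induction, the decomposition $w(g_1h_1,\dots,g_nh_n)=[a_uh_u,a_vh_v]$ and the commutator expansion are exactly the paper's, but your proof stops one observation short of closing, and the workaround you sketch instead is not carried out. The obstacle you flag --- the mixed term $[h_u,a_v]^{h_v}$ --- is resolved by the very identity you already use for the other factors: $[h_u,a_v]=(a_v^{-1})^{h_u}a_v$, and since $a_v$ is a product of at most $t_{n-k}$ conjugates of elements of the inversion-closed set $\{g_1^{\pm1},\dots,g_n^{\pm1}\}$, so is $(a_v^{-1})^{h_u}$ (a conjugate of a conjugate of $g_i^{\pm1}$ is again a conjugate of $g_i^{\pm1}$). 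Hence $[h_u,a_v]^{h_v}$ is a product of at most $2t_{n-k}$ such conjugates and belongs in the $a$-part, not the $H$-part. One then rewrites $[h_u,h_v][h_u,a_v]^{h_v}=[h_u,a_v]^{h_v}\,[h_u,h_v]^{[h_u,a_v]^{h_v}}$, and the last factor lies in $H_w$ because $h_u\in H_u$, $h_v\in H_v$ give $[h_u,h_v]\in H_w$, and $H_w$ is closed under conjugation by $G$ ($w$ is multilinear and $H$ is normal, so $w(k_1,\dots,k_n)^g=w(k_1^g,\dots,k_n^g)$ with $k_i^g\in H$). This yields $t_n\le 4t_k+2t_{n-k}$ and finishes the induction.

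By contrast, the route you propose via iterated application of Lemma~\ref{2.1-conjugates} does not obviously terminate: each application splits off a factor $w(y_1,\dots,y_n)$ with $y_j\in(g_jh_j)^H$, which is again a $w$-value of elements of the mixed form ``conjugate of $g_j$ times element of $H$'' and so must itself be peeled, and you give no argument that the total number of conjugates of the $g_i^{\pm1}$ produced by this recursion is bounded in terms of $n$ alone. Since you explicitly defer that bookkeeping (``where the real work lies''), the proof as written has a genuine gap; the fix is the single-identity absorption of the mixed term described above, not a new peeling scheme.
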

\begin{proof}
 The proof is by induction on the number $n$ of variables appearing in $w$. If $n=1$ then $w=x$ and the result is true. 
 
 If $n>1$,
 then $w$ is of the form $w=[u,v]$, where $u=u(x_1,\dots,x_r)$, $v=v(x_{r+1},\dots,x_n)$ are multilinear commutator words.
 By induction, $u(g_1h_1,\dots,g_rh_r)=a_1h_1$, $v(g_{r+1}h_{r+1,}\dots,g_nh_n)=a_2h_2$, 
 where $a_1$ (resp. $a_2$) is a product of at most $t_r$ (resp. $t_{n-r}$) conjugates of elements in  
 $S=\{g_1^{\pm 1}, \dots , g_n^{\pm 1}\}$, $h_1\in H_u$ and $h_2\in H_v$. By the standard commutator formulas we have that:
 $$w(g_1h_1,\dots,g_nh_n)=[a_1h_1,a_2h_2]=[a_1,a_2h_2]^{h_1}[h_1,a_2h_2]=$$ 
 $$
 [([a_1,h_2][a_1,a_2]^{h_2})^{h_1}[h_1,h_2][h_1,a_2]^{h_2}=$$
 $$[a_1,h_2]^{h_1}[a_1,a_2]^{h_2h_1}[h_1,a_2]^{h_2}[h_1,h_2]^{[h_1,a_2]^{h_2}},$$
where $[a_1,h_2]=a_1^{-1}a_1^{h_2}$, $[a_1,a_2]=a_1^{-1}a_1^{a_2}$ are products of at most $2t_r$ conjugates of elements in $S$,
$[h_1,a_2]=(a_2^{-1})^{h_1}a_2$ is a product of at most $2t_{n-r}$ conjugates of elements in $S$ and $[h_1,h_2]^{[h_1,a_2]^{h_2}}\in H_w$.
So the result follows taking $t_n$ to be the maximum of the set $\{4t_r+2t_{n-r}|r=1,\dots,n-1\}$. 
\end{proof}


\section{ Profinite groups in which  $w$-values are  FC-elements. }\label{sec:FC}

The famous theorem of  B. H. Neumann says that the commutator subgroup of a BFC-group is finite  \cite{bhn}.
 This was recently extended in \cite{DMS-BFC} as follows. 
  Let $w$ be a multilinear commutator word and $G$ a group in which 
  $|x^G|\le m$ for every $w$-value $x$. Then the derived subgroup of $w(G)$ is finite of order bounded by a 
function of $m$ and $w$. The case where $w=[x,y]$ was handled in \cite{dieshu}. 

In the present article we require a profinite (non-quantitative) version of the above result. We show that if $G$ is a profinite group in which all $w$-values are FC-elements, then  
the derived subgroup of  $w(G)$ is finite. In fact we establish a stronger result, which uses the concept of marginal subgroup.

Let G be a group and $w=w(x_1,\dots,x_n)$   a word. The marginal subgroup $w^*(G)$ of
$G$ corresponding to the word $w$ is defined as the set of all $x \in G$  such that
 $$w(g_1,\dots, x g_i,\dots,g_n)= w(g_1,\dots,  g_i x ,\dots,g_n)=w(g_1,\dots,g_i,\dots,g_n)$$
 for all  $g_1,\dots,g_n \in G$ and $1 \le i \le n$. 
 It is well known that $w^*(G)$ is a characteristic subgroup of $G$ and that  $[w^*(G), w(G)]=1$. 
  
  Note that marginal subgroups in profinite groups are closed. 

Let  $S$ be a subset of a group $G$.
Define the  $w^*$-residual of  $S$ of $G$ to be the intersection  of all normal subgroups $N$ such that $SN/N$ is contained in the marginal subgroup  $w^*(G/N)$.

 For  multilinear commutator words the  $w^*$-residual of  a normal subgroup has the following characterization.   

\begin{lemma}\label{ts}
Let $w$ be a multilinear 
commutator word, $G$  a group and $N$ a normal subgroup of $G$. 
 Then the   $w^*$-residual of $N$ in $G$ is the subgroup generated by the elements $w(g_1, \dots, g_n)$ where at least one of $g_1, \dots, g_n$ belongs to $N$. \end{lemma}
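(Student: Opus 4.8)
The plan is to let $K$ denote the (closed, in the profinite case) subgroup generated by all $w$-values $w(g_1,\dots,g_n)$ having $g_i\in N$ for at least one index $i$, and to prove that $K$ coincides with the $w^*$-residual $R$ of $N$ in $G$. Since the generating set of $K$ is invariant under conjugation (if $g_i\in N$ then $g_i^{\,t}\in N$ as $N$ is normal), $K$ is a normal subgroup of $G$.

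First I would establish $K\subseteq R$. Let $M$ be any normal subgroup of $G$ with $NM/M\subseteq w^*(G/M)$; it suffices to show that every generator $w(g_1,\dots,g_n)$ of $K$, say with $g_i\in N$, lies in $M$. Write $\bar g$ for the image of $g$ in $G/M$. Then $\bar g_i\in NM/M\subseteq w^*(G/M)$, so applying the defining property of the marginal subgroup with the element $\bar g_i$ to the tuple having $1$ in the $i$-th position we get $w(\bar g_1,\dots,\bar g_{i-1},\bar g_i,\bar g_{i+1},\dots,\bar g_n)=w(\bar g_1,\dots,\bar g_{i-1},1,\bar g_{i+1},\dots,\bar g_n)$. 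The right-hand side is trivial, because a multilinear commutator word evaluates to $1$ as soon as one of its arguments equals $1$ (an immediate induction on the structure of $w$). Hence $w(g_1,\dots,g_n)\in M$, so $K\subseteq M$; as $M$ was arbitrary, $K\subseteq R$.

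For the reverse inclusion it is enough to show that $K$ is itself one of the subgroups appearing in the intersection defining $R$, that is, $NK/K\subseteq w^*(G/K)$. Put $\bar G=G/K$ and $\bar N=NK/K$. By construction every $w$-value in $\bar G$ with at least one argument in $\bar N$ is trivial; in the notation of Section \ref{sec:comb} this says $w_J(\bar G;\bar N)=1$ for every proper subset $J$ of $\{1,\dots,n\}$, since such a $w_J$ has at least one variable ranging over $\bar N$. Now apply Lemma \ref{M2} with $A_1=\dots=A_n=\bar G$, with $M=\bar N$, and with $I=\{1,\dots,n\}$: its hypothesis is precisely the vanishing just observed, and its conclusion reads
$$w(g_1h_1,\dots,g_nh_n)=w(g_1,\dots,g_n)\qquad\text{for all }g_j\in\bar G,\ h_j\in\bar N.$$
Taking $h_j=1$ for $j\ne i$ and $h_i=\bar x$ arbitrary in $\bar N$ gives $w(g_1,\dots,g_i\bar x,\dots,g_n)=w(g_1,\dots,g_n)$; the companion identity $w(g_1,\dots,\bar x g_i,\dots,g_n)=w(g_1,\dots,g_n)$ follows by writing $\bar x g_i=g_i(g_i^{-1}\bar x g_i)$ with $g_i^{-1}\bar x g_i\in\bar N$. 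Hence $\bar x\in w^*(\bar G)$ for every $\bar x\in\bar N$, so $\bar N\subseteq w^*(\bar G)$, which gives $R\subseteq K$ and completes the argument.

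The main obstacle is the inclusion $R\subseteq K$. A naive attempt to verify the congruence $w(g_1,\dots,g_ix,\dots,g_n)\equiv w(g_1,\dots,g_n)\pmod{K}$ directly, by repeatedly applying Lemma \ref{2.1-conjugates}, runs into an apparent circularity: that lemma replaces a single modified argument by a $w$-value \emph{all} of whose arguments are conjugates, so the reduction never terminates. The decisive step is to pass to the quotient $G/K$ first and then invoke Lemma \ref{M2} with $I$ the full index set, which collapses everything into one application. The remaining points — that $K$ is a closed normal subgroup, and that a multilinear commutator word annihilates a tuple as soon as one entry is $1$ — are routine.
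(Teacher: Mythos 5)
Your proof is correct and takes essentially the same route as the paper's: the inclusion $K\subseteq R$ follows directly from the defining property of the marginal subgroup, and the reverse inclusion is obtained by passing to $G/K$ and invoking Lemma \ref{M2} with $I=\{1,\dots,n\}$ to verify that $NK/K\le w^*(G/K)$. If anything, you are slightly more explicit than the paper in checking the hypothesis $w_J(\bar G;\bar N)=1$ for \emph{all} proper subsets $J$ before applying Lemma \ref{M2}.
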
 
This follows from  \cite[Theorem 2.3]{TS}. 
For the reader's convenience, we will give here a proof in the spirit of Section \ref{sec:comb}. 

\begin{proof} 
Let $N_i=\langle w(g_1, \dots, g_n)|g_1,\dots,g_n\in G{\textrm{ and }}g_i\in N\rangle$ 
and let $R=N_1N_2\dots N_n$. 
 Clearly, if $M$ is a normal subgroup of $G$ such that $N/M$ is contained in $w^*(G/M)$ then $N_i\le M$ for every
$i=1,\dots, n$. 
 Therefore $R$ is contained in the $w^*$-residual of $N$ 

 On the other hand, it  follows from Lemma \ref{M2} 
 that if  $N_i=1$, then $$w(g_1,\dots,g_ih,\dots,g_n)=w(g_1,\dots,g_i,\dots,g_n)$$ for every $g_1,\dots,g_n\in G$ and 
every $h$ in $N$. 
Thus we have
$$w(g_1,\dots,g_ih,\dots,g_n)R=w(g_1,\dots,g_i,\dots,g_n)R$$
for every $i=1,\dots, n$, for every $g_1,\dots,g_n\in G$ and 
every $h\in N$. So $N/R$ is contained  in $w^*(G/R)$.
This implies the result.
\end{proof}

It follows form Lemma \ref{ts} that if $w$ is a multilinear commutator word and $N$ is a normal subgroup of 
a group $G$ which does not contain nontrivial $w$-values, then $N$ is contained $w^*(G)$ and, in particular, it centralizes $w(G)$. 
 Indeed in this case, by Lemma \ref{ts},  the $w^*$-residual of  $N$ in $G$ is trivial. 
 
   A word $w$ is  concise if whenever $G$ is a group
 such that the set $G_w$ is finite, it follows that also $w(G)$ is finite. Conciseness of multilinear commutators was proved by
 J.\,C.\,R. Wilson in \cite{jwilson}  (see also \cite{GuMa}).
 
\begin{lemma}\label{concise}
 Let $w$ be multilinear 
commutator word, $G$  a profinite group and  $N$  an open normal subgroup of $G$. Then the $w^*$-residual of $N$ is open in $w(G)$. 
\end{lemma}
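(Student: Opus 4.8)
The goal is to show that the $w^*$-residual $R$ of an open normal subgroup $N$ is open in $w(G)$. By Lemma~\ref{ts}, $R$ is the subgroup generated by all $w$-values $w(g_1,\dots,g_n)$ with at least one $g_i\in N$. The key point is that the quotient $w(G)/R$ is finite: once we know $w(G)/R$ is finite, since $R$ is closed (it is the $w^*$-residual, hence an intersection of closed normal subgroups), openness follows immediately.

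**Main steps.** First I would pass to the finite quotient $\bar G = G/R$. Since $N$ is open, $\bar N = NR/R$ has finite index in $\bar G$; fix a finite transversal, so every element of $\bar G$ is of the form $gh$ with $g$ running over a fixed finite set $S\subseteq \bar G$ of coset representatives and $h\in \bar N$. Now apply Lemma~\ref{comb2} in $\bar G$ with $H=\bar N$: every $w$-value $w(g_1h_1,\dots,g_nh_n)$ (with $g_i\in S$, $h_i\in \bar N$) can be written as $ah$ where $a$ is a product of at most $t_n$ conjugates of the $g_i^{\pm1}$ and $h\in \bar N_w$. But $\bar N_w$ consists of $w$-values all of whose arguments lie in $\bar N$, so by Lemma~\ref{ts} these lie in the image of $R$ in $\bar G$, which is trivial; hence $h=1$. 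Therefore every $w$-value in $\bar G$ is a product of at most $t_n$ conjugates of elements from the finite set $\{g^{\pm1}: g\in S\}$. Consequently every $w$-value in $\bar G$ lies in the set of products of at most $t_n$ elements of the (finite) conjugacy-class union $\{g^{\pm1}: g\in S\}^{\bar G}$, which is a finite subset of $\bar G$; so $\bar G$ has only finitely many $w$-values, i.e. $\bar G_w$ is finite.

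**Conclusion.** Now conciseness of multilinear commutator words (J.\,C.\,R.~Wilson's theorem, recalled just before the statement) applies to the group $\bar G = G/R$: since $\bar G_w$ is finite, $w(\bar G)=w(G)R/R$ is finite, that is, $w(G)/R$ (which equals $w(G)/(w(G)\cap R)$, and $R\le w(G)$ by Lemma~\ref{ts}) is finite. Since $R$ is closed and of finite index in $w(G)$, it is open in $w(G)$.

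**Main obstacle.** The only delicate point is making sure that the ``$h$'' term produced by Lemma~\ref{comb2} genuinely vanishes modulo $R$: this requires recognizing that $\bar N_w\subseteq$ (image of $R$), which is exactly what Lemma~\ref{ts} gives, since any $w$-value with all arguments in $\bar N$ (equivalently, with at least one argument in $\bar N\supseteq$ everything, but more to the point, with an argument in $N$) lies in the $w^*$-residual. After that, bounding the number of $w$-values in the finite group $\bar G$ is purely formal, and conciseness does the rest. One should double-check that Lemma~\ref{comb2} is applied with the correct normal subgroup $H=\bar N$ and that the transversal elements $g_i$ range over a genuinely finite set, which is guaranteed by $N$ being open.
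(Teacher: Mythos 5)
There is a genuine gap in the middle of your argument. After you (correctly) conclude that every $w$-value of $\bar G=G/R$ equals $a$, a product of at most $t_n$ conjugates of elements of the finite set $\{g^{\pm1}:g\in S\}$, you assert that the conjugacy-class union $\{g^{\pm1}:g\in S\}^{\bar G}$ is finite. This is unjustified: $\bar G=G/R$ is not a finite group (your opening phrase ``pass to the finite quotient $G/R$'' is already incorrect --- finiteness of $w(G)/R$ is exactly what is to be proved, and $G/w(G)$ may well be infinite), and conjugacy classes in an infinite profinite group are in general infinite. Concretely, in Lemma \ref{comb2} the conjugating elements appearing in $a$ depend on the $h_i$, so as $h_1,\dots,h_n$ range over the infinite set $\bar N$ the element $a$ ranges over a potentially infinite set, and you do not obtain finiteness of $\bar G_w$.

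The repair is to use the marginal property of $N/R$ directly rather than Lemma \ref{comb2}. By definition of the $w^*$-residual, $N/R$ is contained in $w^*(G/R)$, so $w(g_1x_1,\dots,g_nx_n)=w(g_1,\dots,g_n)$ in $G/R$ for all $x_i\in N/R$ (apply the defining property of the marginal subgroup one coordinate at a time). Hence the $w$-values of $G/R$ depend only on the cosets of $N/R$, of which there are finitely many since $N$ is open; so $G/R$ has at most $|G:N|^n$ $w$-values. From that point your conclusion is fine: Wilson's conciseness theorem gives that $w(G/R)=w(G)/R$ is finite, and since $R$ is closed and contained in $w(G)$ by Lemma \ref{ts}, it is open in $w(G)$. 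This is precisely the paper's argument.
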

\begin{proof}
 Let $K$ be the $w^*$-residual of $N$. 
 As $N/K$ is   contained in $w^*(G/K)$ and it has finite index in $G/K$, we deduce that the set of $w$-values of $G/K$ is 
 finite.  It follows from the above result of   Wilson   that 
 $w(G/K)$ is finite, as desired.
\end{proof}

As above,  $\F $ denotes the set of FC-elements of $G$. 
  In what follows we will denote by $H$ the  topological closure of $\F$ in a profinite group $G$.  

 The goal of this section is to  prove   the following theorem. 

\begin{theorem}\label{genN}
 Let $w$ be a multilinear 
commutator word, $G$  a profinite group and $T$ a normal subgroup of $G$ such that 
 every $w$-value of $G$  contained in $T$ is an FC-element. 
 Then the $w^*$-residual of $T$ has finite commutator subgroup.  
\end{theorem}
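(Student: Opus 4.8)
The plan is to reduce the statement to the known quantitative Bounded-Neumann-type result for multilinear commutator words (the result of \cite{DMS-BFC} quoted at the start of Section \ref{sec:FC}), by producing a single integer $m$ bounding the number of $G$-conjugates of every $w$-value lying in the $w^*$-residual $R$ of $T$. Let me write $R$ for the $w^*$-residual of $T$; by Lemma \ref{ts}, $R = N_1\cdots N_n$ where $N_i = \langle w(g_1,\dots,g_n) \mid g_i \in T\rangle$, and in particular $R\le T$, so by hypothesis every $w$-value contained in $R$ is an FC-element. What we need is not merely that these values have finite conjugacy classes but that the classes have \emph{bounded} size; then \cite{DMS-BFC} applied inside $R$ (noting $w(R)=R$, since $R$ is generated by $w$-values) gives that $R'$ is finite.

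First I would handle the situation one coordinate at a time. Fix $i$ and consider $N_i = w(G,\dots,T,\dots,G)$ (with $T$ in position $i$). I would like to show that $w$-values of the form $w(g_1,\dots,g_ih,\dots,g_n)$ with $h\in T$, all of which lie in $N_i$, have boundedly many conjugates. The key input is that each $w$-value contained in $T$ is an FC-element, but the sizes of the individual conjugacy classes are a priori unbounded; so the first serious move is a compactness/Baire-category argument in the profinite group $G$. Writing $G = \varprojlim G/U$ over open normal $U$, for each open $U$ the image of $T$ contains only finitely many $w$-values, each with a finite conjugacy class in $G/U$; I would use this together with Lemma \ref{concise} — which tells us the $w^*$-residual of an open normal subgroup is open in $w(G)$ — to pass to a quotient in which the relevant $w$-values become marginal, i.e. trivial after factoring, and then run an inverse-limit argument. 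Concretely: the set of $w$-values in $T$ is a countable union (over the finitely many values modulo each $U$) of closed sets on which the conjugacy-class size is constant; a Baire-category argument inside the compact group $G$ should force one of these pieces, and hence a whole coset structure, to carry a uniform bound.

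The combinatorial machinery of Section \ref{sec:comb} is what converts a bound ``on a coset'' into a bound ``on all of $N_i$,'' and then on $R$. Once I have an open normal subgroup $V$ of $G$ and fixed coset representatives $a_1,\dots,a_n$ such that every element of $\mathcal X_w(a_1V,\dots,a_nV)$ has at most $m_0$ conjugates in $G$, Lemma \ref{comb1} (with $T$ there replaced by $V$) yields that every element of $V_w$ has at most $m_0^{2^n}$ conjugates in $G$. Applying this to the finitely many cosets of $V$ inside $G$ and taking the maximum of the resulting bounds handles all $w$-values in $G$, and in particular all those in $R\le T$; Lemma \ref{comb2} is available if instead I want to track how a general $w$-value decomposes into conjugates of coset representatives times a $w$-value from $V$. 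In any case the outcome is a single $m$ with $|x^G|\le m$ for every $w$-value $x\in R$. Finally, since $R$ is topologically generated by $w$-values and $R$ is itself a (closed, normal) subgroup with $w(R)$ dense in $R$, I would apply the quantitative result of \cite{DMS-BFC} to conclude $R' = [R,R]$ is finite — using also that $R'$ being finite is a closed condition, so density of the $w$-values suffices.

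The main obstacle I anticipate is exactly the passage from ``each conjugacy class of a $w$-value in $T$ is finite'' to ``these sizes are uniformly bounded.'' Finiteness of individual centralizer indices gives nothing uniform by itself; the profinite topology must be exploited in an essential way. I expect the cleanest route is the one sketched above: combine Lemma \ref{concise} (openness of the $w^*$-residual of an open subgroup) with a Baire-category argument on the compact space $G^n$ to locate a coset of an open subgroup on which $w$-value conjugacy classes are uniformly bounded, and then push this bound everywhere via Lemma \ref{comb1}. Everything after that is either the cited quantitative theorem or routine bookkeeping with the combinatorics of Section \ref{sec:comb}.
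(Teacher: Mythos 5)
Your overall strategy --- establish a uniform bound $m$ on $|x^G|$ for all $w$-values $x$ in the $w^*$-residual $R$ of $T$, and then invoke the quantitative theorem of \cite{DMS-BFC} --- founders at exactly the obstacle you yourself flag at the end. The Baire category argument applied to the closed sets $C_j=\{(g_1,\dots,g_n)\mid w(g_1,\dots,g_n)\in\Delta_j\}$ produces nonempty interior for \emph{one} $j$, hence a \emph{single} coset tuple $(a_1V,\dots,a_nV)$ on which the class sizes are bounded by some $m_0$; Lemma \ref{comb1} then transfers this bound to $V_w$, i.e.\ to $w$-values all of whose arguments lie in $V$. Your next step, ``applying this to the finitely many cosets of $V$ inside $G$ and taking the maximum of the resulting bounds,'' is not available: Baire does not give a bound on every coset tuple, only on some sub-coset of whichever coset product you restrict to, and iterating the restriction never exhausts a fixed coset. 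Since the generators of $N_i$ are $w$-values $w(g_1,\dots,g_ih,\dots,g_n)$ with the $g_j$ ranging over all of $G$, they do not lie in $V_w$, and no uniform bound on their class sizes is obtained. (A secondary issue: even granting such a bound, $R$ is generated by $w$-values \emph{of $G$} with one entry in $T$, not by $w$-values of $R$, so the claim $w(R)=R$ is unjustified and \cite{DMS-BFC} does not apply verbatim to $R$; nor can it be applied to $G$, whose $w$-values outside $T$ are uncontrolled.)

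The paper avoids ever proving a uniform bound. In Lemma \ref{basic-light} it combines the Baire step with a B.~H.~Neumann--type ``maximal conjugacy class'' argument: choosing $a=w(\tilde a_1,\dots,\tilde a_n)$ with $|a^H|$ maximal among the $w$-values on the good coset tuple, and an FC transversal $b_1,\dots,b_r$ of $C_H(a)$ in $H$, it shrinks the open subgroup into $C_G(b_1,\dots,b_r)$ and shows $[H,w(\tilde a_1\tilde M,\dots,\tilde a_n\tilde M)]\le[H,a]$, which is finite by Lemma \ref{2.3b}. Proposition \ref{inductive-step} then runs an induction over subsets $I\subseteq\{1,\dots,n\}$: for each of the finitely many coset tuples in $\Omega$ it applies Lemma \ref{basic-light} separately, accumulates the resulting finite normal subgroups into one finite $U_I$, and uses Lemmas \ref{2.2-bis} and \ref{M2} to pass from the specific shifted representatives $d_i$ to arbitrary elements of $A_i$ \emph{modulo} $U_I$ and the centralizer of $H$. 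This coset-by-coset accumulation of finite normal subgroups is the mechanism that replaces the (unavailable) global bound on conjugacy class sizes, and it is the ingredient missing from your proposal.
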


It is straightforward  that the $w^*$-residual of $G$ is precisely $w(G)$.  
 Thus Theorem \ref{genN} has the following consequence.    
 
\begin{corollary}\label{profinite-FC} 
 Let $w$ be a multilinear 
commutator word and $G$  a profinite group in which every $w$-value is an FC-element.  Then $w(G)$ has finite commutator subgroup.   
\end{corollary}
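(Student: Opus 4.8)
My plan is to prove the (formally stronger) Theorem \ref{genN} and then read off Corollary \ref{profinite-FC} as the special case $T=G$, using that the $w^*$-residual of $G$ equals $w(G)$. So fix $G,T,w$ as in Theorem \ref{genN} and let $R$ be the $w^*$-residual of $T$ in $G$. By Lemma \ref{ts}, $R$ is (topologically) generated by the set $\Sigma$ of all $w$-values $w(g_1,\dots,g_n)$ having at least one argument in $T$; since $T$ is normal, every member of $\Sigma$ is a $w$-value lying in $T$, hence an FC-element of $G$ by hypothesis. The first thing to note is that $\Sigma$ is closed in $G$: it is the union, over $i=1,\dots,n$, of the images of the compact sets $G\times\dots\times T\times\dots\times G$ (with $T$ in position $i$) under the continuous word map $G^n\to G$, and continuous images of compact sets are compact, hence closed in the Hausdorff group $G$. (In particular the whole set of $w$-values of $G$ is closed.)

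The next step is a Baire category argument reducing matters to a situation where conjugacy classes of $w$-values are uniformly bounded on a large subgroup. For each positive integer $m$ put $Z_m=\{x\in G:|x^G|\le m\}$; this is a closed subset of $G$, and $\Sigma=\bigcup_m(\Sigma\cap Z_m)$ since the elements of $\Sigma$ are FC-elements. As $\Sigma$ is compact and hence a Baire space, some $\Sigma\cap Z_{m_0}$ has non-empty interior in $\Sigma$, so there exist a $w$-value $g_0=w(a_1,\dots,a_n)\in\Sigma$, with $a_{i_0}\in T$ for some index $i_0$, and an open normal subgroup $N$ of $G$ with $\Sigma\cap g_0N\subseteq Z_{m_0}$. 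Set $L=T\cap N$; then $L$ is normal in $G$ and has finite index in $T$. Every element of $\X_w(a_1L,\dots,a_nL)$ has its $i_0$-th argument in $T$ and so lies in $\Sigma$, and it is congruent to $g_0$ modulo $N$, so it lies in $\Sigma\cap g_0N\subseteq Z_{m_0}$. Hence, by Lemma \ref{comb1} applied with the normal subgroup $L$, every $w$-value of $L$ has at most $M:=m_0^{2^n}$ conjugates in $G$.

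Now I would apply the quantitative Neumann-type theorem of \cite{DMS-BFC} to the group $L$, all of whose $w$-values have at most $M$ conjugates in $L$: it gives that $(w(L))'$ is finite. Since $L$ is normal in $G$, so is $w(L)$, and $(w(L))'\le R'$ because $w(L)\le R$ (the $w$-values of $L$ have all their arguments in $L\subseteq T$, so belong to $\Sigma$). As $(w(L))'$ is finite and contained in $R'$, it suffices to prove that the image of $R$ in $G/(w(L))'$ has finite commutator subgroup; replacing $G$ by this quotient, I may therefore assume that $w(L)$ is abelian. Then $L^{(n)}=\delta_n(L)\le w(L)$ by Lemma \ref{lem:delta_k}, so $L^{(n)}$ is abelian and $L$ — hence also $T$, which has a normal soluble subgroup of finite index — is soluble-by-finite.

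It remains to push the uniform bound on $L$ out to all of $R$. Here I would use Lemma \ref{comb2} with $H=L$ to write every $w$-value with an argument in $T$ as a product of boundedly many conjugates of its arguments times an element of $w(L)$, and then, exploiting that $[T:L]$ is finite, that $w(L)$ is abelian, and Lemma \ref{2.3b} to keep the relevant normal closures finite, one shows that $R$ is an FC-group. Since $R$ is closed in $G$, it is then a profinite FC-group, and Shalev's theorem \cite{shalev} that a profinite FC-group has finite commutator subgroup finishes the proof. I expect this last bootstrapping step — transferring the bound from the finite-index subgroup $L$ to the whole of $T$ — to be the main obstacle, the earlier steps being a compactness argument together with the cited combinatorial lemmas and the quantitative result from \cite{DMS-BFC}.
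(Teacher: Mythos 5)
Your opening is sound and genuinely different from the paper's: the Baire category argument on the closed set $\Sigma$, Lemma \ref{comb1}, and the quantitative theorem of \cite{DMS-BFC} do produce a normal subgroup $L\le T$, open in $T$, with $L_w\subseteq\Delta_{M}$ and $w(L)'$ finite, and the reductions in your third and fourth steps are legitimate. The gap is the final bootstrapping step, and it is not a technicality --- it is the entire difficulty of the theorem. Lemma \ref{comb2} writes $w(g_1h_1,\dots,g_nh_n)=ah$ with $h\in L_w$ and $a$ a product of boundedly many conjugates of the $g_i^{\pm1}$; for this to bound the conjugacy class of the $w$-value you would need the $g_i$ to be FC-elements with controlled class size. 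But here the $g_i$ are coset representatives of $L$ in $T$ for at most one coordinate and arbitrary elements of $G$ for the remaining $n-1$ coordinates; even in the case $T=G$ needed for the corollary, a transversal of the open subgroup $L$ in $G$ consists of arbitrary group elements, not $w$-values, so the hypothesis gives no control whatsoever over $a$. Knowing that $ah$ is an FC-element does not help, since a uniform bound on $|(ah)^G|$ is precisely what you are trying to establish. The analogous transversal trick does work in the paper's Proposition \ref{locfin}, but only because there the arguments range over $P^{(k)}$, which is topologically generated by $\delta_k$-values, so the transversal can be chosen to consist of products of FC-elements; nothing similar is available when the arguments range over all of $G$.

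This is exactly the obstacle that the paper's Lemma \ref{basic-light} and Propositions \ref{inductive-step} and \ref{X} are built to overcome: one re-runs the Baire argument separately for each of the finitely many tuples of coset representatives (the set $\Omega$), uses the maximal-conjugacy-class argument of Lemma \ref{basic-light} to obtain $[H,w(\tilde a_1 \tilde M,\dots,\tilde a_n\tilde M)]\le[H,a]$ with $[H,a]^G$ finite, and then splices the finitely many resulting bounds together one coordinate at a time via Lemmas \ref{2.2-bis} and \ref{M2}, inducting on the size of the subset $I\subseteq\{1,\dots,n\}$ of unrestricted coordinates. Your proposal compresses this machinery into one sentence, and as written that sentence does not close. (A side remark: the paper deliberately avoids invoking \cite{DMS-BFC}, proving instead a self-contained non-quantitative statement; your use of it is permissible but does not remove the need for the coordinate-by-coordinate induction.)
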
 

 The key result of the remaining part of this section is the next proposition, 
from which Theorem \ref{genN} will be deduced. 

\begin{proposition}\label{X}
 Let $w=w(x_1,\dots,x_n)$ be a multilinear 
commutator word, $G$ a profinite group and $H$ the topological closure of $\F$ in $G$. 
  Assume that  $A_1,\dots, A_n$ are normal subgroups of $G$ with the property that 
 $$ \X_w(A_1,\dots,A_n) \subseteq \F.$$
Then $[H , w(A_1,\dots,A_n) ]$ is finite.
\end{proposition}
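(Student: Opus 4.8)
First I would set up the reduction. Since $\X_w(A_1,\dots,A_n)\subseteq\F$ and $\F$ is a subgroup, the abstract subgroup generated by these $w$-values lies in $\F$, so taking closures gives $V:=w(A_1,\dots,A_n)\le H$. Next, for a single $w$-value $v\in\X_w(A_1,\dots,A_n)$, Lemma \ref{2.3b} gives that $[\F,v]^G$ is finite; since $v$ then centralizes the image of $\F$, hence of $H=\overline{\F}$, in $G/[\F,v]^G$ (centralizers being closed), $[H,v]$ is finite. As $\X_w(A_1,\dots,A_n)$ is $G$-invariant, $[H,V]$ is the closed subgroup topologically generated by these finite normal subgroups, and I claim it suffices to prove that the image of $V$ in $\bar G:=G/C_G(H)$ is finite: if it is, then since $V_0:=\langle\X_w(A_1,\dots,A_n)\rangle$ is dense in $V$ we get $V_0=\langle v_1,\dots,v_r\rangle(V_0\cap C_G(H))$ for finitely many $w$-values $v_i$, whence $[H,V]=\overline{[H,V_0]}$ lies in the product of the finitely many finite normal subgroups $[H,v_i]^G$ and is finite. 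Finally, the image of $\F$ in $\bar G$ is $\F/Z(\F)$, which is locally finite because $\F$ is an FC-group, and $V_0$ lies in $\F$; so it even suffices to show that the image of $V$ in $\bar G$ is \emph{finitely generated}.

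Next I would extract a quantitative foothold by Baire category. The compact space $A_1\times\cdots\times A_n$ is the union of the closed sets $C_k=\{(b_1,\dots,b_n):|w(b_1,\dots,b_n)^G|\le k\}$ (closed because $(b_i)\mapsto w(b_i)$ is continuous and $\{x:|x^G|\le k\}$ is closed in $G$), the union being everything by hypothesis; hence some $C_{k_0}$ has non-empty interior, giving $a_i\in A_i$ and an open normal $N\trianglelefteq G$ with every element of $\X_w(a_1B_1,\dots,a_nB_n)$ having at most $k_0$ conjugates, where $B_i:=A_i\cap N$. Putting $T:=A_1\cap\cdots\cap A_n\cap N\trianglelefteq G$, which is contained in every $B_i$, Lemma \ref{comb1} gives that every element of $T_w$ has at most $k_0^{2^n}$ conjugates in $G$, and the multilinear-commutator version of Neumann's theorem \cite{DMS-BFC} applied to $T$ gives that $w(T)'$ is finite. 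Factoring out the finite normal subgroup $w(T)'$ (harmless for hypothesis and conclusion) I may assume $w(T)$ is abelian.

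Then I would spread the bound and conclude. Fixing finitely many coset representatives $a_{i,1},\dots,a_{i,r_i}$ of $T$ in $A_i$, Lemma \ref{comb2} writes an arbitrary $w$-value $w(c_1,\dots,c_n)$, $c_i\in A_i$, as $a\cdot h$ with $h\in T_w$ and $a$ a product of at most $t_n$ conjugates of the $a_{i,j}^{\pm1}$; since $w(c_1,\dots,c_n)$ and $h$ lie in $\F$, so does $a$. Iterating the Baire argument over the finitely many coset-tuples $(j_1,\dots,j_n)$ (each time shrinking $N$, hence $T$) should produce one open normal $N$ and one bound $k$ for which every element of $\X_w(A_1,\dots,A_n)$ has at most $k$ conjugates in $G$. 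From this one gets $w(A_1,\dots,A_n)'$ finite (again by \cite{DMS-BFC} and the combinatorics of Section \ref{sec:comb}), so after a further reduction $V$ may be taken abelian; and then the uniform conjugacy bound together with conciseness — in the form of Lemma \ref{concise} and Lemma \ref{ts} applied to the open subgroup $N$, which makes the $w^*$-residual of $N$ an open subgroup of $w(G)$ generated by boundedly-many-conjugate $w$-values — should force the image of $V$ in $\bar G$ to be finitely generated, hence finite by local finiteness. With the first paragraph this finishes the proof.

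I expect the hardest point to be the third step: turning the single ``good coset'' supplied by Baire into a uniform bound on the conjugacy-class sizes of all of $\X_w(A_1,\dots,A_n)$, and then upgrading ``bounded conjugacy on the verbal subgroup'' to genuine finiteness of its image in $\bar G$. The obstruction in the first half is the coset bookkeeping: since the $A_i$ are bigger than the open subgroups $B_i$, a careless iteration of Baire keeps shrinking $N$ and can wreck bounds already obtained — the fix is to pin down the derived subgroup $w(T)$ first (it becomes trivial in $G/w(T)'$ and its relevant features survive further shrinking), and only afterwards to absorb the finitely many residual coset contributions via Lemmas \ref{comb2}, \ref{M2} and \ref{2.2-bis}. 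The obstruction in the second half is that bounded conjugacy alone does not give finiteness inside a locally finite group; this is exactly where conciseness of multilinear commutators (through Lemma \ref{concise}) must enter, to convert a finite-index verbal statement into a finiteness statement. Getting both halves to dovetail — a single $N$, a single $k$, and then finiteness rather than mere boundedness — is where the real work of the proof lies.
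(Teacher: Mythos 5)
Your first paragraph (reducing to finiteness of the image of $V=w(A_1,\dots,A_n)$ in $G/C_G(H)$, via finiteness of $[H,v]$ for a single $w$-value $v$) and your Baire-category step are sound and match the opening moves of the paper's Lemma \ref{basic-light}. The argument breaks down at your third step, and not on a technicality: the claim that one can produce a single open normal $N$ and a single integer $k$ such that \emph{every} element of $\X_w(A_1,\dots,A_n)$ has at most $k$ conjugates in $G$ is exactly what is not available, and the whole architecture of the paper's proof exists to avoid needing it. Two concrete obstructions. First, $T=A_1\cap\dots\cap A_n\cap N$ need not have finite index in the individual $A_i$ (the $A_i$ are arbitrary closed normal subgroups; in the application to Theorem \ref{genN} most equal $G$ while one is a general normal subgroup), so ``finitely many coset representatives of $T$ in $A_i$'' need not exist; one must work with $A_i\cap N$ separately, as the paper does. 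Second, and more seriously, when you write $w(c_1,\dots,c_n)=ah$ via Lemma \ref{comb2}, the factor $a$ is a product of conjugates of the transversal elements $a_{i,j}^{\pm1}$, and these are arbitrary elements of $A_i$ --- they are \emph{not} $w$-values, hence not known to be FC-elements of uniformly bounded class size. Knowing $a\in\F$ (because $a=w(c_1,\dots,c_n)h^{-1}$) gives $|a^G|<\infty$ but no bound independent of the $c_i$; the only estimate available is $|a^G|\le|w(c_1,\dots,c_n)^G|\cdot|h^G|$, which is circular. (Contrast Proposition \ref{locfin}, where this device works precisely because the transversal there can be chosen among products of $\delta_k$-values, which are FC-elements of bounded class size.) Re-running Baire over the finitely many coset tuples does not repair this: each run yields only a \emph{sub}-coset with its own bound, these sub-cosets do not cover the coset, and shrinking $N$ changes the transversal.

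The paper therefore never seeks a uniform bound on class sizes. In Lemma \ref{basic-light} it picks, inside the good coset supplied by Baire, an element $a=w(\tilde a_1,\dots,\tilde a_n)$ whose class $a^H$ has \emph{maximal} size, and uses the classical maximality trick ($(va)^H=\{va^{b_i}\}$, whence $[H,va]\le[H,a]$) to conclude that $[H,w(\tilde a_1\tilde M,\dots,\tilde a_n\tilde M)]$ lies in the single finite subgroup $[H,a]^G$ furnished by Lemma \ref{2.3b}. It then spreads this from shifted cosets to all of $w(A_1,\dots,A_n)$ by an induction over subsets $I\subseteq\{1,\dots,n\}$ (Proposition \ref{inductive-step}), working modulo an accumulating finite normal subgroup $U_I$ and the centralizer of $H$ modulo $U_I$, and it is there that Lemmas \ref{2.2-bis} and \ref{M2} enter --- that is, it controls the commutators $[H,\cdot]$ directly rather than the conjugacy class sizes. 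Your closing appeal to conciseness (Lemma \ref{concise}) to convert a finite-index statement into a finiteness statement is likewise not worked out and concerns $w(G)$ rather than $w(A_1,\dots,A_n)$. So the key mechanism of the actual proof --- maximal class size plus the $U_I$/quotient-by-centralizer induction --- is missing, and the route proposed in its place does not go through.
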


The following lemma can be seen as a development related to Lem\-ma 2.4 in \cite{dieshu} and Lemma 4.5 in \cite{wie}.

\begin{lemma} \label{basic-light} 
Assume the hypotheses  of Proposition \ref{X}, 
 with 
  $A_1,\dots,$  $A_n$ being normal subgroups of $G$ with the property that 
 $ \X_w(A_1,\dots,A_n)$ $\subseteq \F.$
Let  $M$ be an open normal subgroup of $G$ and $a_i\in A_i$ for $i=1,\dots,n$. Then there exist elements $\tilde a_i\in a_i (A_i\cap M)$  
 and an open normal subgroup $\tilde M$ of $M$, such that the order of 
 $$[H,w(\tilde a_1 (A_1\cap \tilde M),\dots,\tilde a_n (A_n\cap \tilde M))]^G$$ 
is finite. 
\end{lemma}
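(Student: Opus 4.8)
\medskip
\noindent\emph{Proposed approach.} Set $w_0=w(\tilde a_1,\dots,\tilde a_n)$; since $w_0\in\X_w(A_1,\dots,A_n)\subseteq\F$ it is an FC-element, and by Lemma~\ref{2.3b} the subgroup $[\F,w_0]^{G}$ is finite. The plan is to choose $\tilde a_i$, $\tilde M$ and a \emph{finite} normal subgroup $N$ of $G$ so that $\X_w(\tilde a_1(A_1\cap\tilde M),\dots,\tilde a_n(A_n\cap\tilde M))\subseteq w_0N$, and to observe that this already yields the conclusion. Indeed, granting this, $V:=w(\tilde a_1(A_1\cap\tilde M),\dots,\tilde a_n(A_n\cap\tilde M))\le\overline{\langle w_0\rangle}\,N$. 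Since $C_G(w_0)$ is open, the set $\{[h,w_0]:h\in H\}$ is finite and coincides with $\{[\delta,w_0]:\delta\in\F\}$, so $[H,w_0]=[\F,w_0]$; as $[\F,w_0]$ is normalised by $w_0$ this gives $[H,\overline{\langle w_0\rangle}]=[\F,w_0]$. Writing a typical $v\in V$ as $w_0^{\alpha}m$ with $m\in N$ and using $[h,v]=[h,m]\,[h,w_0^{\alpha}]^{m}$, one gets $[h,v]\in N\cdot[\F,w_0]^{G}$ for every $h\in H$; hence $[H,V]$ --- and a fortiori $[H,V]^{G}$ --- is contained in the finite normal subgroup $N\cdot[\F,w_0]^{G}$.

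To produce the inclusion $\X_w(\tilde a_1(A_1\cap\tilde M),\dots)\subseteq w_0N$ I would invoke Lemma~\ref{M2}. It suffices to find an open normal subgroup $\tilde M\le M$ and a finite normal subgroup $N\le\tilde M$ of $G$ with $w_J(A_i;A_l\cap\tilde M)\le N$ for every proper subset $J\subsetneq\{1,\dots,n\}$: passing to $\bar G=G/N$, where (because $N\le\tilde M$) the image of $A_l\cap\tilde M$ is exactly $\bar A_l\cap\overline{\tilde M}$, the hypotheses of Lemma~\ref{M2} with $I=\{1,\dots,n\}$ hold, so $\bar w(\tilde a_1b_1,\dots,\tilde a_nb_n)=\bar w(\tilde a_1,\dots,\tilde a_n)$ for all $b_i\in A_i\cap\tilde M$ and any $\tilde a_i\in A_i$, which is precisely the required inclusion (so here one may simply take $\tilde a_i=a_i$). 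Moreover such an $N$ is automatically available as soon as each $w_J(A_i;A_l\cap\tilde M)$ with $J$ proper is finite: every such subgroup is normal in $G$ and, having a variable run over a normal subgroup contained in $\tilde M$, is itself contained in $\tilde M$, so one takes $N=\prod_{J\subsetneq\{1,\dots,n\}}w_J(A_i;A_l\cap\tilde M)$.

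Everything therefore reduces to the claim that there is an open normal subgroup $\tilde M\le M$ for which $w_J(A_i;A_l\cap\tilde M)$ is finite for every proper $J$. This is the combinatorial core, and I would attack it by downward induction on $|J|$ together with a compactness argument. The guiding idea is that as $\tilde M$ shrinks the generating $w$-values of $w_J(A_i;A_l\cap\tilde M)$ --- a compact set of FC-elements --- are squeezed towards $1$, while Lemmas~\ref{2.2-bis} and~\ref{M2} transport the vanishing (resp. rigidity) from the subgroups with fewer free variables to those with more. The delicate point is passing from ``generators close to $1$'' to ``subgroup finite'': for $\tilde M$ small one should show that $C_G\!\big(w_J(A_i;A_l\cap\tilde M)\big)$ stabilises to an \emph{open} subgroup of $G$ (an ascending chain of closed subgroups whose union is open must stabilise), so that $w_J(A_i;A_l\cap\tilde M)$ becomes a BFC-group; being normal, contained in $\F$ and generated by $w$-values, it is then finite by B.\,H.~Neumann's theorem together with Shalev's theorem that a profinite FC-group has finite commutator subgroup (and at this stage the representatives $\tilde a_i$ may also have to be adjusted, one variable at a time).

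The principal obstacle is exactly this last step. A descending family of \emph{infinite} closed (even normal) subgroups of a profinite group can have a finite --- or trivial --- intersection without any member being finite, so the compactness argument cannot be run naively; it must be interlocked with the FC hypothesis and with Lemmas~\ref{2.2-bis} and~\ref{M2} in the right order, and throughout one has to keep control of $[H,\,\cdot\,]^{G}$ rather than the a priori larger $[G,\,\cdot\,]$ or the derived subgroup. Once the claim of the previous paragraph is in hand, the first two paragraphs complete the proof.
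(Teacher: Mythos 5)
There is a genuine gap, and it is not just the one you flag at the end: the statement you reduce the lemma to is false in general, so the reduction itself cannot be repaired. You want an open normal $\tilde M\le M$ and a finite normal $N$ with $w_J(A_i;A_l\cap\tilde M)\le N$ for every proper $J\subsetneq\{1,\dots,n\}$, equivalently (via Lemma~\ref{M2}) that the whole set $\X_w(\tilde a_1(A_1\cap\tilde M),\dots,\tilde a_n(A_n\cap\tilde M))$ lies in a single coset $w_0N$ of a finite normal subgroup. Take $G$ to be the Heisenberg group over $\mathbb Z_p$ and $w=[x_1,x_2]$ with $A_1=A_2=G$: every commutator is central, hence an FC-element, so the hypotheses of Proposition~\ref{X} hold; but $w_{\{1\}}(G;G\cap\tilde M)=[G,\tilde M]$ is a nontrivial closed subgroup of $Z(G)\cong\mathbb Z_p$ and therefore infinite for every open $\tilde M$, and likewise $\X_w(\tilde a_1\tilde M,\tilde a_2\tilde M)=w_0[\tilde a_1,\tilde M][\tilde M,\tilde a_2][\tilde M,\tilde M]$ is infinite, so it is not contained in any $w_0N$ with $N$ finite. (The lemma itself is trivially true here because $H=Z(G)$ centralizes $w(G)$.) The moral is that one cannot hope to make the subgroups $w_J(\,\cdot\,;\,\cdot\,)$, or even the value sets, finite; only the commutator $[H,\,\cdot\,]$ can be controlled, and your first paragraph already concedes too much by trying to trap the values in a finite coset.

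The paper's proof takes a different and essentially unavoidable route. It applies the Baire category theorem to the closed sets $C_j$ of tuples in $a_1(A_1\cap M)\times\cdots\times a_n(A_n\cap M)$ whose $w$-value has at most $j$ conjugates, producing a coset neighbourhood $\z_1Z_1\times\cdots\times\z_nZ_n$ on which all values lie in some $\Delta_m$; it then chooses $a=w(\tilde a_1,\dots,\tilde a_n)$ in that neighbourhood with $|a^H|$ \emph{maximal}, picks a transversal $b_1,\dots,b_r$ of $C_H(a)$ in $H$ inside $\F$, and shrinks to $\tilde M\le Z\cap\bigcap_g C_G(b_1,\dots,b_r)^g$. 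For any perturbation $w(\tilde a_1v_1,\dots,\tilde a_nv_n)=va$ with $v\in\tilde M$, the elements $va^{b_i}$ are distinct and their number equals the allowed maximum, which forces $(va)^H=\{va^{b_i}\}$ and hence $[H,va]\le[H,a]$; Lemma~\ref{2.3b} then finishes. Your third paragraph gestures at compactness but never isolates this maximality argument, which is the actual content of the lemma; without it the "generators close to $1$ implies subgroup finite" step you identify as the obstacle cannot be carried out, as the example above shows.
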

\begin{proof}
Throughout the proof, whenever $K$ is a subgroup of $G$ we  write $K_i$ for $ A_i\cap K$.

For each natural number $j$ consider the set $\Delta_j$ of elements $g \in G$ such that $|G:C_G(g)| \le j$.  
 Note that the sets $\Delta_j$ are closed (see for instance   \cite[Lemma 5]{LP}). 
Consider the sets  
$$C_j=\{(\z_1,\dots,\z_n) \mid  \z_i\in a_iM_i {\textrm{ and }} w(\z_1,\dots,\z_n) \in \Delta_j\}.$$
Each set $C_j$ is closed, being  the inverse image in $a_1  M_1 \times \cdots \times a_n M_n$ of the closed set  $\Delta_j$ under the
continuous map $(g_1, \dots , g_n) \mapsto w(g_1, \dots , g_n)$. 
Moreover the union of the sets $C_j$ is the whole $a_1 M_1 \times \cdots \times a_n  M_n$. 
By the Baire category theorem (cf. \cite[p.\ 200]{Ke}) at least one of the sets $C_j$ has nonempty interior. 
Hence, there exist a natural number $m$, some elements
$\y_i\in a_i M_i$ and a normal open subgroup
$Z$ of $G$ such that 
$$w(\y_1 Z_1,\dots,\y_nZ_n)\subseteq \Delta_{m}.$$ By replacing $Z$ with $Z\cap M$, if necessary,
we can assume that $Z\le M$.

 Choose in 
 $\X_w(\y_1Z_1,\dots,\y_n Z_n)$ an element $a=w(\tilde a_1,\dots,\tilde a_n)$ such that the number of 
conjugates of $a$ in $H$ 
is maximal among the elements of 
 $\X_w(\y_1 Z_1,\dots,\y_n Z_n)$, 
that is,  $|a^H|\ge |g^H|$ for any
 $g\in \X_w(\y_1Z_1,\dots,$ $\y_n Z_n)$. 

  Since $\F$ is dense in $H$, 
 we can choose a right transversal   $b_1,\dots, b_r$ of $C_H(a)$ in $H$ consisting of  FC-elements. 
Thus  $a^H = \{a^{b_i} | i = 1, \dots, r\}$, where 
 $a^{b_i}\ne a^{b_j}$ if $i\ne j$.
  Let $ \tilde M$ be the intersection of $Z$ and all $G$-conjugates of $ C_G ( b_1 ,\dots, b_r )$: 
 $$\tilde M  =\left(\bigcap_{g\in G}  C_G (  b_1 ,\dots, b_r )^g \right) \cap Z$$ 
  and note that $\tilde M$ is open in $G$. 

Consider the element  $w(\tilde a_1v_1,\dots,\tilde a_nv_n)$ where   $v_i \in \tilde M_i$ for $i=1,\dots,n$.  
 As $w(\tilde a_1v_1,\dots,\tilde a_nv_n) \tilde M_i=a \tilde M_i$ in the quotient group $G/\tilde M_i$, we 
  have 
$$w(\tilde a_1v_1,\dots,\tilde a_nv_n)=va,$$
 for some $v\in \tilde M\le  C_G ( b_1 ,\dots, b_r )$. 
It follows that $(va)^{b_i} = va^{b_i}$ for each $i =1,\dots, r$. Therefore the elements $va^{b_i}$ form the conjugacy class
 $(va)^H$ 
because they are all different and their number is the allowed maximum. So, for an arbitrary element $h\in H$ there exists $b\in\{b_1 ,\dots, b_r\}$ such that
$(va)^h= va^b$ and hence $v^h a^h = va^b$. Therefore $[h, v] = v^{-h}v=a^h a^{-b}$ and so $[h, v]^a =a^{-1} a^h a^{-b} a = [a,h][b,a] \in [H,a].$
 Thus $[H,v]^a \le [H,a]$ and $$[H, va]=[H,a] [H,v]^a \le [H, a].$$ 
Therefore $[H,w(\tilde a_1 \tilde M ,\dots,\tilde a_n \tilde M)]\le [H,a]$.
Lemma \ref{2.3b} states that the abstract group $[ \Delta(G),a]^G$  
 has finite order and thus the same holds for $[H,a]^G$.
 The result follows.
\end{proof}

For the reader's convenience, the most technical part of the proof of Proposition \ref{X}  is isolated in the following proposition.

\begin{proposition}\label{inductive-step} 
Assume the hypotheses  of Proposition \ref{X}, 
 with 
  $A_1,\dots, A_n$ being normal subgroups of $G$ such that
 $ \X_w(A_1,\dots,A_n) \subseteq \F.$ 
Let $ I $ be a nonempty  subset of $\{1,\dots,n\}$      and assume that there exist a normal subgroup $U$ 
of $G$ of  finite order  and  
an open normal subgroup $M$ of $G$  such that
\[[H, w_J (A_i; A_l\cap M)]\le U \quad \textrm{for every}\ J \subsetneq I.\]
Then there exist a finite normal subgroup $U_I$ of $G$
containing $U$ 
 and  an open normal subgroup $M_I$ of $G$ 
  contained in $M$ such that 
\[ [H,w_I (A_i; A_l\cap M_I)]\le U_I.\]
\end{proposition}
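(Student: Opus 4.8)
The plan is to imitate the strategy of Lemma \ref{basic-light}, but now working inside the quotient $G/U$ and with the multi-index set $I$ in place of a single index $s$. Throughout I would write bars for images modulo $U$; since $U$ is finite, $[H,X]$ is finite iff $[\bar H,\bar X]$ is finite, and an open normal subgroup of $\bar G$ pulls back to an open normal subgroup of $G$, so nothing is lost. The hypothesis says $[\bar H, \overline{w_J(A_i;A_l\cap M)}]=1$ for every $J\subsetneq I$, i.e. all these $w_J$-subgroups are trivial mod $U$; in particular Lemma \ref{M2} becomes applicable with $M$ in the role of the distinguished normal subgroup and $\bar A_i\cap \bar M$ the ``small'' part, giving the crucial substitution identity $w_I(g_ih_i;h_l)=w_I(g_i;h_l)$ inside $\bar G$ whenever the $g_i$ range over $A_i$ and the $h$'s over $A_i\cap M$. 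This identity is what lets the multi-index case be handled by essentially the same ``maximal conjugacy class'' argument that worked for one variable.

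First I would fix elements $a_i\in A_i$ for $i\in I$ (the coordinates outside $I$ will be absorbed into the $B_l=A_l\cap M$ part throughout). Exactly as in Lemma \ref{basic-light}, I apply the Baire category theorem to the closed cover of $\prod (a_i M_i)\times\prod M_l$ by the sets where the relevant $w_I$-value lands in $\Delta_j$; this produces a number $m$, representatives $\tilde a_i$, and an open normal subgroup $Z\le M$ of $G$ so that all values $w_I(\tilde a_i Z_i; Z_l)$ lie in $\Delta_m$, hence have at most $m$ conjugates in $H$. Then I pick inside $\mathcal X_{w_I}(\tilde a_i Z_i;Z_l)$ an element $a=w_I(\tilde a_i;\cdot)$ whose $H$-conjugacy class $|a^H|$ is maximal, choose an FC right transversal $b_1,\dots,b_r$ of $C_H(a)$ in $H$ using density of $\F$, and set $M_I$ to be $Z$ intersected with all $G$-conjugates of $C_G(b_1,\dots,b_r)$, an open normal subgroup of $G$.

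Now for arbitrary $v_i\in A_i\cap M_I$ ($i\in I$) and $h_l\in A_l\cap M_I$, the Lemma \ref{M2} identity (valid mod $U$) gives $\overline{w_I(\tilde a_i v_i;h_l)}=\overline{w_I(\tilde a_i;h_l)}\cdot\bar v$ for some $\bar v\in\bar M_I$, i.e. $w_I(\tilde a_i v_i;h_l)\equiv va\pmod U$ with $v\in M_I\le C_G(b_1,\dots,b_r)$ — here I also need that modifying the $h_l$ in the $\notin I$ slots keeps the value in the same maximal class, which again follows since such a value lies in $\mathcal X_{w_I}(\tilde a_iZ_i;Z_l)\subseteq\Delta_m$ and the transversal was chosen for the maximal class. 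Then the computation of Lemma \ref{basic-light} runs verbatim modulo $U$: $v$ commutes with each $b_j$, so the $r$ elements $va^{b_j}$ are distinct and form all of $(va)^{\bar H}$, whence $[\bar h,\bar v]^{\bar a}\in[\bar H,\bar a]$ for every $h\in H$, giving $[\bar H,\overline{va}]\le[\bar H,\bar a]$ and therefore $[\bar H, \overline{w_I(\tilde a_i(A_i\cap M_I);A_l\cap M_I)}]\le[\bar H,\bar a]$. By Lemma \ref{2.3b} the subgroup $[\Delta(G),a]^G$ is finite, hence so is $[H,a]^G$; pulling back, $[H,a]^G U=:U_I^{(a)}$ is a finite normal subgroup of $G$ containing $U$.

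The only gap between this and the stated conclusion is that the above bounds $[H, w_I(\tilde a_i(A_i\cap M_I);A_l\cap M_I)]$ for the particular coset representatives $\tilde a_i$ attached to one choice of starting point $a_i$, whereas $w_I(A_i;A_l\cap M_I)$ is generated by all values $w_I(g_i;h_l)$ with $g_i$ ranging over all of $A_i$. I expect this ``from local to global'' passage to be the main obstacle, and I would resolve it by a standard compactness argument: cover $A_i$ (for $i\in I$) by finitely many cosets of the $M_I$ one gets from each base point — more precisely, run the construction above once, get $M_I^{(1)}$, then for each of the finitely many cosets $a M_I^{(1)}$ inside $\prod_{i\in I}A_i$ repeat to get smaller open normal subgroups and larger finite $U$'s, and intersect all the $M_I$'s / take the product of all the finite $U$'s obtained (finitely many steps, since the index $|A_i:A_i\cap M_I|$ is finite at each stage and one only needs to refine finitely often). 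Concretely: choose $M_I\le Z$ open normal such that, writing $T_i$ for a transversal of $A_i\cap M_I$ in $A_i$ for $i\in I$, the maximal-class element and transversal $b_1,\dots,b_r$ can be chosen simultaneously for all $w_I(t_i Z_i; Z_l)$ with $t_i\in T_i$ — possible because there are finitely many such tuples and finitely many maximal-class conditions to satisfy — and then the argument of the previous paragraph applied coset-by-coset shows $[\bar H,\overline{w_I(t_i v_i;h_l)}]\le [\bar H,\bar a]$ uniformly; since every element of $\mathcal X_{w_I}(A_i;A_l\cap M_I)$ has this form and the finitely many $[H,a]^G$ are finite, their product together with $U$ gives the required finite normal $U_I$ with $[H,w_I(A_i;A_l\cap M_I)]\le U_I$.
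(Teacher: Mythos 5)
Your overall architecture (Baire category, a maximal conjugacy class, Lemma \ref{2.3b}, and a finite transversal of $A_i\cap M$ in $A_i$ to pass from one coset to all of $A_i$) matches the paper's, and the transversal device in your last paragraph is exactly the paper's finite set $\Omega$. But there is a genuine gap in the central step: the maximal-class argument of Lemma \ref{basic-light} cannot be run directly on the $w_I$-configuration. That argument needs every element under consideration to be congruent, modulo the shrinking open subgroup, to one \emph{fixed} element $a$ of maximal class, so that it can be written as $va$ with $v\in M_I\le C_G(b_1,\dots,b_r)$. In your setting the generators of $w_I(A_i;A_l\cap M_I)$ have their non-$I$ entries ranging over the whole subgroup $A_l\cap M_I$; such a value is congruent to $w_I(g_i;1)=1$ modulo $M_I$ (a multilinear commutator with a trivial entry vanishes), not to a fixed maximal-class element $a=w_I(\tilde a_i;\tilde h_l)$, which in general lies outside $M_I$. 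So the factor $v=w_I(\tilde a_iv_i;h_l)a^{-1}$ need not lie in $M_I$, need not centralize the transversal, and the counting argument breaks. Your appeal to ``the value lies in $\Delta_m$'' only bounds its class size; it does not exhibit it as a translate of $a$ by an element centralizing $b_1,\dots,b_r$. Relatedly, the identity you attribute to Lemma \ref{M2} (``$\overline{w_I(\tilde a_iv_i;h_l)}=\overline{w_I(\tilde a_i;h_l)}\cdot\bar v$'') is not what that lemma says --- it gives exact equality with no factor $\bar v$ --- and conflates two different mechanisms.

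The paper avoids this by never applying the Baire/maximal-class argument to $w_I$-values at all: for each tuple $\underline c$ of coset representatives it applies Lemma \ref{basic-light} to the \emph{full} word $w(d_1(A_1\cap M_{\underline c}),\dots,d_n(A_n\cap M_{\underline c}))$, with every coordinate pinned to a coset, and then, in the quotient $\bar G=G/Z$ where $Z/U_I$ is the center of $HU_I/U_I$, uses Lemma \ref{2.2-bis} to convert the resulting identity $\overline{w(d_1(A_1\cap M_I),\dots,d_n(A_n\cap M_I))}=1$ into $\overline{w_I(d_i(A_i\cap M_I);A_l\cap M_I)}=1$, i.e.\ to let the non-$I$ coordinates range over the whole subgroup; only then does Lemma \ref{M2} (with the inductive hypothesis on proper subsets $J\subsetneq I$) replace $d_i$ by an arbitrary $k_i\in A_i$. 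Lemma \ref{2.2-bis} is the ingredient your proposal is missing, and without it the local-to-global passage in the non-$I$ coordinates does not go through. A secondary point: to apply Lemmas \ref{2.2-bis} and \ref{M2} you need the subgroups $w_J(\cdot)$ to be \emph{trivial} in the quotient, whereas working modulo $U$ only gives $[\bar H,\overline{w_J(\cdot)}]=1$; this is why the paper quotients by the preimage $Z$ of the centralizer of $H$ rather than by $U_I$ alone, using that $w_J(\cdot)\le H$ because its generators are FC-elements.
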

\begin{proof}
 For each $i=1,\dots,n$ consider a  right transversal $C_i$  of $A_i\cap M$ in $A_i$, and 
  let $\Omega$ be the set of $n$-tuples  $\underline{c}=(c_1, \dots , c_n)$ where $c_r \in C_r$ if $r\in I$ and $c_r=1$ otherwise. 
   Note that the set $\Omega$ is finite, since $C_r$  is finite for every $r$. 
 For any  $n$-tuple  $\underline{c}=(c_1, \dots , c_n) \in \Omega$,  by  Lemma \ref{basic-light}, 
 there exist elements $d_i\in  c_i(A_i\cap M)$ and an open normal subgroup $M_{\underline c}$ of  $G$ such that the order of 
$$[H,w(d_1  (A_1\cap M_{\underline c}),\dots,d_n (A_n\cap M_{\underline c}))]^G$$
 is finite.
Let 
\begin{eqnarray*}
M_I&=&M \cap \bigg( \bigcap_{\underline{c}\in \Omega}M_{\underline c}\bigg),\\
U_I&=& U \, \prod_{\underline{c}\in \Omega}[H,w(d_1   (A_1\cap M_{\underline c}),\dots,d_n  (A_n\cap M_{\underline c}))]^G.
\end{eqnarray*}

As $\Omega$ is finite,
 it follows that 
 $M_I$ is open in $G$ and $U_I$ has finite order.

Let $Z/U_I$ be the center of $HU_I/U_I$ in the quotient group $G/U_I$ and let $\bar G=G/Z$.
 We will use the bar notation to denote images of elements or subgroups in the quotient group $\bar G$. 
 
Let us consider an arbitrary generator $w_I(k_i,h_l)$ of $w_I (A_i; A_l\cap M_I)$, where  $k_i \in A_i$ and $h_l \in A_l\cap M_I$.
Let  $\underline{c}=(c_1, \dots , c_n) \in \Omega$ be the $n$-tuple such that 
$$k_i\in c_{i}(A_i \cap M)$$
 if $i\in I$ and $c_i=1$ otherwise. 
 Let $d_1,\dots,d_n$ be the elements as above, corresponding to the  $n$-tuple  $\underline{c}$. 
 Then, by definition of $U_I$,  
$$[H,w(d_1 (A_1\cap M_I),\dots,d_n (A_n\cap M_I))]\le U_I,$$
 that is 
$$\overline{w(d_1 (A_1\cap  M_I),\dots,d_n (A_n\cap M_I))}=1,$$
 in the quotient group $\bar G=G/Z$. 
 We deduce from  Lemma \ref{2.2-bis}  that 
\begin{equation}\label{step}
 \overline{w_I(d_i (A_i\cap M_I);(A_l\cap M_I))}=1. 
\end{equation}
Moreover, as $c_{i}(A_i\cap M)=d_i(A_i\cap M)$, we have that $k_i=d_iv_i$ for some $v_i\in A_i\cap M$.
It also follows from our assumptions that  $$\overline{w_J (A_i; A_l\cap M)}=1$$
for every proper subset $J$ of $I$. Thus we can apply Lemma \ref{M2} and  obtain that 
$$w_I (\overline k_i;\overline h_l)=w_I (\overline d_i\overline v_i;\overline h_l)=
w_I (\overline d_i;\overline h_l)=1,$$
where in the last equality we have used (\ref{step}).
Since $w_I(k_i,h_l)$ was an arbitrary generator of $ w_I (A_i; A_l\cap M_I)$, 
 it follows that $$\overline{w_I (A_i; A_l\cap M_I)}=1,$$
  that is \[ [H,w_I (A_i; A_l\cap M_I)]\le U_I,\]
as desired.
\end{proof}

\begin{proof}[Proof of Proposition \ref{X}.]
Recall that $w=w(x_1,\dots,x_n)$ is a multilinear 
commutator word, $G$ is a profinite group,  $H$ is the closure of  $\F$ and  $A_1,\dots, A_n$ are normal subgroup of $G$ with the property that 
 $$ \X_w(A_1,\dots,A_n) \subseteq \F.$$
We want to prove that $[H , w(A_1,\dots,A_n) ]$ is finite.

We will prove that for every $s=0,\dots,n$  there exist a finite normal subgroup $U_s$ of $G$ and
an open normal subgroup $M_s$ of $G$  such that whenever $I$ is a subset of $\{1,\dots,n\}$ of size at most $s$ we have 
\[ [H,w_I (A_i; A_l\cap M_s)]\le U_s.\]
Once this is done, the proposition  will follow taking $s=n$.

Assume that $s=0$. 
We apply Lemma \ref{basic-light} with $M=G$ and $a_i=1$ for every $i=1,\dots,n$. 
Thus  there exist  $\tilde a_1,\dots, \tilde a_n \in G$ and  an open normal subgroup   $M_0$ of $G$, such that the order of 
$$U_0=[H,w( \tilde a_1 (A_1\cap M_0),\dots , \tilde a_n (A_n\cap M_0)]^G$$
 is finite.

Let $Z/U_0$ be the center of $HU_0/U_0$ in the quotient group $G/U_0$ and let $\bar G=G/Z$.
We have that 
$$\overline{w( \tilde a_1 (A_1\cap M_0),\dots , \tilde a_n(A_n\cap M_0))}=1,$$
 so it follows from Lemma \ref{2.2-bis} that
$$\overline{w( A_1\cap M_0 ,\dots , A_n\cap M_0)}=1,$$
 that is, $[H,w( A_1\cap M_0 ,\dots ,A_n\cap M_0)]\le U_0$. This proves the proposition in the case where $s=0$.

Now assume that $s\ge 1$. Choose
$I\subseteq\{1,\dots,n\}$ with $|I|=s$. By induction, the hypotheses of Proposition \ref{inductive-step} are satisfied with $U=U_{s-1}$
and $M=M_{s-1}$, so there exist a finite normal subgroup $U_I$ of $G$  containing $U_{s-1}$ 
  and   an open normal subgroup $M_I$ of $G$  contained in $M_{s-1}$    such that
\[ [H,w_I (A_i; A_l\cap M_I)]\le U_I.\]
Let
 $$M_s=\bigcap_{|I|=s}M_I, \quad U_s=\prod_{|I|=s}U_I,$$
  where the intersection (resp. the product) ranges over all subsets $I$ of $\{1,\dots,n\}$ of size $s$.

As there is a finite number of choices for $I$, it follows that $U_s$ (resp. $M_s$) has  finite order (resp. finite index 
in $G$).  Note that $M_s\le M_{s-1}$ and $U_{s-1}\le U_s$.
Therefore
\[ [H,w_I (A_i; A_l\cap M_s)]\le U_s\] 
 for every $I\subseteq\{1,\dots,n\}$ with $|I|\le s$. 
This completes the induction  and the proof of the proposition.
\end{proof}

\begin{proof}[Proof of Theorem \ref{genN}.]
 Let $w=w(x_1,\dots,x_n)$ be a multilinear 
commutator word, $G$ a profinite group and $T$ a normal subgroup of $G$.  
 For $i=1, \dots n$, let $X_i$ be the set of $w$-values   $w(g_1,...,g_n)$ such that $g_i$ belongs to $T$. 
 Obviously $X_i  \subseteq T$ and therefore 
  $X_i \subseteq \F$ for every $i$. It follows   from Proposition \ref{X} that $[H, \langle X_i \rangle]$ is finite for every $i$. 
  By Lemma \ref{ts},   the $w^*$-residual of $T$ is the subgroup $N$ generated by the set $X= X_1 \cup \dots  \cup X_n $. 
 Thus $[H, N]= \prod_{i=1}^{n}[H,\langle X_i \rangle]$ is finite. Finally, note that $N\le H$ and so $N'\le [H,N]$ is also finite.
 \end{proof}

\begin{corollary}\label{infinite}
Let $w$ be a multilinear commutator word and let $G$ be a profinite group with restricted centralizers of $w$-values.
 If $G$ has a $w$-value of infinite order, then $w(G)$ is abelian-by-finite. 
\end{corollary}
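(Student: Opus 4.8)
The plan is to reduce the statement to Theorem \ref{genN}. Let $a$ be a $w$-value of infinite order. Since $C_G(a)$ contains the infinite procyclic subgroup $\overline{\langle a\rangle}$, it is not finite, so by hypothesis it is open; hence $a$ is an FC-element and the conjugacy class $a^G$ is finite. Put $D=C_G(\langle a^G\rangle)=\bigcap_{g\in G}C_G(a^g)$. This is an intersection of finitely many subgroups, each conjugate to the open subgroup $C_G(a)$, so $D$ is an open normal subgroup of $G$.

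The crucial observation is that every $w$-value of $G$ lying in $D$ is automatically an FC-element. Indeed, if $y$ is a $w$-value with $y\in D$, then $y$ centralizes $\langle a^G\rangle$, so in particular $[y,a]=1$; hence $C_G(y)$ contains $a$, which has infinite order, so $C_G(y)$ is infinite and, by the hypothesis on centralizers of $w$-values, it is open. Thus $y$ is an FC-element.

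To finish, I would apply Theorem \ref{genN} with $T=D$: by the previous paragraph its hypothesis is satisfied, so the $w^*$-residual $R$ of $D$ has finite commutator subgroup; moreover $R$ is open in $w(G)$ by Lemma \ref{concise}, and $R$ is normal in $G$ (as is clear from the description of the $w^*$-residual of a normal subgroup given in Lemma \ref{ts}). Thus $w(G)$ has an open normal subgroup $R$ with $R'$ finite. Since $R$ is profinite and finite-by-abelian, Lemma \ref{abelian-by-finite} shows that $Z(R)$ has finite index in $R$, hence in $w(G)$; as $Z(R)$ is abelian and normal in $w(G)$, the group $w(G)$ is abelian-by-finite, as required.

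I do not expect a genuine obstacle here: the one idea that makes the argument work is to pass from $a$ to the finite normal subgroup $\langle a^G\rangle$ and then to its centralizer $D$, which is open precisely because $a$ is an FC-element and inside which every $w$-value is forced to commute with the infinite-order element $a$. Everything else is bookkeeping with the marginal-subgroup results of this section together with standard facts about profinite finite-by-abelian groups. The only mild subtlety to watch is that Theorem \ref{genN} refers to $w$-values of $G$ that happen to lie in $D$, rather than to $w$-values of the subgroup $D$ itself, but that is exactly what the second paragraph provides.
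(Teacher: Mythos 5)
Your proof is correct and follows essentially the same route as the paper: the paper likewise passes to an open normal subgroup of $G$ inside $C_G(x)$ (you realize it explicitly as the core $\bigcap_{g}C_G(a^g)$), observes that every $w$-value in it commutes with the infinite-order element and hence is an FC-element, and then applies Theorem \ref{genN} together with Lemma \ref{concise}. The only difference is cosmetic.
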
 
\begin{proof}  
Let $x$ be a $w$-value of $G$ of infinite order. As $C_G(x)$ is open, it contains an open normal subgroup $C$ of $G$. 
Let  $K$ be the $w^*$-residual of $C$ in $G$.  
Since all $w$-values contained in $C$ have infinite centralizers, we apply Theorem \ref{genN}  and conclude that $K'$ is finite. 
Being finite-by-abelian,  $K$  is also abelian-by-finite.
  It follows from Lemma \ref{concise} that $K$ has finite index in  $w(G)$ and so $w(G)$ is abelian-by-finite.
\end{proof} 

\section{Pronilpotent groups with restricted centralizers of $w$-values}\label{sec:pro-p}

In the present section we use the techniques   created by  Zelmanov to deduce a theorem about pronilpotent groups with restricted centralizers of $w$-values 
(see Theorem \ref{pro-p}). A combination of this result with Corollary \ref{infinite} yields a proof of Theorem \ref{main} for pronilpotent groups.


For the reader's convenience we collect some definitions and facts on Lie algebras associated with groups (see \cite{S-Lie} or \cite{Z} for further information).
Let $L$ be a Lie algebra over a field. We use the left normed notation; thus if $l_1,\dots,l_n$ are elements of $L$ then $$[l_1,\dots,l_n]=[\dots[[l_1,l_2],l_3],\dots,l_n].$$
An element $y\in L$ is called ad-nilpotent if $ad\, y$ is nilpotent, i.e. there exists a positive integer $n$ such that $[x,{}_ny]=0$ for all $x\in L$. If $n$ is the least integer with the above property then we say that $y$ is ad-nilpotent of index $n$. Let $X$ be any subset of $L$. By a commutator in elements of $X$ we mean any element of $L$  that could be obtained from elements of $X$ by repeated operation of commutation with an arbitrary system of brackets, including the elements of $X$. 
Here the elements of $X$ are viewed as commutators of weight 1. Denote by $F$ the free Lie algebra over the same field 
as $L$ on countably many free generators $x_1,x_2,\dots$. Let $f=f(x_1,\dots,x_n)$ be a nonzero element of $F$. 
The algebra $L$ is said to satisfy the identity $f\equiv 0$ if $f(a_1,\dots,a_n)=0$ for any $a_1,\dots,a_n\in L$. 
In this case we say that $L$ is $PI$. We are now in position to quote a theorem of Zelmanov \cite{Z,ze3} which
has numerous important applications to group theory. A detailed proof of this result recently  appeared in \cite{ze4}.

\begin{theorem} \label{zzz}
Let $L$ be a Lie algebra generated by finitely many elements $a_1, 
 \dots,a_m$ such that all commutators in $a_1, 
 \dots,a_m$ are ad-nilpotent. If $L$ is $PI$, then it is nilpotent.
\end{theorem}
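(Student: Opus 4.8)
The plan is to prove the statement along Zelmanov's original route, through the structure theory of $PI$ Lie algebras and the Kostrikin--Zelmanov theory of sandwich algebras. First I would make the routine reductions: extending the ground field to an infinite (say algebraically closed) one preserves and reflects nilpotency and leaves all the hypotheses intact, so I may assume the base field infinite, and after linearising the polynomial identity I may assume $L$ satisfies a multilinear identity. Next I would record two observations: the set $S$ of all commutators in $a_1,\dots,a_m$ spans $L$, is closed under the Lie bracket, and --- by hypothesis --- consists of $\operatorname{ad}$-nilpotent elements; and, since $L$ is finitely generated, it suffices to show that $L$ is \emph{locally nilpotent}, because a finitely generated locally nilpotent Lie algebra is automatically nilpotent. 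So the real target becomes: a $PI$ Lie algebra spanned by a bracket-closed set of $\operatorname{ad}$-nilpotent elements is locally nilpotent.

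The first ingredient is the theory of sandwiches. Recall that $c\in L$ is a \emph{sandwich} if $[[x,c],c]=0$ and $[[[x,c],y],c]=0$ for all $x,y\in L$. I would invoke the Kostrikin--Zelmanov theorem that a Lie algebra generated by finitely many sandwiches is nilpotent of bounded class, and that the ideal generated by any set of sandwiches is locally nilpotent; combined with Zelmanov's structure theory for $PI$ Lie algebras this provides $L$ with a locally nilpotent radical $R$, the largest locally nilpotent ideal, with the property that $L/R$ has no nonzero locally nilpotent ideal and in particular no nonzero sandwich. I would then pass to $\bar L=L/R$: it is again $PI$, the images of the elements of $S$ span it, are bracket-closed and $\operatorname{ad}$-nilpotent, and $\bar L$ has no nonzero sandwich. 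It remains to show $\bar L=0$.

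Here is where the deepest part of Zelmanov's work enters: a \emph{nonzero} $PI$ Lie algebra spanned by a bracket-closed set of $\operatorname{ad}$-nilpotent elements must contain a nonzero sandwich. The mechanism combines substituting commutator expressions into the multilinear identity, Vandermonde-type arguments over a polynomial extension $k[t_1,t_2,\dots]$ to isolate homogeneous components of $\operatorname{ad}$-expressions in the generators, and an inductive reduction of the $\operatorname{ad}$-nilpotency index that eventually manufactures an element $c$ which is $\operatorname{ad}$-nilpotent of index at most $2$; linearising the relation $[L,{}_2c]=0$ then shows that such a $c$ is a sandwich when $\operatorname{char} k\neq 2,3$, while the characteristics $2$ and $3$ are handled by routing the argument through special Jordan algebras and the McCrimmon radical. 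Throughout, it is the polynomial identity that keeps the combinatorics under control. Granting this statement, $\bar L$ cannot be nonzero, so $L=R$ is locally nilpotent and, being finitely generated, $L$ is nilpotent.

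The main obstacle is precisely the sandwich-existence statement of the previous paragraph. It is the technical heart of Zelmanov's solution of the restricted Burnside problem and requires the full linearisation machinery, the classification-free structure theory of $PI$ Lie algebras, and --- in small characteristic --- the theory of Jordan algebras; it is the content of \cite{Z,ze3}, with the detailed exposition appearing in \cite{ze4}. The sandwich-algebra nilpotency theorem used in the second paragraph is itself substantial, but I would treat it as established input rather than reprove it.
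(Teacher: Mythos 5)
The paper does not actually prove this statement: Theorem \ref{zzz} is quoted as an external result of Zelmanov \cite{Z,ze3}, with the detailed proof appearing in \cite{ze4}, and it is used in Section \ref{sec:pro-p} purely as a black box. Your outline is a faithful high-level account of how that cited proof goes: the reduction to local nilpotency (legitimate, since a finitely generated locally nilpotent Lie algebra is nilpotent), the observation that the commutators in $a_1,\dots,a_m$ form a bracket-closed spanning set of ad-nilpotent elements, the passage to the quotient by the locally nilpotent radical, the Kostrikin--Zelmanov theorem that sandwich-generated algebras are (locally) nilpotent, and finally the sandwich-existence theorem for nonzero $PI$ Lie algebras spanned by a bracket-closed set of ad-nilpotent elements. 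All of these steps are correctly identified and correctly ordered. The one thing to be candid about is that this is a proof \emph{strategy}, not a proof: the sandwich-existence statement you isolate in your third paragraph is where essentially the entire content of \cite{ze4} lives (linearisation, the Vandermonde/polynomial-extension arguments, the descent on ad-nilpotency index, and the Jordan-algebra detour in characteristics $2$ and $3$), and your second paragraph likewise takes the sandwich-algebra nilpotency theorem as given. Since the authors themselves only cite the theorem, your sketch is, if anything, more informative than the source text; just be aware that it does not constitute an independent verification, and that reproducing the missing ingredients would be a project on the scale of \cite{ze4} itself.
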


Let $G$ be a group.
Recall that the lower central word $[x_1,\ldots,x_k]$ is usually denoted by $\gamma_{k}$. The corresponding verbal subgroup $\gamma_k(G)$ is the familiar $k$th term of the lower central series of the group $G$.  
 Given a prime $p$, a Lie algebra can be associated with the group $G$ as follows. We denote by 
$$D_i=D_i(G)= \prod_{jp^k \ge i} \left(\gamma_j(G)\right)^{p^k}$$ 
the $i$th dimension subgroup of $G$ in characteristic $p$ (see for example \cite[Chap. 8]{hb}). 
These subgroups form a central series of $G$ known as the Zassenhaus-Jennings-Lazard series. Set $L(G)=\bigoplus D_i/D_{i+1}$. 
Then $L(G)$ can naturally be viewed as a Lie algebra over the field $\mathbb F_p$ with $p$ elements. For an element 
$x\in D_i\setminus D_{i+1}$ we denote by $\tilde x$ the element $xD_{i+1}\in L(G)$.

\begin{lemma}[Lazard, \cite{la}]\label{Laz}
For any $x\in G$ we have $(ad\,\tilde{x})^p=ad\,(\tilde{x^p})$.
\end{lemma}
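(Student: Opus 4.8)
The plan is to deduce the identity from the fact that $L(G)$ embeds into an associative ring, namely the graded ring of the group algebra $\mathbb{F}_pG$ filtered by the powers of its augmentation ideal $I$, together with the elementary observation that $(ad\,a)^p=ad\,(a^p)$ holds in every associative $\mathbb{F}_p$-algebra. First I would invoke Jennings' theorem, which asserts that the $i$-th dimension subgroup in characteristic $p$, defined as in the text, satisfies $D_i(G)=G\cap(1+I^i)$. This is exactly what makes the assignment $xD_{i+1}\mapsto (x-1)+I^{i+1}$, for $x\in D_i\setminus D_{i+1}$, a well defined and injective map of the graded Lie algebra $L(G)=\bigoplus_i D_i/D_{i+1}$ into the associated graded ring $R:=\bigoplus_i I^i/I^{i+1}$; and the standard congruence $[x,y]-1\equiv(x-1)(y-1)-(y-1)(x-1)$ modulo one further power of $I$ shows that this map sends the Lie bracket of $L(G)$ to the associative commutator of $R$. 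Thus $L(G)$ is a Lie subalgebra of $R$, and for $a\in L(G)$ the operator $ad\,a$ on $L(G)$ is the restriction of the associative $ad\,a$ on $R$.

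Next I would compute $p$-th powers. Since $p\mid\binom{p}{k}$ for $1\le k\le p-1$, one has the exact identity $(x-1)^p=x^p-1$ in $\mathbb{F}_pG$ for every $x\in G$. Consequently, if $x\in D_i\setminus D_{i+1}$ then $x^p-1=(x-1)^p\in I^{ip}$, so $x^p\in G\cap(1+I^{ip})=D_{ip}$, and upon passing to $R$ the image of $x^p$ is precisely the $p$-th associative power of the image of $x$; in the notation of the lemma this reads $\tilde{x^p}=(\tilde x)^p$ inside $R$, where the right-hand side is the associative $p$-th power. In particular $(\tilde x)^p$ lies in the subalgebra $L(G)$.

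It then only remains to apply the associative identity. Writing $\lambda_a$ and $\rho_a$ for the operators of left and right multiplication by $a$ in $R$, these two operators commute, $ad\,a=\lambda_a-\rho_a$, and in characteristic $p$ we get $(\lambda_a-\rho_a)^p=\lambda_a^{\,p}-\rho_a^{\,p}=\lambda_{a^p}-\rho_{a^p}=ad\,(a^p)$. Taking $a=\tilde x$, restricting this operator identity to the Lie subalgebra $L(G)$, and using $\tilde{x^p}=(\tilde x)^p\in L(G)$ from the previous step, we obtain $(ad\,\tilde x)^p=ad\,(\tilde{x^p})$ on $L(G)$, which is the assertion. The one genuinely substantial input here is Jennings' identification $D_i(G)=G\cap(1+I^i)$: it is what guarantees the embedding $L(G)\hookrightarrow R$ and thereby allows the associative identity to be transported to $L(G)$; the rest is routine verification. (Alternatively, a proof staying inside $G$ can be based on the Hall--Petrescu collection formula and the congruences for $[y,x^p]$ modulo higher dimension subgroups, at the cost of noticeably more computation.)
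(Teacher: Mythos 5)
Your argument is correct and is the standard proof of Lazard's lemma, via Jennings' identification $D_i(G)=G\cap(1+I^i)$, the resulting embedding of $L(G)$ into the graded ring $R=\bigoplus_i I^i/I^{i+1}$ of $\mathbb{F}_pG$, the identity $(x-1)^p=x^p-1$, and the commuting-operators computation $(\lambda_a-\rho_a)^p=\lambda_{a^p}-\rho_{a^p}$; the paper gives no proof of its own and simply cites Lazard, so there is nothing to compare beyond noting that yours is the canonical argument (as in Huppert--Blackburn or the survey \cite{S-Lie} cited in the text). The only point worth flagging is a convention in the statement rather than in your proof: when $x\in D_i\setminus D_{i+1}$, the symbol $\tilde{x^p}$ must be read as the image of $x^p$ in the component $D_{ip}/D_{ip+1}$ (which may be zero, e.g.\ when $x^p=1$ or $x^p\in D_{ip+1}$), and your step ``$\tilde{x^p}=(\tilde x)^p$ inside $R$'' implicitly and correctly adopts this reading, whereas the paper's literal definition of the tilde as the nonzero homogeneous image would make the identity ill-posed in that degenerate case.
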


The next proposition follows from
the proof of the main theorem in the paper of Wilson and Zelmanov \cite{WZ}.

\begin{proposition}\label{prop:WZ}
 Let $G$ be a group satisfying a coset identity. Then $L(G)$ is $PI$.
\end{proposition}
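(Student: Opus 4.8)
The plan is to reproduce, in outline, the linearisation argument of Wilson and Zelmanov. Begin with harmless reductions. Since $L(G)$ and the property of satisfying a coset identity are both unchanged on passing to $G/\bigcap_iD_i(G)$, and since if this quotient is finite then $L(G)$ is finite-dimensional and there is nothing to prove, one may assume $G$ is residually-$p$ with $\bigcap_iD_i(G)=1$. A coset identity is a relation $w(g_1h_1,\dots,g_nh_n)=1$, valid for all $h_1,\dots,h_n$ in some subgroup $H$ of finite index, where $w=w(y_1,\dots,y_n)$ is a nontrivial reduced group word; using the filtration by the dimension subgroups one reduces to the case where the relation holds for all $h_i$ in a dimension subgroup $H=D_N(G)$ (this is one of the reductions carried out in \cite{WZ}).

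Next I would rewrite the coset identity as a twisted word identity on $H$. Setting all $h_i=1$ gives $w(g_1,\dots,g_n)=1$, so collecting terms turns the coset identity into an equation of the shape
$$\prod_{j=1}^{r}\bigl(h_{i_j}^{\varepsilon_j}\bigr)^{u_j}=1,\qquad\text{valid for all }h_1,\dots,h_n\in H,$$
where $r$, the indices $i_j\in\{1,\dots,n\}$, the signs $\varepsilon_j=\pm1$ and the elements $u_j\in G$ (subwords of $w$ evaluated at the $g_k$) depend only on $w$ and the $g_k$, and normality of $H$ ensures that every factor lies in $H$. The heart of the matter is to pass from this twisted identity to a Lie identity. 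For $h_i$ ranging over $D_m(G)$ with $m\ge N$, write $\tilde h_i=h_iD_{m+1}(G)\in L(G)$; conjugation by a fixed $u\in G$ induces a restricted Lie automorphism $\varphi_u$ of $L(G)$, while Lemma \ref{Laz} records the $p$-operation. Substituting for the $h_i$ group commutators of growing weight, expanding inside the restricted enveloping algebra of $L(G)$, and extracting the lowest-weight homogeneous component of the resulting relation produces a restricted Lie polynomial in the $\tilde h_i$ that vanishes identically on $L(G)$; multilinearising gives a multilinear relation $f\equiv 0$.

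The step I expect to be the genuine obstacle is showing that $f$ may be taken nonzero, and in fact an ordinary Lie polynomial identity, so that Theorem \ref{zzz} becomes applicable. This rests on the classical dictionary between the Zassenhaus--Jennings--Lazard filtration of the free group on $y_1,\dots,y_n$ and the free restricted Lie algebra over $\mathbb{F}_p$: because $w$ is a nontrivial word and the dimension subgroups of a free group intersect trivially, $w$ has a nonzero initial form in that free restricted Lie algebra, and the relation obtained above is governed by this initial form, with the automorphisms $\varphi_{u_j}$ entering as coefficients. Absorbing those coefficients --- by pushing the $h_i$ deep enough into the filtration that the twists act trivially on the top component, together with a further specialisation to remove any genuine $p$th-power contributions --- yields a nonzero ordinary multilinear Lie identity for $L(G)$. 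Executing this bookkeeping precisely is exactly what is done in the proof of the main theorem of \cite{WZ}, which we cite.
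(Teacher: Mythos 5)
Your proposal follows the same route as the paper, which gives no argument of its own and simply notes that the statement follows from the proof of the main theorem of Wilson and Zelmanov \cite{WZ}; your outline of the reduction to a dimension subgroup, the rewriting as a twisted identity, the passage to the graded Lie algebra via lowest-weight components, and the multilinearisation is a faithful sketch of that proof, and you correctly identify the nonvanishing of the resulting identity as the delicate point before deferring the bookkeeping to \cite{WZ}. No discrepancy with the paper's treatment.
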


Let $L_p(G)$ be the subalgebra of $L(G)$ generated by $D_1/D_2$. Often, important information about the group $G$ can be deduced from nilpotency of the Lie algebra $L_p(G)$.

 \begin{proposition}\cite[Corollary 2.14]{S-Lie}\label{prop:2.14}
Let $G$ be 
 a group generated by elements $a_1, a_2, \dots , a_m$ such that every $\gamma_k$-value in $a_1, a_2, \dots , a_m$ has finite order, for every $k$.  Assume
that $L_p(G)$ is nilpotent. Then the
series $\{D_i\}$ becomes stationary after finitely many steps.
\end{proposition}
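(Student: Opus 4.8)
The plan is to combine Zelmanov's theorem (Theorem \ref{zzz}) applied to the Lie algebra $L_p(G)$ with a descending-chain argument on the series $\{D_i\}$. First I would observe that nilpotency of $L_p(G)$, say $L_p(G)$ is nilpotent of class $c$, tells us that every $(c{+}1)$-fold Lie commutator of elements of $D_1/D_2$ vanishes in $L(G)$; translating back to the group via the definition $L(G)=\bigoplus D_i/D_{i+1}$, this means that the group-theoretic commutator $\gamma_{c+1}(G)$ lies deep in the series, i.e. $\gamma_{c+1}(G)\le D_{c+2}$ — more precisely the images $\widetilde{a_j}$ generate a nilpotent subalgebra, so for a suitable large $N$ one gets $\gamma_{N}(G)\le D_{N+1}$, which is the engine of the argument. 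The hypothesis that every $\gamma_k$-value in the generators $a_1,\dots,a_m$ has finite order then bounds, for each fixed $k$, the exponent of $\gamma_k(G)/\gamma_{k+1}(G)$ (this is a finitely generated abelian group generated by the finitely many basic commutators of weight $k$ in the $a_i$, each of finite order), so $\gamma_k(G)/\gamma_{k+1}(G)$ is finite of $p'$-part under control; combined with the standard fact that $D_i$ is built from the $\gamma_j$ together with $p$-th powers, this forces the factors $D_i/D_{i+1}$ to be trivial from some point on.

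More concretely, the key steps in order: (1) from Theorem \ref{zzz} and the hypothesis that $L_p(G)$ is nilpotent (the commutators in the $\widetilde a_i$ are automatically ad-nilpotent by Lemma \ref{Laz} together with the finite-order hypothesis on the $\gamma_k$-values, and $L_p(G)$ being nilpotent is given), extract an integer $c$ with $L_p(G)$ nilpotent of class $c$; (2) deduce $\gamma_{c+1}(G)\le D_{c+2}$, and then iterate: since $D_{c+2}$ is generated by higher $\gamma_j$'s and $p$-th powers thereof, and $\gamma_{c+1}(G)\le D_{c+2}$, one gets $\gamma_{c+1}(G)= \gamma_{c+1}(G)^p \cdot(\text{deeper terms})$, so $\gamma_{c+1}(G)/\gamma_{c+2}(G)$ is a quotient of $\gamma_{c+1}(G)^p$; (3) because $\gamma_{c+1}(G)/\gamma_{c+2}(G)$ is finitely generated abelian of finite exponent (finitely many weight-$(c{+}1)$ basic commutators in $a_1,\dots,a_m$, each of finite order), iterating the $p$-th power collapse finitely many times kills it, giving $\gamma_{c+1}(G)=\gamma_{c+2}(G)=\cdots$; (4) conclude that the lower central series stabilizes, hence so does the faster-descending series $\{D_i\}$ — indeed for $i$ large enough every factor $\gamma_j(G)/\gamma_{j+1}(G)$ contributing to $D_i/D_{i+1}$ is trivial, so $D_i=D_{i+1}$ for all large $i$.

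The step I expect to be the main obstacle is (2)–(3): making precise the passage from ``$L_p(G)$ nilpotent of class $c$'' to a genuine stabilization of the group series, rather than merely $\gamma_{c+1}(G)\le D_{c+2}$. The subtlety is that a priori $\gamma_{c+1}(G)$ could still be nontrivial and only sink slowly through the $D_i$; one must use the finite-order hypothesis on $\gamma_k$-values in an essential and quantitative way to see that each $\gamma_k(G)/\gamma_{k+1}(G)$ is finite (not just finitely generated), so that the chain $\gamma_{c+1}(G)\ge\gamma_{c+1}(G)^p\ge\gamma_{c+1}(G)^{p^2}\ge\cdots$ of characteristic subgroups, each obtained by re-expressing the previous one modulo deeper terms, actually terminates. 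Once finiteness of those factors is in hand the rest is bookkeeping with the definition $D_i=\prod_{jp^k\ge i}\gamma_j(G)^{p^k}$.
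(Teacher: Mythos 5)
The paper does not prove this statement itself (it is quoted from \cite[Corollary 2.14]{S-Lie}), but your argument has a genuine gap at its central step, and the route you choose --- forcing the lower central series of $G$ to stabilize --- is not the one that works. The breakdown is in steps (2)--(3): from $\gamma_{c+1}(G)\le D_{c+2}$ you cannot conclude $\gamma_{c+1}(G)=\gamma_{c+1}(G)^p\cdot(\text{deeper terms})$, nor that $\gamma_{c+1}(G)/\gamma_{c+2}(G)$ is a quotient of $\gamma_{c+1}(G)^p$. By definition $D_{c+2}=\prod_{jp^k\ge c+2}\gamma_j(G)^{p^k}$ contains, for instance, $G^{p^k}$ for every $p^k\ge c+2$; knowing that $\gamma_{c+1}(G)$ sits inside this product gives no factorization of $\gamma_{c+1}(G)$ itself in terms of its own $p$-th powers and $\gamma_{c+2}(G)$. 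So the collapse $\gamma_{c+1}(G)=\gamma_{c+2}(G)$ is never established, and step (4) has nothing to run on. Step (4) is also independently flawed as written: even if the lower central series did stabilize, $D_i/D_{i+1}$ is not assembled only from the factors $\gamma_j(G)/\gamma_{j+1}(G)$ with $j\ge i$; it also receives contributions from $p^k$-th powers of commutators of small weight $j$ with $jp^k\ge i$, so triviality of the high lower central factors alone does not give $D_i=D_{i+1}$. (That last point could be repaired --- finiteness of each $\gamma_j/\gamma_{j+1}$ makes $G/\gamma_{c+1}(G)$ finite, and the dimension series of a finite group stabilizes --- but only after the stabilization of $\{\gamma_k(G)\}$ has been proved, which is exactly the step that fails.)

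The missing idea is to work entirely inside $L(G)$ rather than with the group series. The algebra $L(G)=\bigoplus D_i/D_{i+1}$ is generated as a \emph{restricted} Lie algebra by its first component $D_1/D_2$ (Lazard--Quillen), and by Lemma \ref{Laz} the $p$-operation is induced by the group $p$-th power map; since brackets with $p$-th powers reduce to iterated ordinary brackets, every homogeneous component $D_i/D_{i+1}$ is spanned by elements $\widetilde{g^{p^k}}$, where $g$ is a commutator of weight $j$ in $a_1,\dots,a_m$ and $jp^k\ge i$. Nilpotency of $L_p(G)$ of class $c$ kills all such $g$ of weight $j>c$. There are only finitely many commutators of weight $j\le c$ in the generators, each of finite order by hypothesis, and an element $g$ of order $p^tu$ with $(u,p)=1$ satisfies $g^{p^t}\in\bigcap_i D_i$ (its image in each elementary abelian factor $D_i/D_{i+1}$ has order dividing both $p$ and $u$). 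Hence all spanning elements vanish in degrees beyond some fixed $N$, so $D_i=D_{i+1}$ for $i>N$. This is the argument of the cited source; your step (1) and the observation $\gamma_{c+1}(G)\le D_{c+2}$ are correct but do not substitute for it. (Incidentally, Theorem \ref{zzz} is not needed here at all, since nilpotency of $L_p(G)$ is a hypothesis of the proposition.)
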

%

Let $P$ be a Sylow subgroup of a finite group $G$. 
An immediate corollary of the Focal Subgroup Theorem \cite[Theorem 7.3.4]{Gor} is that  $G'\cap P$ is generated by commutators.
A weaker version of this fact for multilinear commutator words was proved in  \cite[Theorem A]{AFS}.
\begin{proposition}\label{focal}
  Let $G$  be a finite group and $P$ a Sylow subgroup of $G$. If $w$ is
a multilinear commutator word, then $w(G) \cap P$ is generated by powers of $w$-values.
  \end{proposition}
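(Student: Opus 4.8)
The plan is to reduce Proposition \ref{focal} to the classical Focal Subgroup Theorem by working prime by prime and by exploiting the special structure of multilinear commutator words.

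\medskip

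First I would fix a prime $p$ and let $P$ be a Sylow $p$-subgroup of the finite group $G$. Since $w$ is a multilinear commutator word, Lemma \ref{lem:delta_k} tells us that every $\delta_n$-value is a $w$-value (where $n$ is the number of variables of $w$), and more to the point the verbal subgroup $w(G)$ is contained in the derived subgroup and is itself a verbal subgroup; so the set of $w$-values is closed under conjugation and inversion. The key classical input is the Focal Subgroup Theorem \cite[Theorem 7.3.4]{Gor}, which says that for any subgroup $N\trianglelefteq G$ one has that $N\cap P$ is generated by elements of the form $x^{-1}x^g$ with $x\in P$, $g\in G$, $x^{-1}x^g\in N$ — i.e.\ by ``focal'' elements. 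Applying this with $N=w(G)$ gives that $w(G)\cap P$ is generated by elements $x^{-1}x^g$ where $x,x^g\in P$ and $x^{-1}x^g\in w(G)$.

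\medskip

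The main step is then to show that each such focal element $x^{-1}x^g$ lying in $w(G)\cap P$ can be rewritten as a product of powers of $w$-values. Here is where I expect the real work: $x^{-1}x^g=[x,g]$ need not itself be a $w$-value, but it lies in the $p$-group $w(G)\cap P$, and $x$ and $x^g$ are $p$-elements. The idea is to pass to a suitable quotient or to use a transfer/coprime argument: since $x^{-1}x^g$ has $p$-power order and lies in $w(G)$, and $w(G)$ is generated by $w$-values, one writes $x^{-1}x^g$ as a word in $w$-values and then projects onto the $p$-part. Concretely, if $z=z_1\cdots z_k$ with each $z_i$ a $w$-value, then since $z$ has $p$-power order we can replace the $z_i$ by their $p$-parts: the $p$-part of a $w$-value is again a power of that $w$-value (as $z_i^{p'}$, the $p'$-part, is the appropriate power of $z_i$, so the $p$-part is $z_i$ raised to a suitable integer). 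Thus modulo the normal subgroup generated by the $p'$-parts one controls the $p$-part of $z$. The cleanest way to organize this is to invoke the fact (provable by a Hall--Higman type or a coprime-action argument) that in any finite group the set of $w$-values is ``closed enough'' that the Sylow intersection of the verbal subgroup is generated by $p$-elements each of which is a power of a single $w$-value. This is essentially the content of \cite[Theorem A]{AFS}, which I would cite for the statement that $w(G)\cap P$ is generated by powers of $w$-values, and the argument above is the sketch of why that holds.

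\medskip

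The hard part will be the last step: turning a product of $w$-values of mixed order into a product of $p$-power-order powers of $w$-values while staying inside $P$. The subtlety is that rewriting $z=z_1\cdots z_k$ and taking $p$-parts does not commute with multiplication in a nonabelian group, so one has to argue inductively on $k$, at each stage using that the relevant commutators $[z_i,z_{i+1}\cdots z_k]$ are again in $w(G)$ (since $w(G)$ is normal) and have order dividing $|w(G)|$, and then splitting off the $p'$-part. An alternative and perhaps more robust route is to use the transfer homomorphism $G\to P/[P,P]\cdot (\text{something})$ and the fact that the focal subgroup equals the kernel of transfer restricted to $P$, combined with the observation that $w$-values map onto a generating set of the image; but I expect the inductive ``take $p$-parts of a product'' argument, carried out carefully, to be the decisive technical point, and it is exactly the kind of computation \cite{AFS} performs.
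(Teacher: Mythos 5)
The paper gives no proof of Proposition \ref{focal} at all: it is stated as a quotation of \cite[Theorem A]{AFS}, so your closing move of simply citing that theorem is exactly what the authors do, and as a citation your proposal lands in the right place. The problem is the argument you wrap around the citation and present as ``the sketch of why that holds''; it is not a correct outline of the proof and has two genuine gaps. First, the Focal Subgroup Theorem \cite[Theorem 7.3.4]{Gor} concerns $P\cap G'$ only. The relative version you invoke --- that for a normal subgroup $N$ of $G$ the intersection $P\cap N$ is generated by those focal elements $x^{-1}x^{g}$ that happen to lie in $N$ --- is not the classical statement, is nowhere justified, and is false for a general normal subgroup (take $N$ a central direct factor of order $p$: every focal element lying in $N$ is trivial, while $P\cap N=N\neq 1$). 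Restricting to $N=w(G)\le G'$ does not rescue it without an argument; and even if it did hold, it would express $P\cap w(G)$ in terms of ordinary commutators $[x,g]$, which for a general multilinear commutator word $w$ are no closer to being powers of $w$-values than the elements you started with.

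Second, the reduction of a focal element to a product of powers of $w$-values is the entire content of the theorem, and the ``take $p$-parts term by term'' device does not supply it. As you concede, $p$-parts do not distribute over a product in a nonabelian group, and the proposed inductive repair is circular: the correction terms it produces are again merely elements of $w(G)$ of $p$-power order, i.e.\ exactly the objects you are trying to decompose. The actual proof in \cite{AFS} does not pass through a relative focal subgroup theorem for $w(G)$; it is an induction on the structure of the outer commutator word $w=[\alpha,\beta]$, working with Sylow subgroups of the verbal subgroups $\alpha(G)$ and $\beta(G)$ and invoking the classical focal/transfer argument only at the bottom of the induction. In short, citing \cite[Theorem A]{AFS} is fine (and is what the paper does), but the heuristic you attach to it should be discarded rather than offered as evidence.
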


\begin{proposition}\label{prop:abstract}
Let $p$ be a prime,  $w$  a multilinear commutator word and  $G$  a profinite group in which all $w$-values have finite $p$-power order. 
Let $K$ be the abstract subgroup of $G$ generated by all  $w$-values. Then $K$ is a locally finite $p$-group.
\end{proposition}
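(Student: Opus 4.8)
The plan is to combine the Zelmanov Lie‑theoretic machinery with a reduction to the case of the derived words $\delta_k$. I begin with a finite‑group observation: if $\bar G$ is a finite group in which all $w$-values have $p$-power order, then $w(\bar G)$ is a $p$-group. Indeed, if a prime $q\neq p$ divided $|w(\bar G)|$ and $P$ were a Sylow $q$-subgroup of $\bar G$, then $w(\bar G)\cap P$ is a Sylow $q$-subgroup of $w(\bar G)$, and by Proposition \ref{focal} it is generated by powers of $w$-values; these are $p$-elements, hence trivial inside the $q$-group $P$, so $w(\bar G)\cap P=1$, a contradiction. Now for every open normal subgroup $N$ of $G$ the finite group $G/N$ satisfies this hypothesis, so $w(G/N)=w(G)N/N$ is a $p$-group; as these are the finite continuous quotients of the closed subgroup $w(G)$, it follows that $w(G)$ is pro-$p$. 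In particular $K\leq w(G)$ is residually a finite $p$-group and every element of $K$ of finite order is a $p$-element; hence it suffices to prove that $K$ is locally finite, the pro-$p$ property then forcing it to be a locally finite $p$-group.

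Next I reduce to the case $w=\delta_k$. Suppose $w$ involves $n$ variables. By Lemma \ref{lem:delta_k} every $\delta_n$-value is a $w$-value, so the abstract subgroup generated by the $\delta_n$-values, which is $G^{(n)}$, is contained in $K$; moreover the image of $K$ in $G/G^{(n)}$ is the abstract verbal subgroup $w(G/G^{(n)})$, which is locally finite by Lemma \ref{lem:4.2} because $G/G^{(n)}$ is soluble and all its $w$-values have finite order. Since locally finite groups form an extension-closed class, it is enough to show that $G^{(n)}$, i.e. the abstract subgroup generated by all $\delta_n$-values, is locally finite; that is, it is enough to prove the Proposition for the word $\delta_k$.

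So assume $w=\delta_k$ and let $H=\langle x_1,\dots,x_d\rangle$ be a subgroup generated by finitely many $\delta_k$-values; it suffices to prove $H$ finite. The combinatorial key is that every group commutator $[x_{i_1},\dots,x_{i_m}]$ in $\delta_k$-values is again a $\delta_k$-value: by the recursion defining $\delta_k$ it is a $\delta_{k+1}$-value, and a $\delta_j$-value is always a $\delta_{j-1}$-value (ultimately because a $\delta_1$-value is simply an element of the group). In particular every $\gamma_m$-value in $x_1,\dots,x_d$ is a $\delta_k$-value of $G$, hence has finite $p$-power order. Let $\overline H$ be the closure of $H$ in $G$; being a closed subgroup of $w(G)$ it is a finitely generated pro-$p$ group, and it is enough to show $\overline H$ is finite. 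Applying the Baire category argument from the proof of Lemma \ref{basic-light} to the continuous map $(g_1,\dots,g_{2^k})\mapsto\delta_k(g_1,\dots,g_{2^k})$ on $\overline H^{2^k}$, and using that each $\delta_k$-value has finite $p$-power order, one obtains an open normal subgroup $M$ of $\overline H$, elements $h_1,\dots,h_{2^k}\in\overline H$ and an integer $j$ with $\delta_k(h_1M,\dots,h_{2^k}M)^{p^j}=1$; thus $\overline H$ satisfies a coset identity and Proposition \ref{prop:WZ} shows that $L(\overline H)$, hence $L_p(\overline H)$, is $PI$. Each Lie commutator in the generators $\tilde x_i$ of $L_p(\overline H)$ is either zero or of the form $\widetilde y$ with $y$ a group commutator in the $x_i$, hence of finite $p$-power order; so by Lemma \ref{Laz} all these commutators are ad-nilpotent. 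Theorem \ref{zzz} then yields that $L_p(\overline H)$ is nilpotent, and Proposition \ref{prop:2.14}, whose hypothesis on $\gamma_m$-values we have just checked, shows that the series $\{D_i(\overline H)\}$ becomes stationary. Since $\overline H$ is pro-$p$ we have $\bigcap_i D_i(\overline H)=1$, whence $D_N(\overline H)=1$ for some $N$; as $\overline H$ is finitely generated pro-$p$ the quotients $D_i(\overline H)/D_{i+1}(\overline H)$ are finite, so $\overline H$, and therefore $H$, is finite. Together with the reduction and the first paragraph, this proves that $K$ is a locally finite $p$-group.

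I expect the decisive difficulty to be establishing that $L_p(\overline H)$ is $PI$: one must manufacture a genuine coset identity for the pro-$p$ group $\overline H$ from the sole hypothesis that every $\delta_k$-value has finite, but a priori unbounded, $p$-power order, which is exactly where compactness (the Baire category theorem) is indispensable. The companion point — arranging that every group commutator in the chosen generators is itself a $\delta_k$-value, so that Lazard's lemma delivers ad-nilpotency and Zelmanov's theorem applies — is what makes the preliminary reduction from a general multilinear commutator word to the derived words unavoidable.
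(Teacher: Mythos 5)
Your proposal is correct and follows essentially the same route as the paper: the focal-subgroup argument to see that $w(G)$ is pro-$p$, the reduction to a derived word $\delta_k$ via Lemmas \ref{lem:delta_k} and \ref{lem:4.2}, the Baire category argument producing a coset identity, and then Lazard's lemma, the Wilson--Zelmanov proposition, Theorem \ref{zzz} and Proposition \ref{prop:2.14} to conclude that the closed subgroup generated by finitely many $\delta_k$-values is finite. The only cosmetic difference is that you pass from the abstract finitely generated subgroup $H$ to its closure $\overline H$, whereas the paper works with the topologically generated subgroup from the outset.
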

\begin{proof} 
It follows from Proposition \ref{focal} that $w(G)$ is a pro-$p$ group. 
Indeed if $Q$ is a Sylow $q$-subgroup of $w(G)$, then 
 the image of $Q$ in any finite continuous image of $G$ is generated 
 by powers of $w$-values, 
which are $p$-elements, hence $Q=1$ unless $q=p$.  

By Lemma \ref{lem:delta_k} there exists an integer $k$  such that each $\delta_k$-value is a $w$-value. 
 It is sufficient to prove that the abstract subgroup $R$ generated by all $\delta_k$-values is locally finite. 
 Indeed, the abstract group $G/R$ is a soluble group such that all $w$-values have finite order. Hence  $w(G/R)$ is locally 
 finite by Lemma \ref{lem:4.2}. 

Let $X$ be the set of $\delta_k$-values of $G$.
 Every finitely generated subgroup of $R$ is contained in a subgroup generated by a finite subset of $X$. 
 So we choose finitely many elements $a_1, \dots, a_s$ in $X$ and consider the subgroup $H$ topologically generated by $a_1, \dots, a_s$. It is sufficient to prove that $H$ is finite. 

Note that $H$ is a pro-$p$ group, since it is a subgroup of $w(G)$. 
For every positive integer $t$, consider the set
$$S_t=\{ (h_1, \dots, h_{2^k}) \mid h_i\in H \ {\textrm{and}}\  \delta_k(h_1, \dots, h_{2^k})^{p^t}=1 \}.$$ 
These sets are closed and their union is the whole Cartesian product of $2^k$ copies of $H$. 
By the Baire category theorem 
 at least one of the sets $S_t$ has nonempty interior. 
Hence,  there exist a natural number $m$, some elements
$y_i\in H$ and a normal open subgroup
$Z$ of $H$ such that 
$$\delta_k(y_1 Z,\dots,y_{2^k}Z)^{p^m}=1.$$ 
In particular $H$ satisfies a coset identity. 

Let $L=L_p(H)$ be the Lie algebra associated with the Zassenhaus-Jennings-Lazard series $\{ D_i \}$. 
 Then $L$ is generated by $\tilde{a}_i=a_i D_2$ for $i=1, \dots , s$. Let $b$ 
any Lie-commutator in $\tilde{a}_1, \dots, \tilde{a}_s$ and let $c$ be the group-commutator in $a_1, \dots, a_s$
having the same system of brackets as $b$. Since $X$ is commutator closed, $c$ is a $\delta_k$-value and so it has finite order.
By Lemma \ref{Laz} this implies that $b$ is ad-nilpotent. 
As $H$  satisfies a coset identity,  it follows from  Proposition \ref{prop:WZ} that $L$ satisfies some nontrivial
polynomial identity. 
By Theorem \ref{zzz} we conclude that $L$ is nilpotent. 
As every $\gamma_k$-value in $a_1, \dots, a_s$  has finite order, 
  Proposition \ref{prop:2.14}  shows that the series $\{D_i\} $ has only finitely many nontrivial terms. 
 Since $H$ is a pro-$p$ group, it follows that the intersection of all $D_i$'s  is trivial. 
 Taking into account that each $D_i$ has finite index in $H$, we deduce that $H$ is finite. This proves that $R$ is locally finite and the proposition follows. 
\end{proof}

\begin{theorem}\label{pro-p}
Let $w$ be a multilinear commutator word and let $G$ be a pronilpotent group with restricted centralizers of $w$-values  in which every $w$-value
has finite order. Then the derived subgroup of $w(G)$ is  finite. 
\end{theorem}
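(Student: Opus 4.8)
The plan is to reduce to the case where $G$ is a pro-$p$ group for a single prime $p$, and then to argue by a dichotomy according to whether or not all $w$-values are FC-elements.

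Since $G$ is pronilpotent, I would begin by writing $G=\prod_p G_p$ as the Cartesian product of its Sylow subgroups $G_p$, and observing that $w(G_p)=1$ for all but finitely many $p$. Indeed, if $w(G_p)\neq 1$ for every $p$ in an infinite set $S$ of primes, choose for each $p\in S$ a nontrivial $w$-value $w(g_1^{(p)},\dots,g_n^{(p)})$ in $G_p$, and assemble the components into elements $g_i=(g_i^{(p)})_p$ of $G$ (with $g_i^{(p)}=1$ for $p\notin S$); evaluating $w$ coordinatewise, $w(g_1,\dots,g_n)$ has nontrivial $p$-component for every $p\in S$, hence order divisible by infinitely many primes, contradicting the hypothesis that every $w$-value has finite order. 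Therefore $w(G)=\prod_{p\in\pi_0}w(G_p)$ for some finite set $\pi_0$ of primes, and consequently $w(G)'=\prod_{p\in\pi_0}w(G_p)'$. Each $G_p$ inherits the hypotheses of the theorem: a $w$-value of $G_p$ is a $w$-value of $G$, so it has finite order, and its centralizer in $G_p$ is the intersection with $G_p$ of its (finite or open) centralizer in $G$, hence finite or open in $G_p$. So it is enough to prove the theorem when $G$ is pro-$p$.

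Assume now that $G$ is pro-$p$. Every $w$-value has finite order, hence finite $p$-power order, so by Proposition~\ref{prop:abstract} the abstract subgroup $K=\langle G_w\rangle$ generated by all $w$-values is a locally finite $p$-group; by definition $w(G)=\overline K$. If every $w$-value of $G$ is an FC-element, then Corollary~\ref{profinite-FC} immediately gives that $w(G)'$ is finite. Otherwise, some $w$-value $x$ is not an FC-element; since the restrictedness hypothesis permits only finite or open (that is, finite-index) centralizers, $C_G(x)$ must then be finite. Now $K$ is locally nilpotent (being a locally finite $p$-group) and residually finite (being a subgroup of the profinite group $G$), and it contains the element $x$ with $C_K(x)\le C_G(x)$ finite; hence Lemma~\ref{114} forces $K$ to be finite. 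Then $w(G)=\overline K=K$ is finite, so in particular $w(G)'$ is finite. This settles the pro-$p$ case and, with the reduction above, the theorem.

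I expect the substantive input to be Proposition~\ref{prop:abstract} (which already rests on Zelmanov's theorem), after which the argument is essentially bookkeeping: a two-line dichotomy feeding Corollary~\ref{profinite-FC} in one branch and Lemma~\ref{114} in the other. The only point I would treat with some care is the reduction from pronilpotent to pro-$p$ — namely, that $w$ takes nontrivial values on only finitely many of the Sylow factors, that $w$ is evaluated coordinatewise in the Cartesian product, that a tuple with infinitely many nontrivial Sylow components has infinite order, and that the verbal subgroup (and its derived subgroup) of the product decomposes as the product over the finitely many relevant primes.
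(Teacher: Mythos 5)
Your proposal is correct and follows essentially the same route as the paper: reduce to the pro-$p$ case via the Sylow decomposition (with the same ``infinitely many nontrivial components gives a $w$-value of infinite order'' argument), then apply Proposition~\ref{prop:abstract} and split according to whether some $w$-value has finite centralizer (Lemma~\ref{114}) or all $w$-values are FC-elements (Corollary~\ref{profinite-FC}). The only difference is cosmetic: the paper treats the pro-$p$ case first and the Sylow reduction last.
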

\begin{proof} 
First assume that $G$ is a pro-$p$ group.
Let $K$ be the abstract subgroup of $G$ generated by all $w$-values. By Proposition \ref{prop:abstract} $K$ is a locally finite $p$-group. 
 If a $w$-value of $G$ has finite centralizer, then  $K$ is finite by Lemma \ref{114}. 
Since $K$ is dense in $w(G)$, we conclude that $w(G)$ is finite. Therefore we can assume that 
 every $w$-value in $G$ is an FC-element 
and so the result follows from Corollary \ref{profinite-FC}. 

When  $G$ is pronilpotent, it is the Cartesian product of its Sylow subgroups.
  Let $\mathcal P$ be the set of primes $p$ such that   $w(P) \neq 1$ where $P$ is the Sylow $p$-subgroup  of $G$.  
  If $\mathcal P$ is infinite, then $G$ has a $w$-value of infinite order, against our assumption. Thus $\mathcal P$ is finite. 
  If $P$ is a   Sylow $p$-subgroup of $G$, then the derived subgroup of $w(P)$ is finite by what we proved above. Therefore the derived subgroup of 
    $w(G)=\prod_{p \in \mathcal P} w(P)$  is finite,  as desired. 
\end{proof}


\section{Local finiteness of $w(G)$ }\label{sec:locfin}

The goal of the present section is to show that if the hypotheses of Theorem \ref{main} hold and all $w$-values have finite order, then $w(G)$ is locally finite. 
 There is a long-standing  conjecture stating that each torsion profinite group has finite exponent (cf. Hewitt and Ross \cite{HR}). 
 The conjecture can be easily proved for soluble groups (cf.   \cite[Lemma 4.3.7]{ribes-zal}). In  \cite{DMS-2015} this was extended as follows.

\begin{proposition}\cite[Theorem 3]{DMS-2015}\label{2015} 
 Let $w$ be a multilinear commutator word and $G$ a soluble-by-finite profinite group in which all
$w$-values have finite order. Then $w(G)$ is locally finite and has finite exponent.
\end{proposition}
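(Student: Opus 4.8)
The plan is to reduce everything to a bound on the exponent of $w(G)$: a profinite group of finite exponent is periodic, hence locally finite by Zelmanov's theorem \cite{z:periodic}, so a bound on $\exp w(G)$ yields the whole statement. I will use repeatedly the elementary fact that if $N$ is a normal subgroup of a group $H$ and both $\exp N$ and $\exp(H/N)$ are finite then so is $\exp H$, together with the standard fact that a torsion abelian profinite group has finite exponent (a special case of \cite[Lemma 4.3.7]{ribes-zal}). Since $w(G)$ is soluble-by-finite it has a closed normal prosoluble subgroup of finite index; combining this with Lemma \ref{concise} and the observation that a multilinear commutator word takes the value $1$ as soon as one of its arguments is $1$ (so that the relevant $w$-values already lie in that prosoluble subgroup), one reduces the problem to the case where $G$ itself is prosoluble of finite derived length.

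Next a Baire category argument produces a coset identity. For each positive integer $j$ put $S_j=\{(g_1,\dots,g_n)\in G^n:w(g_1,\dots,g_n)^j=1\}$. Each $S_j$ is closed, being the preimage of $\{1\}$ under the continuous map $(g_1,\dots,g_n)\mapsto w(g_1,\dots,g_n)^j$, and their union is all of $G^n$ because every $w$-value has finite order. By the Baire category theorem some $S_m$ has nonempty interior, so there are elements $y_1,\dots,y_n\in G$ and an open normal subgroup $Z$ of $G$ with $w(y_1Z,\dots,y_nZ)^m=1$.

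I would then argue by induction on the derived length $d$ of $G$. If $d\le 1$ then $G$ is abelian and the conclusion is immediate: for a proper commutator word $w(G)=1$, while for $w=x$ the group $w(G)=G$ is a torsion abelian profinite group, hence of finite exponent. Assume $d\ge 2$ and set $A=G^{(d-1)}$, a closed abelian normal subgroup of $G$. The quotient $G/A$ has derived length $d-1$ and all its $w$-values are still of finite order, so by induction $w(G/A)=w(G)A/A$ has finite exponent $e$; thus $w(G)^e\le A$, and it suffices to bound $\exp\bigl(w(G)\cap A\bigr)$. As $w(G)\cap A$ is a closed subgroup of the abelian profinite group $A$, it is enough to show that it is periodic, and this is where the coset identity is used: working in $G$ modulo appropriate subgroups, one applies Lemmas \ref{2.2-bis} and \ref{M2} — which let one replace arguments of $w$ by coset representatives modulo an open normal subgroup — to transport the relation $w(y_1Z,\dots,y_nZ)^m=1$ into the abelian group $A$, where abelianness linearises the resulting relations and forces the elements of $w(G)\cap A$ to be annihilated by a fixed power. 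Together with the two exponent bounds this completes the induction, and hence the proof.

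The routine parts of this plan are the Zelmanov reduction, the Baire step, the exponent bookkeeping and the abelian base case. The real obstacle is the last step of the induction: converting the coset identity $w(y_1Z,\dots,y_nZ)^m=1$ into an honest bound on $\exp(w(G)\cap A)$ — equivalently, upgrading a coset identity for a prosoluble group to a genuine law on an open subgroup of the verbal subgroup. It is exactly here that the commutator combinatorics of Section \ref{sec:comb} carry the weight, and that is the step I expect to occupy the bulk of the argument.
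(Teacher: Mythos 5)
First, a point of order: the paper does not prove Proposition \ref{2015} at all --- it is imported verbatim from \cite[Theorem 3]{DMS-2015}, and the authors explicitly warn that the result ``does not follow from Lemma \ref{lem:4.2} and its proof is significantly more complicated.'' So there is no in-paper argument to compare against, and your proposal has to stand on its own. Its overall architecture (reduce everything to a finite-exponent statement, Baire category to extract a coset identity, induction on derived length with $A=G^{(d-1)}$) has the right shape and broadly matches the strategy of \cite{DMS-2015}.

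However, the decisive step --- deducing $\exp\bigl(w(G)\cap A\bigr)<\infty$ from the coset identity $w(y_1Z,\dots,y_nZ)^m=1$ --- is precisely the substance of the cited theorem, and your proposal replaces it with a gesture that, as written, does not work. Two concrete problems. (i) Lemmas \ref{2.2-bis} and \ref{M2} cannot ``transport'' this relation: their hypotheses are that certain verbal subgroups $w_J(\cdot)$ are \emph{trivial}, whereas here you only know that each element of $\X_w(y_1Z,\dots,y_nZ)$ has order dividing $m$; a family of relations $v^m=1$ is not the vanishing of a subgroup, so those lemmas simply do not apply. The tools that do manipulate coset decompositions of $w$-values (Lemmas \ref{2.1-conjugates}, \ref{comb1}, \ref{comb2}) express a $w$-value as a product of boundedly many conjugates of auxiliary elements, and one must then bound the order of such a product in a soluble group (in the spirit of Lemma \ref{sol1}); carrying this through the induction is the actual work, not a routine ``linearisation by abelianness.'' (ii) Even granting that every $w$-value lying in $Z$ or in $A$ has order dividing some fixed $m'$, this yields periodicity only of the subgroup generated by those $w$-values; the group $w(G)\cap A$ is a closed subgroup of the verbal subgroup and is in general \emph{not} generated by $w$-values, so your appeal to ``torsion abelian profinite $\Rightarrow$ finite exponent'' has nothing to bite on. A related soft spot is the initial reduction: the $w^*$-residual of the open soluble normal subgroup furnished by Lemma \ref{concise} is generated by $w$-values having only one entry in that subgroup; it is contained in it but is not its verbal subgroup, so ``assume $G$ itself is prosoluble'' also needs an argument. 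In short, the skeleton is plausible, but the theorem's content lives in the step you defer, so this is a genuine gap rather than a proof.
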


We remark that the above result does not follow from Lemma \ref{lem:4.2} and its proof is significantly more complicated.

Given a word $w$ and a subgroup $P$ of a profinite group $G$,  we denote by $W(P)$ the  closed subgroup generated by all elements
of $P$ that are conjugate in $G$ to elements of $P_w$: 
$$W(P)= \langle {P_{w}}^G\cap P\rangle.$$
Let $\Y_w$  be the class of
all profinite groups $G$ in which all $w$-values have finite order and the subgroup  $W(P)$
 is periodic for any Sylow subgroup $P$ of $G$.
 
 The following theorem   was implicitly established in \cite{KS}. We will now reproduce the proof. 

\begin{theorem}\label{prop-KS}
Let $w$ be a multilinear commutator word and let $G$ be a profinite group in the class $\Y_w$. Then $w(G)$ is locally finite.
\end{theorem}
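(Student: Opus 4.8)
The plan is to reduce the statement to the soluble-by-finite case handled by Proposition \ref{2015}, using Wilson's structure theorem for compact torsion groups to control the non-soluble part, together with the hypothesis that $W(P)$ is periodic for every Sylow subgroup $P$. First I would observe that it suffices to prove that $w(G)$ is periodic: once we know $w(G)$ is a torsion profinite group, local finiteness follows from Zelmanov's theorem on periodic profinite groups (quoted in the introduction). Moreover, $w(G)$ is closed and normal, and all $w$-values of $w(G)$ have finite order, so replacing $G$ by $w(G)$ we may as well try to show that $G$ itself is periodic under the standing hypotheses.

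Next I would bring in Wilson's theorem: a compact (profinite) torsion group has a finite chain of closed characteristic subgroups in which each factor is either a pro-$p$ group for some prime $p$ or a Cartesian product of finite simple groups. The difficulty is that a priori $G$ need not be torsion, so this cannot be applied to $G$ directly; instead the strategy is to work inside the finite continuous quotients $G/N$ and show the orders of $w$-values there are bounded, or—more in the spirit of \cite{KS}—to analyze the Sylow subgroups $P$ of $G$ one prime at a time. For a fixed Sylow pro-$p$ subgroup $P$, the hypothesis gives that $W(P)=\langle P_w^{\,G}\cap P\rangle$ is periodic, hence (being a pro-$p$ torsion group) locally finite by Zelmanov, and in fact of finite exponent; this is the key input that lets one bound the $p$-part of the order of $w$-values. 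The plan is then to combine these local bounds across all primes, using Proposition \ref{focal} to see that $w(G)\cap P$ is generated by powers of $w$-values and hence lies in $W(P)$, to conclude that $w(G)$ has finite exponent on each Sylow subgroup in a uniform way.

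With the torsion property of $w(G)$ in hand, I would apply Wilson's structure theorem to $w(G)$: it has a finite characteristic series with factors that are pro-$p$ or Cartesian products of finite simple groups. The pro-$p$ factors are handled by Proposition \ref{prop:abstract} (the abstract subgroup generated by $w$-values is locally finite), so they contribute locally finite verbal subgroups. For a factor $K$ that is a Cartesian product of finite simple groups, the argument sketched in the introduction applies: by Ore's conjecture \cite{lost} every element of a nonabelian finite simple factor is a $w$-value, so if infinitely many factors are nonabelian then $K$ contains $w$-values of unbounded order—contradicting that $w(G)$ is torsion of bounded exponent on Sylow subgroups—so only finitely many nonabelian factors occur and $K$ is (finite simple)$^{\mathrm{fin}}$-by-(product of abelian simple groups), again giving $w(K)$ locally finite. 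Assembling the finitely many factors of the series, $w(G)$ is soluble-by-finite (or at least (locally finite)-by-(soluble-by-finite)), and Proposition \ref{2015} finishes the job.

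The main obstacle I anticipate is the very first reduction: establishing that $w(G)$ is torsion, i.e. converting the hypothesis ``$W(P)$ periodic for every Sylow $P$'' into a genuine bound on the orders of $w$-values in finite quotients of $G$. This requires carefully relating $w(G)\cap P$ to $W(P)$ via the focal-subgroup result (Proposition \ref{focal}) and a compactness/Baire-category argument to turn the pointwise finiteness of orders into a uniform exponent on each Sylow subgroup, and then patching the primes together; this is precisely the technical heart of \cite{KS} that the theorem is repackaging.
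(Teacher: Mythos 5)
Your proposal does not establish the theorem, and the missing step is precisely the one that carries all the content. You reduce the problem to showing that $w(G)$ is torsion (correct in principle, via Zelmanov), and you propose to get torsion by bounding the exponent of each Sylow subgroup of $w(G)$ and then ``patching the primes together.'' That deduction is invalid: in a profinite group that is not already known to be pronilpotent, an element is not controlled by its images in (or intersections with) Sylow subgroups. For instance, in $\prod_p C_p$ every Sylow subgroup has exponent $p$, yet the group contains elements of infinite order. So uniform exponent bounds on the $W(P)$'s, even if obtained, do not by themselves make $w(G)$ periodic. You acknowledge this yourself by calling the patching step ``the technical heart of \cite{KS} that the theorem is repackaging'' --- but that is exactly the part a proof must supply, and no mechanism for it is given. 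A secondary issue: your third paragraph is redundant, since once $w(G)$ is known to be a torsion profinite group, Zelmanov's theorem already yields local finiteness and no further analysis via Wilson's structure theorem is needed; and Proposition \ref{focal} concerns finite groups and does not show that $w(G)\cap P$ lies in $W(P)=\langle P_w^G\cap P\rangle$, whose generators are conjugates of $w$-values landing in $P$, not powers of arbitrary $w$-values.

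For comparison, the paper's proof never tries to show $w(G)$ is torsion first. It applies \cite[Theorem 1.5]{KS} with $p=2$ (legitimate by the Feit--Thompson theorem) to obtain a finite series of closed characteristic subgroups of $G$ whose factors are either prosoluble or Cartesian products of nonabelian finite simple groups; Jones's theorem on varieties generated by infinite families of simple groups cuts each semisimple factor down to a product of isomorphic simple groups (else a $w$-value of infinite order appears). It then inducts on the length of this series: $w(G_1)$ is locally finite by induction, one passes to $G/w(G_1)$, and the two cases for the top factor are handled by \cite[Lemma 5.6]{KS} (semisimple case) and by \cite[Proposition 5.12]{KS} together with \cite[Lemma 5.7]{KS} (prosoluble case, via a finite series with pronilpotent quotients). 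These are the results where the hypothesis $G\in\Y_w$ actually enters, and your outline does not reproduce or replace them.
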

\begin{proof}
Recall that finite groups of odd order are soluble by the
Feit-Thompson theorem \cite{FT}. Combining this with \cite[Theorem 1.5]{KS}  (applied with $p = 2$),
we deduce that $G$ has a finite series of closed characteristic subgroups
\begin{equation}\label{5.1} 
G = G_0 \ge  G_1 \ge \cdots \ge G_s = 1 
\end{equation}
in which each factor either is prosoluble or is isomorphic to a Cartesian product
of nonabelian finite simple groups. There cannot be infinitely many nonisomorphic
nonabelian finite simple groups in a factor of the second kind, since this would give a
$w$-value of infinite order. Indeed, by a result of Jones \cite{J}, 
 any infinite family of finite
simple groups generates the variety of all groups; therefore, the orders of $w$-values
cannot be bounded on such an infinite family. Thus, we can assume in addition that
each nonprosoluble factor in (\ref{5.1}) is isomorphic to a Cartesian product of isomorphic
nonabelian finite simple groups. We use induction on $s$.
 If $s = 0$, then $G=1$ and the result follows. 
Let $s \ge 1$. By induction, $w(G_1)$ is locally
finite. Passing to the quotient $G/w(G_1)$, we can assume that $G_1$ is soluble. If $G/G_1$ is
isomorphic to a Cartesian product of isomorphic nonabelian finite simple groups, then
$G/G_1$ is locally finite and the result follows from  \cite[Lemma 5.6]{KS}. 
If $G/G_1$ is prosoluble, then so is $G$, and then by   \cite[Proposition 5.12]{KS} $G$ has a 
series of finite length with pronilpotent quotients. In
this case, $w(G)$ is locally finite by  \cite[Lemma 5.7]{KS}, as required.
\end{proof}

\begin{proposition}\label{locfin}
Let $w$ be a multilinear commutator word and let $G$ be a profinite group  with restricted centralizers of $w$-values. 
Assume that every $w$-value has finite order.
 Then $w(G)$ is locally finite.
\end{proposition}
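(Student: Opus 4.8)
The plan is to show that $G$ belongs to the class $\Y_w$, for then Theorem \ref{prop-KS} gives the conclusion at once. Since all $w$-values of $G$ are already of finite order, what must be checked is that $W(P)$ is periodic for every Sylow subgroup $P$ of $G$. So I would fix a prime $p$, a Sylow $p$-subgroup $P$, and set $W=W(P)=\langle {P_w}^{G}\cap P\rangle$. Then $W$ is a closed subgroup of $P$, hence a pro-$p$ group, and it inherits the standing hypotheses from $G$: every $w$-value of $W$ is a $w$-value of $G$, so has finite order, and for such a value $v$ we have $C_W(v)=C_G(v)\cap W$, which is finite or open in $W$ according as $C_G(v)$ is finite or open in $G$. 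Thus $W$ is a pronilpotent group with restricted centralizers of $w$-values in which every $w$-value has finite order.

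Now Theorem \ref{pro-p} applies to $W$ and shows that the derived subgroup of $w(W)$ is finite; in particular $w(W)$ is finite-by-abelian, and (its finite derived subgroup being a finite $p$-group) it is soluble. If $n$ denotes the number of variables of $w$, then every $\delta_n$-value is a $w$-value by Lemma \ref{lem:delta_k}, so $W^{(n)}=\delta_n(W)\le w(W)$; hence $W^{(n)}$ is soluble and therefore $W$ itself is soluble. Being a soluble profinite group in which all $w$-values have finite order, $W$ meets the hypotheses of Proposition \ref{2015}, so $w(W)$ is locally finite and of finite exponent; a fortiori the same is true of the closed subgroup $W^{(n)}\le w(W)$.

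It remains to upgrade this to periodicity of $W$ itself, and I expect this to be the main obstacle. Since $W^{(n)}$ is periodic and an extension of a periodic profinite group by a periodic one is periodic, it suffices to treat the soluble quotient $W/W^{(n)}$, which has bounded derived length and is topologically generated by the images of finite-order $w$-values of $G$. The difficulty is that local finiteness of a verbal subgroup does not by itself make the whole group periodic — the abelian group $\prod_{n}C_{p^{n}}$ is topologically generated by torsion elements yet is not torsion — so one must genuinely use that $W$ is generated by $w$-values, the conjugates of $w$-values of $P$ lying in $P$, all of finite order. I would analyse $W/W^{(n)}$ layer by layer along its derived series, using Proposition \ref{2015} together with the combinatorial lemmas of Section \ref{sec:comb} to force each successive abelian layer to have finite exponent, so that $W$ itself has finite exponent; then Zelmanov's theorem on periodic profinite groups shows $W$ is locally finite, in particular periodic. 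With $W(P)$ periodic for every Sylow subgroup $P$ we obtain $G\in\Y_w$, and Theorem \ref{prop-KS} completes the proof.
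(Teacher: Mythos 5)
Your overall strategy coincides with the paper's: verify membership in the class $\Y$ for a suitable word and invoke Theorem \ref{prop-KS}. The first half of your argument is sound --- Theorem \ref{pro-p} applied to a Sylow subgroup $P$ (the paper applies it to $P$ itself rather than to $W(P)$, but the inheritance of restricted centralizers by closed subgroups works as you say) shows $P$ is soluble, and Proposition \ref{2015} then gives $P^{(k)}$, and indeed $w(P)$, finite exponent. But the step you flag as ``the main obstacle'' is a genuine gap, and your proposed resolution would not close it. To get $W(P)=\langle P_w^G\cap P\rangle$ periodic you ultimately need Lemma \ref{sol1}, which requires the generating set to consist of elements of \emph{bounded} order (this part you do get, from the finite exponent of $w(P)$) \emph{and} with a \emph{bounded} number of conjugates. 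Bounded order of generators alone does not force a soluble group to be periodic (the infinite dihedral group is generated by two involutions), so a ``layer by layer'' analysis of $W/W^{(n)}$ cannot succeed without the bounded-conjugacy input, and nothing in your sketch produces it.

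The paper's mechanism for obtaining bounded conjugacy is exactly where the word gets changed: one first shows (via Lemma \ref{114}) that if $P$ is infinite then no $\delta_k$-value in $P$ can have finite centralizer, so all of $P_{\delta_k}$ consists of FC-elements; a Baire category argument plus Lemma \ref{comb1} then bounds the number of conjugates of elements of $T_{\delta_k}$ for some open $T\le P$; and Lemma \ref{comb2} extends this from $T_{\delta_k}$ to all of $P_{\delta_{2k}}$ \emph{only because} the coset representatives of $T\cap P^{(k)}$ in $P^{(k)}$ can be chosen to be products of $\delta_k$-values, hence FC-elements with boundedly many conjugates. For the word $w$ itself the coset representatives of $T$ in $P$ are arbitrary elements of $P$, with no FC property available, so there is no way to pass from $T_w$ to $P_w$; this is precisely why the paper establishes $G\in\Y_u$ for $u=\delta_{2k}$ rather than $G\in\Y_w$, concludes that $G^{(2k)}=u(G)$ is locally finite, and then finishes by passing to the soluble quotient $G/G^{(2k)}$ and applying Proposition \ref{2015} once more. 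Your plan to verify $G\in\Y_w$ directly founders on this point.
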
 
\begin{proof} 
By Lemma \ref{lem:delta_k} there exists an integer $k$  such that each $\delta_k$-value is a $w$-value.  Set $u=\delta_{2k}$. 
 Let us show that  $G\in \Y_u$, that is,  
$$U(P)=\langle P_{u}^G\cap P\rangle$$ 
 is periodic for every Sylow subgroup of $P$ of $G$.

Let $P$ be a Sylow subgroup of $G$. It follows from Theorem \ref{pro-p} that $w(P)'$ is a finite $p$-group, so 
 $w(P)$ is soluble. 
  In view of Lemma \ref{lem:delta_k}  we have 
  $P^{(k)}\le w(P)$ and so  $P$  is soluble.
 By Proposition \ref{2015}, $P^{(k)}$ is locally finite and has finite exponent. In
particular $P^{(k)}$  is locally nilpotent.

If $P$ is finite then also $U(P)$ is finite so we can assume that $P$ is infinite. 

If some element  $x\in P_{\delta_k}$ has finite centralizer we get a contradiction,
because on the one hand $x^P$ is infinite, on the other hand $x^P$ is contained in $P^{(k)}$, which is finite by Lemma \ref{114}.
Thus we can assume that the centralizer of  each element in $P_{\delta_k}$ is infinite.  As $G$ has  restricted centralizers of $w$-values and every
$\delta_k$-value is also a $w$-value, it follows that each element in  $P_{\delta_k}$  has centralizer of finite index in $G$. 
Consider the sets
$$C_j=\{(\z_1,\dots,\z_{2^k}) \mid  \z_i\in P{\textrm{ and }} |\delta_k(\z_1,\dots,\z_{2^k})^G|\le j\}.$$
Note that each set  $C_j$ is closed. Moreover their union is the whole Cartesian product of $2^k$ copies of $P$.
By the Baire category theorem at least one of the sets $C_j$ has nonempty interior. 
Hence, there exist a natural number $m$, some elements
$a_i\in P$ and an  open normal subgroup
$T$ of $P$ such that 
$$\X_{\delta_k}(a_1T,\dots,a_{2^k}T)\subseteq C_{m}.$$
We deduce from Lemma \ref{comb1} that there exists a positive integer $m_1$ such that each element in $T_{\delta_k}$ 
has at most $m_1$ conjugates.
Let $T_0=T\cap P^{(k)}$. As $P^{(k)}$ is topologically 
generated by $P_{\delta_k}$, we can choose a right transversal   $b_1,\dots, b_r$ of $T_0$ in $P^{(k)}$ 
consisting of finite products of elements in $P_{\delta_k}$. Of course
 $b_1,\dots, b_r$ are FC-elements and thus there exists a positive integer $m_2$ such that each 
 $b_i$ has at most $m_2$ conjugates.
Let $x\in P_{u}$. 
We have $$x=\delta_k(c_1,\dots,c_{2^k}),$$  
where $c_i\in P_{\delta_k}$ for $i=1,\dots,2^k$. Now each $c_i$ is of the form $c_i=g_ih_i$ where
$g_i\in\{b_1,\dots,b_r\}$ and $h_i\in T_0$.

It follows from Lemma \ref{comb2} that $x=ah$ where $a$ is the product of at most $t_{2^k}$ conjugates of elements in
$\{b_1^{\pm 1},\dots,b_r^{\pm 1}\}$ and $h\in T_{\delta_k}$.

As each $b_i$ has at most $m_2$ conjugates and $h$ has at most $m_1$ conjugates it follows that $x$ has at most $m_3$ conjugates for some positive integer $m_3$ 
which does not depend on $x$.
So each $x\in P_{u}$ has order dividing $e$, where $e$ is the exponent of 
 $P^{(k)}$, and has at most $m_3$ conjugates. 

Recall that $U(P)=\langle P_{u}^G\cap P\rangle$. It follows from Lemma \ref{sol1} that $U(P)$  has 
finite exponent.  This proves that $G\in\Y_u$.

We deduce from Theorem \ref{prop-KS} that $G^{(2k)}$ is locally finite.  
Thus we can pass to the quotient group $G/G^{(2k)}$ and assume that
$G^{(2k)}=1$.  Now the result follows from Proposition \ref{2015}.
\end{proof}

\section{Proof of Theorem \ref{main}}\label{sec:final}

We recall that the Hirsch-Plotkin radical of an (abstract) group is defined  as    the maximal normal locally nilpotent subgroup.
   In a profinite group the Hirsch-Plotkin radical  need not be closed. 
  However, in the particular case where 
   the profinite group  is locally finite, the Hirsch-Plotkin radical is closed. 
  Indeed  the closure  of an abstract  locally nilpotent subgroup 
   is pronilpotent in any profinite group, and  so  it is
    locally nilpotent if the group  is  locally finite.  
   
   An important result about profinite torsion groups is the following theorem due to J. S. Wilson. 
   \begin{theorem}\cite[Theorem 1]{Will:torsion}\label{thm:wil}
   Let $G$ be a compact Hausdorff torsion group. Then $G$ has
a finite series
   \[ 1=G_0 \le G_1 \le \dots \le G_s \le G_{s+1}=G \] 
of closed characteristic subgroups, in which each factor $G_{i+1}/G_{i}$ either is a pro-$p$ group for some prime $p$ or 
is isomorphic (as a topological
group) to a Cartesian product of 
 finite simple groups. 
   \end{theorem}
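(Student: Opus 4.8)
The plan is to prove the statement in two main stages: first reduce to the case of a profinite group, and then construct and control a finite characteristic series by a socle/generalized-Fitting analysis. For the reduction, let $G_0$ be the connected component of the identity. It is a closed normal subgroup with $G/G_0$ profinite, so it suffices to show $G_0=1$. A nontrivial compact connected group surjects onto a nontrivial compact connected Lie group, which contains a nontrivial torus, and a nontrivial torus has elements of infinite order; hence a nontrivial compact connected group is never a torsion group. Since $G$ is periodic, $G_0=1$, so $G$ is totally disconnected and compact, i.e. profinite. From here on I would assume $G$ profinite and periodic.

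The second step is to identify the two kinds of factor. The natural building blocks are the minimal closed normal subgroups of $G$: each is characteristically simple, and a characteristically simple profinite group is either an elementary abelian pro-$p$ group or a Cartesian product of copies of a fixed nonabelian finite simple group. Taking $\mathrm{Soc}(G)$ to be the closed subgroup generated by all minimal closed normal subgroups and iterating yields an ascending chain of closed characteristic subgroups whose successive factors are Cartesian products of (possibly several isomorphism types of) finite simple groups. Organizing these layers via the generalized Fitting decomposition — separating the pronilpotent radical, whose pieces amalgamate into pro-$p$ factors and Cartesian products of cyclic groups $C_p$, from the layer generated by the nonabelian components, which gives Cartesian-product-of-nonabelian-simple factors — produces factors of exactly the two types demanded in the statement.

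The heart of the argument, and the step I expect to be the main obstacle, is showing that this series reaches $G$ after \emph{finitely many} steps rather than requiring a transfinite ascent; closed subgroups of a profinite group satisfy no chain condition a priori, so finiteness is genuinely the crux. Here I would invoke the classification of finite simple groups, in the form of Schreier's conjecture (the outer automorphism group of every finite simple group is soluble) together with the structure of the generalized Fitting subgroup $F^*$. For a nonabelian semisimple layer $L$ the conjugation action makes $G/(L\,C_G(L))$ have prosoluble ``outer'' part, which lets one bound how often a nonabelian semisimple layer and a pronilpotent layer can alternate. I would then pass through the finite continuous quotients of $G$: in each finite quotient the corresponding series has finite length, and the classification-based bounds make this length uniform across all quotients; a compactness (inverse-limit) argument transfers the uniform bound back to $G$, producing a series of finite length. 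The delicate point is precisely this uniform length bound, and it is the combination of compactness with the solubility of outer automorphism groups of finite simple groups that forces termination.
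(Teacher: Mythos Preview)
The paper does not prove this theorem; it is quoted verbatim as \cite[Theorem~1]{Will:torsion} and used as a black box in the proof of Theorem~\ref{main}. There is therefore no ``paper's own proof'' to compare your proposal against.

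As for your sketch itself: the reduction to the profinite case via the connected component is correct and standard (a connected compact Lie group has a torus, hence non-torsion elements). Your outline of the main construction --- building the series from socle/generalized-Fitting layers and invoking the classification through Schreier's conjecture to control alternation between semisimple and pronilpotent layers --- is broadly in the spirit of Wilson's argument. However, your treatment of the key finiteness step remains a plan rather than a proof: you correctly identify it as the crux, but the passage ``the classification-based bounds make this length uniform across all quotients; a compactness argument transfers the uniform bound back to $G$'' hides the real work. In Wilson's paper the finiteness comes from a specific structural argument showing that above the layer generated by components the quotient is prosoluble (this is where Schreier enters), and that a prosoluble torsion profinite group has a finite characteristic series with pro-$p$ factors; the latter is itself a nontrivial result (essentially Hall--Higman/Fitting-height considerations combined with Herfort's theorem that only finitely many primes can occur). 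Your inverse-limit idea would need to be made precise enough to recover these bounds, and as written it does not. So the proposal is a reasonable roadmap but not yet a proof, and in any case the present paper simply cites the result.
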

   
   In particular, a profinite locally soluble torsion group has a finite series of characteristic subgroups 
   in which each factor  is a pro-$p$ group for some prime $p$.

\begin{proof}[Proof of Theorem \ref{main}]
 Recall that $w$ is a multilinear commutator word and $G$ a profinite group with  restricted centralizers of $w$-values. We want to prove that 
  $w(G)$ is abelian-by-finite. 
 
 If $G$ has a $w$-value of infinite order, then by Corollary \ref{infinite} the subgroup $w(G)$ is abelian-by-finite.
  So we can assume that every $w$-value has finite order.  
 It follows from Proposition \ref{locfin} that $w(G)$ is locally finite. 
 
 By Theorem \ref{thm:wil}, $w(G)$  has a finite series of characteristic subgroups
   \[ 1=A_0 \le A_1 \le \dots \le A_s \le A_{s+1}=w(G) \] 
   in which each factor 
 either is a pro-$p$ group for some prime $p$ or is isomorphic 
  to a Cartesian product of 
 finite simple groups. Let $A/B$ be a factor in the series which is isomorphic to a Cartesian product of 
 finite simple groups. 
 Recall that the famous Ore's conjecture, stating that every element of a nonabelian finite simple group is a commutator, was proved 
 in \cite{lost}. It follows that 
  every element of a nonabelian  finite simple group is a $w$-value, 
 therefore every element in $A/B$ is a $w$-value.  We deduce from Lemma \ref{KK}
 that $A/B$ is a profinite  group with restricted centralizers. By Shalev's result \cite{shalev},  $A/B$ is abelian-by-finite and therefore finite.
 
 Since all non-pronilpotent factors in the above series are finite, we derive that $w(G)$ is prosoluble-by-finite. Moreover
 $w(G)$ has an open characteristic subgroup $K$, which in turn has  a finite characteristic series 
$$1= F_0 \le F_1 \le F_2 \le \dots \le F_r \le F_{r+1} = K $$  
 where $F_{i+1}/F_i$ is the  Hirsch-Plotkin radical of $K/F_i$, for every $i$. 
 
 Alternatively, the existence of such a subgroup $K$ could be shown using theorems of Hartley \cite{H} 
 and Dade \cite{Dade}.

 Let $j$ be the maximal index such that all $w$-values contained in $F_j$ are FC-elements.
  If $j=r+1$, then by Corollary \ref{profinite-FC} we conclude that $w(G)$ is finite-by-abelian, hence abelian-by-finite. 
  
  So assume now that $j \le r$. Then  there exists a $w$-value whose centralizer  in $G$ is finite.   
  As $w(G)$ is locally finite,  Lemma \ref{KK} 
  guarantees that  $F_{j+1}/F_j$ has an element with finite centralizer. 
 Thus $F_{j+1}/F_j$ satisfies the hypothesis of Lemma \ref{114}, hence it is finite. Since $F_{j+1}/F_j$ is the  Hirsch-Plotkin radical of $K/F_j$, 
 it contains its  centralizer in $K/F_j$. 
  Taking into account that $F_{j+1}/F_j$ is finite, we conclude that its centralizer in $K$ has finite index. 
  Therefore $F_{j+1} $ has finite index in $K$.  
 We deduce that $F_j$ has finite index in $w(G)$. 
 
 Let $T$ be the $w^*$-residual of $F_j$.  Since every $w$-value in $F_j$ is an FC-element, we can apply Theorem \ref{genN}
 and we obtain that $T'$ is finite.  Hence, $T$ is abelian-by-finite. 
Note that $F_j/T$ is contained in $w^*(G/T)$, hence it centralizes $w(G/T)$. 
 By Lemma \ref{KK} the verbal subgroup $w(G/T)$ has an element with finite centralizer, so we deduce that $F_j/T$ is finite. 
 Thus $T$ is open in $w(G)$ and we conclude that $w(G)$ is abelian-by-finite, as desired. 
\end{proof}

In the sequel, we will use the fact that  an abelian-by-finite group contains a characteristic abelian subgroup of finite index
(see  \cite[Ch. 12, Lemma 1.2]{Passman} or  \cite[Lemma 21.1.4]{Ka}).

\begin{proof}[Proof of Corollary \ref{openT}]
Recall that $w$ is a multilinear commutator word and $G$ a profinite group 
 in which  centralizers of $w$-values 
 are either finite or open. 
 If follows from Theorem \ref{main} that $w(G)$ is abelian-by-finite. In particular
$w(G)$ has an open characteristic abelian subgroup $N$. As $w(G)/N$ is finite, there exists an open normal subgroup $T$ of $G$ 
containing $N$, such that 
$T/N$ intersects $w(G)/N$ trivially. Since $w(T) \le T \cap w(G) \le N$, we conclude that $w(T)$ is abelian, as desired. The solubility of $T$ is immediate from 
Lemma \ref{lem:delta_k}.  
\end{proof}

\begin{proof}[Proof of Corollary \ref{profinite-finite}]
Recall that $w$ is a multilinear 
commutator word and $G$  a profinite group in which every $w$-value has finite centralizer.  Assume that $w(G) \neq 1$.
If follows from Theorem \ref{main} that $w(G)$ is abelian-by-finite. In particular, 
$w(G)$ has an open characteristic abelian subgroup $N$. If $N$ contains a nontrivial $w$-value, then $N$ is finite, by assumption. Therefore we can assume that $N \cap G_w=1$.  It follows from the remark following Lemma \ref{ts}   that $N$ is  contained in  $w^*(G)$. Since the marginal subgroup centralizes $w(G)$, we deduce that $N$ is finite. 
This proves that $w(G)$ is finite. Hence, $C_G(w(G))$ has finite index in $G$. We see that $C_G(w(G))$ is both finite and of finite index, which proves that $G$ is finite. 
\end{proof}

As a final remark, we point out that in \cite{shalev} Shalev actually proved that if $G$ is a profinite group with restricted  centralizers then $\Delta (G)$ has finite index in $G$ and finite commutator subgroup. Our proof of Theorem  \ref{main} implies that if  $w$ is a multilinear commutator word and $G$ a profinite group   with restricted centralizers of $w$-values, then 
 the closed subgroup generated by $G_w \cap \Delta(G)$ has finite index in $w(G)$ and finite commutator subgroup. 

\section*{Acknowledgements}
The third author was partially  supported by FAPDF and CNPq.

\label{lastpage}
\end{document}